\documentclass[11pt,leqno]{article}
\usepackage{amsmath, amscd, amsthm, amssymb, graphics, xypic, mathrsfs, setspace, fancyhdr, bm, pdfsync, enumitem, mathptmx}
\usepackage[usenames, dvipsnames, svgnames, table]{xcolor}
\usepackage[letterpaper,top=1.25in, bottom=1.25in, left=1.25in, right=1.25in]{geometry}
\usepackage[colorlinks=true,pagebackref=true]{hyperref}
\usepackage[textsize=scriptsize]{todonotes}
\hypersetup{backref,hidelinks}

\usepackage{etoolbox}
\newtoggle{final}
\toggletrue{final}

\iftoggle{final} {
\newcommand{\aravind}[1]{} 
\newcommand{\tom}[1]{} 
\newcommand{\mike}[1]{} 
\newcommand{\NB}[1]{}
\newcommand{\TODO}[1]{}
\renewcommand{\todo}[1]{}
}{ 
\newcommand{\aravind}[1]{\textcolor{red}{#1}} 
\newcommand{\tom}[1]{\todo[color=blue!40]{#1}} 
\newcommand{\mike}[1]{\textcolor{green}{#1}} 
\newcommand{\NB}[1]{\todo[color=gray!40]{#1}}
\newcommand{\TODO}[1]{\todo[color=red]{#1}}
}

\newcommand{\colim}{\operatorname{colim}}

\newcommand{\Spec}{\operatorname{Spec}}

\newcommand{\isomto}{{\stackrel{\sim}{\;\longrightarrow\;}}}

\newcommand{\sma}{{\scriptstyle{\wedge}\,}}

\renewcommand{\hom}{\operatorname{Hom}}
\newcommand{\Map}{\operatorname{Map}}
\newcommand{\iMap}{\underline{\Map}}

\newcommand{\Id}{\mathrm{id}}

\newcommand{\real}{{\mathbb R}}

\newcommand{\cplx}{{\mathbb C}}
\newcommand{\C}{{\mathrm C}}
\newcommand{\Q}{{\mathbb Q}}
\newcommand{\Z}{{\mathbb Z}}

\renewcommand{\P}{{\mathrm P}}

\newcommand{\aone}{{\mathbb A}^1}
\newcommand{\pone}{{\mathbb P}^1}

\renewcommand{\1}{{\rm 1\hspace*{-0.4ex}%
\rule{0.1ex}{1.52ex}\hspace*{0.2ex}}}

\newcommand{\gm}[1]{{{\mathbb G}_{m}^{#1}}}
\newcommand{\Gm}{{\gm{}}}
\renewcommand{\L}{{\mathrm L}}

\newcommand{\SH}{{\mathrm{SH}}}

\newcommand{\Sm}{\mathrm{Sm}}

\newcommand{\Spc}{\mathrm{Spc}}

\newcommand{\Pic}{\operatorname{Pic}}

\newcommand{\Sym}{{\operatorname{Sym}}}

\newcommand{\Sing}{\operatorname{Sing}}

\renewcommand{\setminus}{\smallsetminus}

\newcommand{\fib}{\mathrm{fib}}

\newcommand{\Addresses}{{
\bigskip
\footnotesize

A.~Asok, Department of Mathematics, University of Southern California, 3620 S.~Vermont Ave., Los Angeles, CA 90089-2532, United States; E-mail address: \url{asok@usc.edu}
\medskip

T.~Bachmann, Department of Mathematics, JGU Mainz, Staudingerweg 9, 55128 Mainz, Germany; E-mail address: \url{tom.bachmann@zoho.com}
\medskip

M.J.~Hopkins, Department of Mathematics, Harvard University, One Oxford Street, Cambridge, MA 02138, United States \textit{E-mail address:} \url{mjh@math.harvard.edu}
}}

\newcounter{intro}
\theoremstyle{plain}
\newtheorem{theorem}{Theorem}[subsection]

\newtheorem{cor}[theorem]{Corollary}
\newtheorem{proposition}[theorem]{Proposition}
\newtheorem*{claim*}{Claim} 

\newtheorem{question}[theorem]{Question}

\newtheorem*{thm*}{Theorem}
\newtheorem*{problem*}{Problem}
\newtheorem*{question*}{Question}

\newtheorem{thmintro}{Theorem}

\newtheorem{problemintro}[thmintro]{Problem}
\newtheorem{questionintro}[thmintro]{Question}

\theoremstyle{definition}
\newtheorem{defn}[theorem]{Definition}

\theoremstyle{remark}
\newtheorem{rem}[theorem]{Remark}

\newtheorem{ex}[theorem]{Example}

\numberwithin{equation}{subsection}

\begin{document}
\pagestyle{fancy}
\renewcommand{\sectionmark}[1]{\markright{\thesection\ #1}}
\fancyhead{}
\fancyhead[LO,R]{\bfseries\footnotesize\thepage}
\fancyhead[LE]{\bfseries\footnotesize\rightmark}
\fancyhead[RO]{\bfseries\footnotesize\rightmark}
\chead[]{}
\cfoot[]{}
\setlength{\headheight}{1cm}

\author{Aravind Asok \and Tom Bachmann \and Michael J. Hopkins}

\title{{\bf Algebraic vs. analytic: a survey}}
\date{}
\maketitle

\begin{abstract}
	Given a pair of complex algebraic varieties, we can ask: does every analytic map admit an algebraic representative?  Such questions are most interesting when the source and target are non-compact varieties, in which case the problems are closely linked with Hodge-type conjectures.  Concrete avatars of this kind of problem, which we discuss from several points of view, include: which holomorphic vector bundles on an affine variety admit an algebraic structure, and which homotopy classes of maps admit algebraic representatives?  
\end{abstract}

\begin{footnotesize}
\tableofcontents
\end{footnotesize}

\section*{Proemium}
The purpose of this note is deliberately experimental. Mathematical papers ordinarily explain why a theorem is true, and occasionally why a question is natural. Much more rarely do they ask how the question itself became mathematically meaningful. However, the meaning of a mathematical problem is no less historically contingent than that of its solution.  Before a question can be answered, there must exist a mathematical language in which it can even be formulated. That language itself has a history.

Our starting point is the following pair of comparison problems.  To state them, write $S^{n}$ for the ``standard'' $n$-sphere, i.e., the one modelled by the set of real solutions to the equation $\sum_{i=0}^n x_i^2 = 1$; write $\mathrm{S}^{n}_{\cplx}$ for the corresponding nonsingular complex affine variety defined by the same equation.  An elementary computation,  effectively the polar decomposition of complex numbers, shows that the complex manifold attached to $\mathrm{S}^{n}_{\cplx}$ is homotopy equivalent to $S^n$.  Fixing base-points, we may consider the following problem.

\begin{questionintro}
	\label{questionintro:polynomialrep}
	Fix integers $n, m \geq 1$.  Does any element of $\pi_n(S^m)$ admit a representative by a polynomial map of (complex) affine quadrics?  
\end{questionintro}

We also consider the following problem, which has a similar flavor (cf. \cite[Question 3.20]{AsokFasel2022VectorBundles}).

\begin{problemintro}
	\label{problemintro:algebraizability}
Suppose $X$ is a nonsingular complex affine algebraic variety and write $X^{an}$ for $X(\cplx)$ viewed as a complex manifold.  If we write $\mathscr{V}_r(X)$ for the set of rank $r$ algebraic vector bundles on $X$ and $\mathscr{V}_r^{top}(X)$ for the set of isomorphism classes of complex (topological) vector bundles on $X^{an}$, then characterize the image of the natural map:
\[
\mathscr{V}_r(X) \longrightarrow \mathscr{V}_r^{top}(X).
\]    
\end{problemintro}

The principal mathematical contribution of this paper is the following pair of theorems.  Briefly, the first result states that the answer to Question~\ref{questionintro:polynomialrep} is positive.  The second result gives a condition ensuring bijectivity of the comparison map in Problem~\ref{problemintro:algebraizability}.

\begin{thmintro}[A., Bachmann, Hopkins]
	\label{thmintro:polynomialrep}
	For any integers $m,n \geq 1$, every element of $\pi_n(S^m)$ admits a complex polynomial representative.  More precisely, for any integers $n \geq m$ the map
	\[
	\hom_*(\mathrm{S}^{n}_{\cplx},\mathrm{S}^{m}_{\cplx})/\sim_{\aone} \longrightarrow \pi_{n}(S^{m})
	\]
	is surjective; here, the left hand set is the quotient of the set of pointed morphisms of algebraic varieties modulo the equivalence relation generated by algebraic homotopies parameterized by the affine line.\footnote{In fact, the comparison map is usually bijective, and the situations where it fails to be bijective can be described explicitly; see Remark~\ref{rem:polynomialreps}.}  
\end{thmintro}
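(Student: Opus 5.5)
The plan is to pass through motivic homotopy theory and compare three sets: naive $\aone$-homotopy classes $\hom_*(\mathrm{S}^n_\cplx,\mathrm{S}^m_\cplx)/\sim_{\aone}$, genuine motivic homotopy classes $[\mathrm{S}^n_\cplx,\mathrm{S}^m_\cplx]_{\aone,*}$, and the topological group $\pi_n(S^m)$. Complex realization $X\mapsto X(\cplx)$ factors the comparison map through the middle term.

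\textbf{Step 1: motivic spheres.} Recall that the affine quadrics are motivic spheres. For $n=2k-1$ one has $\mathrm{S}^{2k-1}_\cplx\simeq \aone^k\setminus 0\simeq S^{k-1}\wedge\gm{k}$. For $n=2k$ one has $\mathrm{S}^{2k}_\cplx\simeq S^k\wedge \gm{k}$, using the Asok--Doran--Fasel description of the split quadric $Q_{2k}$. Complex realization sends $S^{a}\wedge\gm{b}$ to $S^{a+b}$. So the middle term is a bigraded motivic homotopy group $\pi_{a,b}$ of a motivic sphere $S^{p,q}$.

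\textbf{Step 2: naive equals genuine.} Since $\mathrm{S}^m_\cplx$ is a smooth affine quadric, I would use the known results of Asok--Hoyois--Wendt on affine representability. Those results show that for such targets, and for smooth affine sources, naive $\aone$-homotopy classes agree with genuine motivic classes. Equivalently, $\Sing^{\aone}$ of the quadric is already Nisnevich-local and $\aone$-invariant on affines. This step is essentially a citation. At worst it gives surjectivity of naive classes onto motivic ones, which is all that is needed.

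\textbf{Step 3: realization surjects.} This is the main obstacle: showing that complex realization $\pi_{a,b}(S^{p,q})\to\pi_{a+b}(S^{p+q})$ is surjective. Here $p+q=m$, and $a+b=n\ge m$.

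\emph{Stable range.} My first approach is to use the motivic Adams(--Novikov) spectral sequence over $\cplx$. By work of Dugger--Isaksen, Gheorghe--Isaksen and others, the $\cplx$-motivic stable stems at the right weights map isomorphically onto, or at least onto, the classical stems. The key input is that inverting $\tau$ recovers classical homotopy, and that in the relevant weight range the $\tau$-torsion does not obstruct surjectivity.

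\emph{Unstable range.} To get down to $\mathrm{S}^m_\cplx$ itself, I would use that the motivic sphere already has enough $\gm{}$-suspensions built in, and compare the motivic EHP sequences with the classical EHP sequence. Alternatively, one can induct on $m$ using the Hopf-type fiber sequences, $\aone^k\setminus0\to$ Stiefel-type varieties, and check surjectivity on each piece. The delicate point is choosing the weight $q$ so that the relevant groups are in the ``Chow--Novikov degree zero'' region, where realization is surjective. The cases $m\le 2$ also need special care: they involve $\Gm$-type phenomena and $\pi_n(S^1)=0$, while $\pi_n(S^2)\cong\pi_n(S^3)$ via the Hopf map, which is itself algebraic.

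\textbf{Conclusion.} Composing the three maps gives the surjection claimed in Theorem~\ref{thmintro:polynomialrep}. For $n<m$ the group $\pi_n(S^m)$ is zero, which yields the first sentence of the theorem.
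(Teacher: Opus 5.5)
\textbf{There is a genuine gap, in Step 3.} Your Steps 1 and 2 are sound: they are the identifications $\mathrm{S}^{2k-1}_\cplx\simeq S^{2k-1,k}$ and $\mathrm{S}^{2k}_\cplx\simeq S^{2k,k}$, plus affine representability for quadrics \cite{AHWII,AQuadrics}, which the paper also uses implicitly. Step 3, however, is the entire theorem, and what you give there is a list of tools rather than an argument.

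Your stable input is reasonable. Note that the bidegrees are forced by the quadrics and automatically have Chow degree $s-2w\in\{-1,0,1\}$, so there is no weight to ``choose''. But it concerns completed motivic stable stems, and it says nothing about $\pi_n(S^m)$ outside the $\pone$-stable range of Theorem~\ref{thm:freudenthal}.

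To destabilize, you propose pushing surjectivity through long exact sequences (EHP-type sequences or Hopf/Stiefel fibrations). The four lemma then needs \emph{injectivity} of realization on the neighbouring terms, and this fails in general, because realization kills copies of $\cplx^\times$ (see Remark~\ref{rem:polynomialreps}). For example, $[\pone,\pone]_{\aone,*}\cong\Z\times\cplx^\times\to\pi_2(S^2)=\Z$. Likewise, rationally $[\mathrm{Q}_{4l-2},\L_\Q\mathrm{Q}_{2l}]\cong\cplx^\times\otimes\Q$ (coming from $H^1(\Spec\cplx,\Q(1))$ via the Whitehead square), while $\pi_{4l-2}(S^{2l})$ is finite for $l\geq 2$.

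Two further problems compound this:
\begin{enumerate}[noitemsep,topsep=1pt]
\item The intermediate terms leave the bidegrees you can control. Simplicial suspension fixes the weight, so it leaves the family of quadrics. The fiber of $\pone$-stabilization lies in $O(S^{a,2q})$, with doubled weight.
\item Adams--Novikov methods only see completions, whereas the integral motivic groups contain uncountable divisible pieces. Some gluing of rational and completed information is unavoidable, and your proposal has none.
\end{enumerate}

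\textbf{The missing idea} is to compare \emph{mapping spaces} rather than homotopy groups, and to split the problem arithmetically, as in the proof of Theorem~\ref{thm:vbtrivdet} (cf.\ Remark~\ref{rem:polynomialreps}). One uses an arithmetic fracture square for the target.
\begin{enumerate}[noitemsep,topsep=1pt]
\item \emph{Profinite part.} One shows, in the same spirit as for $BSL_r$, that $\Map_*(\mathrm{Q}_n,\L^{\wedge}\mathrm{Q}_m)\to\Map_*(S^n,\L^{\wedge}S^m)$ is an \emph{equivalence}. This uses the unstable $\mathrm{MGL}$-based cobar (Adams--Novikov) resolution. Its convergence comes from \cite{BachmannEngelmannMattis2025Monadic} together with Theorem~\ref{thm:freudenthal}. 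Its terms are controlled under realization by the Bachmann--Hopkins Wilson space theorem~\ref{thm:wilsonspacehypoth}. The source is handled by the somatic-level input resting on Theorem~\ref{thm:BlochKatoconj}. The same is done after $\L_\Q\L^{\wedge}$.
\item \emph{Rational part.} \cite{AFHLocalization} identifies $\L_\Q\mathrm{Q}_{2l-1}\simeq K(\Q(l),2l-1)$, and $\L_\Q\mathrm{Q}_{2l}$ with the fiber of the squaring map $K(\Q(l),2l)\to K(\Q(2l),4l)$. The rational mapping spaces are therefore computed by the motivic cohomology of $\Spec\cplx$. One checks directly that the rational comparison is surjective on $\pi_0$; the $\cplx^\times\otimes\Q$ classes only obstruct injectivity.
\item \emph{Gluing.} The resulting Cartesian square of mapping spaces identifies the fibers of the integral comparison with those of the rational one, so surjectivity transfers. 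No injectivity in neighbouring degrees is ever needed.
\end{enumerate}

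Without a substitute for this unstable profinite comparison, which is exactly where the Wilson space theorem enters, Step 3, and hence the proof, remains incomplete.
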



\begin{thmintro}[A., Bachmann, Hopkins]
	\label{thmintro:cellularalgebraizability}
If $X$ is a smooth complex affine {\em even cellular} (see \textup{Definition~\ref{defn:evencellular}}) variety, then the map
\[
\mathscr{V}_r(X) \longrightarrow \mathscr{V}_r^{top}(X)
\]	
is bijective.  In particular, if $\widetilde{{\mathbb P}^n}$ is the affine variety obtained by taking the complement of the incidence divisor in ${\mathbb P}^n \times {\mathbb P}^n$ (a.k.a. the standard Jouanolou device of ${\mathbb P}^n$), then every topological vector bundle on $\widetilde{{\mathbb P}^n}$ admits a unique algebraic structure.
\end{thmintro}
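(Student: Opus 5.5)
Since $X$ is smooth affine, the affine representability theorem of Asok--Hoyois--Wendt identifies $\mathscr{V}_r(X)$ with $[X, Gr_r]_{\aone}$, the set of maps in the $\aone$-homotopy category from $X$ to the infinite Grassmannian $Gr_r$. On the topological side, $\mathscr{V}_r^{top}(X) = [X^{an}, BU(r)]$, and complex realization sends $Gr_r$ to $BU(r)$. The comparison map in Problem~\ref{problemintro:algebraizability} is therefore the map on homotopy classes induced by realization,
\[
[X, Gr_r]_{\aone} \longrightarrow [X^{an}, Gr_r(\cplx)],
\]
and I will show this is a bijection when $X$ is even cellular.

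The first step uses the even cellularity of Definition~\ref{defn:evencellular}. My reading is that $X$ is built in the pointed $\aone$-homotopy category (or stably in $\SH$) from cells $\pone^{\wedge n} = S^{2n,n}$ of even topological dimension. I would argue by induction over this cell structure. Both sides turn cofiber sequences $\bigvee S^{2n,n} \to X_{k-1} \to X_k$ into Puppe-type exact sequences of pointed sets and groups, and realization is compatible with them. By a five-lemma argument for the nonabelian, low-degree terms, bijectivity then reduces to two checks:
\[
[S^{2n,n}, Gr_r]_{\aone} \cong \pi_{2n}(BU(r)) \quad\text{and}\quad [S^{2n+1,n}, Gr_r]_{\aone} \to \pi_{2n+1}(BU(r)) \text{ is surjective.}
\]
The second map comes from the suspension terms in the Puppe sequences.

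The second step computes these motivic homotopy sheaves of $Gr_r$. In the stable range $Gr_r$ looks like $\mathbb{Z}\times Gr_\infty$, which represents algebraic $K$-theory, and Bott periodicity gives $[S^{2n,n}, \mathbb{Z}\times Gr]=K_0(\cplx)=\Z$, matching $\pi_{2n}BU$. The group $[S^{2n+1,n}, -]$ is $K_1$-type and vanishes over $\cplx$ in the relevant weights, matching $\pi_{2n+1}BU = 0$. Outside the stable range, i.e.\ for $n > r$, I would use the fiber sequences $\aone^{r}\setminus 0 \to Gr_{r-1} \to Gr_r$. Realization sends these to $S^{2r-1}\to BU(r-1)\to BU(r)$, and I would induct on $r$ to propagate bijectivity on the bidegrees $(2n,n)$ and $(2n+1,n)$.

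The main obstacle is the unstable input. I need the comparison
\[
\pi_{2n,n}(\aone^r\setminus 0) \longrightarrow \pi_{2n}(S^{2r-1})
\]
and its odd companion to be isomorphisms, or at least to have the right surjectivity and injectivity, in all the degrees that occur. The plan is to use the equivalence of cellular motivic spectra with ``even'' weight-graded $MU$-modules, that is, the comparison of Gheorghe--Isaksen type over $\cplx$ in its Chow--Novikov degree $0$ form. Those results say that on the diagonal bidegrees $(2n,n)$, homotopy of even cellular objects agrees with classical homotopy. I would then upgrade this to the unstable setting by Postnikov-tower arguments.
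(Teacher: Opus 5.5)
\textbf{Gap 1: there is no cell structure to induct over.} Your induction uses a decomposition that the hypothesis does not give you. In Definition~\ref{defn:evencellular}, \emph{even cellular} means only that the motive $\mathrm{M}(X)$ splits as a sum of $\Sigma^{2n,n}H\Z$. This provides no filtration of $X$ in $\Spc(\cplx)_*$ whose cofibers are wedges of $S^{2n,n}$. A splitting of the motive, or even a stable cell structure, also yields no Puppe sequences for unstable maps into $\mathrm{Gr}_r$.

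Even where such a structure does exist, for instance $\widetilde{{\mathbb P}^n}\simeq{\mathbb P}^n$, your indexing is off. Cells are attached along $S^{2n-1,n}$, not $S^{2n,n}$. The five lemma therefore needs injectivity on $[S^{2n-1,n},\mathrm{Gr}_r]$ and surjectivity on all of $[\Sigma X_{k-1},\mathrm{Gr}_r]$. The latter is a genuinely unstable question, since $\pi_{2m+1}BU(r)\neq 0$ outside the stable range; for example $\pi_7BU(2)=\Z/12$.

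This is why the paper resolves the \emph{target} rather than the source. Everything it uses about $X$ is cohomological: its somatic level and the behavior of the cycle class map. That is exactly the information the splitting of the motive provides.

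\textbf{Gap 2: the unstable input, where all the difficulty lives.} The cell-level statements you need, such as $[S^{2n,n},\mathrm{Gr}_r]_{\aone}\cong\pi_{2n}BU(r)$, are essentially the theorem for $X=\mathrm{Q}_{2n}$. The route you sketch does not reach them, and several claims along it are false.
\begin{enumerate}[noitemsep,topsep=1pt]
\item $[S^{2n+1,n},\Z\times\mathrm{Gr}]=K_1(\cplx)=\cplx^\times\neq 0$, so this group does not vanish.
\item $\aone^r\setminus 0$ is not even cellular, and its diagonal homotopy does not match that of $S^{2r-1}$. By Morel, $\pi_{2r-2,r-1}(\aone^r\setminus 0)\cong K^{MW}_1(\cplx)=\cplx^\times$, whereas $\pi_{2r-2}(S^{2r-1})=0$. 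This is precisely the phenomenon recorded in Remark~\ref{rem:polynomialreps}.
\item Gheorghe--Isaksen-type comparisons concern \emph{stable}, $p$-complete cellular objects. They say nothing about the rational part (the uniquely divisible groups above) and nothing unstable.
\item ``Postnikov-tower arguments'' cannot bridge this. The layers of the motivic Postnikov tower of $\mathrm{Gr}_r$ are built from the unknown strictly $\aone$-invariant sheaves $\pi_n^{\aone}(\mathrm{Gr}_r)$, and these are not controlled by topology.
\end{enumerate}

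\textbf{What the paper uses instead.} The paper's argument, carried out there for $BSL_r$, supplies exactly the two missing pieces.
\begin{enumerate}[noitemsep,topsep=1pt]
\item It uses an arithmetic fracture square for the target, which exists because $BSL_r\in O(S^{4,2})$. The rational corner is handled by the splitting $\L_\Q BSL_r\simeq\prod_i K(\Q(i),2i)$ together with the cycle class maps $H^{2i-\epsilon}(X,\Q(i))\to H^{2i-\epsilon}(X^{an},\Q)$ for even cellular $X$.
\item The profinite corner is handled by the unstable $\mathrm{MGL}$-Adams--Novikov cobar resolution, whose convergence rests on the motivic Freudenthal theorem. This is combined with the Bachmann--Hopkins Wilson space theorem: for an $\mathrm{MGL}$-pure Tate module $\mathscr{M}$, the space $\Omega^\infty_{\pone}\mathscr{M}$ is again $\mathrm{MGL}$-pure Tate and realizes to $\Omega^\infty r_{\cplx}\mathscr{M}$. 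Maps out of $X$ can therefore be compared with topology term by term.
\end{enumerate}
Without substitutes for these two ingredients, your argument does not get from stable, $p$-complete information to the unstable, integral set $[X,\mathrm{Gr}_r]_{\aone}$.
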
	

We will situate the questions and results discussed here in three distinct senses:
\begin{enumerate}[noitemsep,topsep=1pt] 	
	\item {\em mathematically}, by explaining the ideas underlying the proofs; 	
	\item {\em rationally},\footnote{The choice of terminology is inspired by what are typically called {\em rational reconstructions}; see, e.g., \cite[pp. 258-9]{Mehrtens}: ``The mathematician's understanding of the history of his subject is frequently sharply anti-sociological. But as frequently it is a 'rational reconstruction' of a development governed by universal laws - those of mathematics itself.''} by reconstructing earlier mathematics in modern language and thereby explaining why the questions appear natural from a contemporary perspective;
	\item {\em historically}, by asking how the questions themselves became mathematically meaningful and identifying some of the mathematical, social, and cultural conditions that made their formulation possible.
	\end{enumerate}
These three perspectives are complementary rather than competing. The first is the familiar mode of exposition in contemporary mathematics. The second is perhaps the most common way mathematicians explain the history of their subject to one another. Its great strength is that it constructs a mathematical continuity across disparate periods, allowing earlier work to be understood as progress toward contemporary mathematics. Its principal weakness is precisely the same: by translating the mathematics of the past into contemporary language, it transforms the very questions under consideration. What emerge are not historical problems expressed in modern language, but genuinely modern problems having historical antecedents. The third, while familiar within the history of mathematics, is comparatively rare in contemporary research mathematics. Our aim is not to privilege one of these perspectives over the others, but to place them in dialogue.

\subsubsection*{Acknowledgments/Disclaimer}
These notes are loosely based on lectures given by the first and third authors at the 2025 Summer Algebraic Geometry Research Institute. The main new results described here have had a long gestation: they form part of a program initiated in collaboration with Jean Fasel, and we thank him for his collaboration and many discussions over the years. We also thank Burt Totaro and an anonymous referee for numerous incisive comments that helped shape the revision.

Neither the mathematical narrative nor the historical one should be regarded as definitive. Both reflect our present understanding of questions whose development continues to unfold. We have nevertheless chosen to present this account in its current form, not as the final word on these topics, but in the hope that it contributes to an ongoing conversation about both the mathematics and its history.


\section{Approximating motivic spaces and topological comparison results}
\label{s:mathematics}
Theorems~\ref{thmintro:polynomialrep} and \ref{thmintro:cellularalgebraizability} are comparison statements: the formulation of each involves a map from a source having an algebraic origin to a target housed in topology.  Our starting point is the observation that the target is an object of homotopy theory; this follows by definition in Theorem~\ref{thmintro:polynomialrep}.  In Theorem~\ref{thmintro:cellularalgebraizability} the interpretation of $\mathscr{V}_r^{top}(X)$ as the set of homotopy classes $[X,BU(r)]$, where $BU(r)$ is the classifying space for complex vector bundles, is known as the Pontryagin--Steenrod representability theorem for vector bundles (we will revisit this statement in Section~\ref{ss:fiberbundles}).  

Rather than try to describe this set of homotopy classes directly, we utilize systematic homotopical approximations to the target (i.e., $S^m$ or $BU(r)$ in the cases of interest).  More precisely, these targets are objects of {\em unstable} homotopy theory, while the approximation schemes exploit {\em stable} homotopy theory, which is frequently much more computable because of its intrinsically cohomological nature (perhaps this distinction led Sammy Eilenberg to quip: ``we can distinguish two cases–the stable case and the interesting case'').  These homotopical approximations yield spectral sequences computing the relevant sets of homotopy classes (see Section~\ref{ss:approximatingspaces}).  Along the way, we try to highlight the utility of having different approximation schemes.  

To analyze the source, we invoke a parallel story. Morel--Voevodsky motivic homotopy theory provides a homotopical interpretation of the algebraic source. Section~\ref{ss:motivichomotopytheory} recalls the necessary constructions and reformulates the comparison problems as questions about motivic homotopy classes.  Thus the original comparison problem becomes one of comparing motivic and ordinary homotopy classes.  But homotopy classes are themselves connected components of suitable mapping spaces. Rather than comparing these spaces directly, our strategy is to compare the corresponding approximation schemes.

A first complication arises because the usual notion of connectivity is poorly adapted to motivic phenomena. The weak cellular classes introduced in \cite{ABHFreudenthal} refine connectivity by incorporating the additional "weights" present in motivic homotopy theory.  Such refined connectivity is essential even to formulate a precise statement relating unstable to stable motivic homotopy theory, the latter being substantially more amenable to computation.  This discussion culminates in Section~\ref{ss:stabloccomp}, where we state the motivic Freudenthal suspension theorem, whose name pays homage to its counterpart in topology, which provides basic control over the stabilization procedure.

However, the approximations we aim to employ require more than stabilization alone: to make the resulting approximations more tractable, we exploit arithmetic fracture techniques in the sense of Quillen and Sullivan, together with their motivic analogues.  These constructions decompose complicated spaces into pieces that are individually more homotopically accessible.  Perhaps the most surprising feature from the perspective of algebraic geometry is that even ``reasonable'' motivic spaces are understood by decomposing them into pieces that no longer possess an evident algebro-geometric interpretation.

The preceding constructions become useful only after they are related to topology. This relationship is encoded by the complex realization functor, whose basic properties we outline.  Section~\ref{ss:somaticlevel} develops a bootstrapping procedure for propagating comparison results from well-understood motivic spaces to much larger classes of examples.  The starting point here is a cohomological comparison, namely between the cohomology of $\Spec \cplx$ in the motivic setting and the ordinary cohomology of a point.  With finite coefficients, the latter is reconstructed from the former, and this observation essentially goes back to Bloch.  An axiomatic reformulation of this comparison statement leads us to the notion of {\em somatic level} (which incorporates a refinement of the comparison keeping track of ``weights'' in motivic cohomology).  The bootstrap begins with even cellular spaces, which we introduce here, and which also intercede in the hypotheses on $X$ in the formulation of Theorem~\ref{thmintro:cellularalgebraizability}.  We also introduce a key black-box: the Bachmann--Hopkins resolution of M. Hill and M. Hopkins' Wilson space hypothesis.  Its role here is to provide control of realization for the constituent spaces appearing in our approximation schemes.

Finally, Section~\ref{ss:homotopicaldescent} assembles the preceding ingredients into a comparison argument.  Rather than giving complete proofs of the main theorems, we illustrate the method in a technically simpler setting.  One striking consequence of our approach is that while it is quite difficult to answer questions about whether {\em specific} maps are algebraic, it is comparatively easier to establish this for the {\em totality} of such maps.

\subsection{Approximating mapping spaces}
\label{ss:approximatingspaces}
One basic tool of homotopy theory that has routinely been leveraged to shed light on the geometry of a given space is systematic approximation by spaces with controlled homotopy types.  One familiar example of the kind of construction we have in mind is the Postnikov tower.  Here, one observes that a suitable class of spaces (e.g., simply connected spaces for concreteness) can be well--approximated by Eilenberg--Mac Lane spaces $K(A,n)$ for various $n$; here control stems from the fact that Eilenberg--Mac Lane spaces have at most $1$ non-vanishing homotopy group.  In more detail, given a simply connected space $X$, there is a sequence of spaces $\tau_{\leq n} X$ together with maps $X \to \tau_{\leq n} X$ and maps $\tau_{\leq n}X \to \tau_{\leq n-1} X$, each of which is classified by a map $\tau_{\leq n-1} X \to K(\pi_n(X),n+1)$, and such that the map $X \to \lim_n \tau_{\leq n} X$ is an equivalence.  Such an ``approximation'' essentially always comes with a tradeoff; in this case: information about homotopy groups is typically hard to come by, whereas the approximation is quite good.  Relevant for our discussion later is the fact that the objects $K(A,n)$ are of cohomological nature and therefore objects of stable homotopy theory.  

In specific cases, reinterpretation of the above construction leads to different approximation schemes.  For example, if $X = S^n$, then the Brouwer degree gives an identification $\pi_n(S^n) = \Z$, and therefore the first stage of the Postnikov tower corresponds to a map $S^n \to K(\Z,n)$.  One interpretation of the Dold-Thom theorem is that ``the free abelian group'' on $S^n$, which can be modelled by the infinite symmetric product $\Sym^{\infty} S^n$, is $K(\Z,n)$ (see \cite{DoldThom1958}).  If we model topological spaces by simplicial sets, then the (simplicial) free abelian group on a simplicial model of $S^n$ is itself a model of $K(\Z,n)$, which provides one justification for the terminology.  In this case, the first stage of the Postnikov tower is modelled by the canonical map from a simplicial set to the associated simplicial free abelian group.  The homotopy of the target encodes the total homology of the source, so we can view the target as the best homological approximation to the source.  We can then attempt to measure the failure of this approximation, which is geometrically incarnated in the (homotopy) fiber of the map and repeat the procedure.  In essence, this description can be extracted from the Cartan--Serre method of ``killing homotopy groups''.

Thinking simplicially, one can also see some of the problems inherent in approximation schemes built in this spirit.  For example, if $X$ is a pointed connected space modelled by a simplicial set, then the loop space $\Omega X$ admits a nice model as an explicit group called the Kan loop group, frequently denoted $GX$.  As a (simplicial) group, this has a lower-central series filtration.  The first quotient attached to this filtration is precisely the abelianization of the group, which allows one to investigate homology.   There are always maps from $GX$ to the various quotients by terms in the lower central series, but the limit of this tower of groups only coincides with $GX$ under additional nilpotence hypotheses (see, for example, \cite{Rector1966,BousfieldEtAl1966, BousfieldCurtis1970, BousfieldKan1972} for related discussion and references).  

Here is a standard way to systematize some of the above constructions.  Write $\Spc_*$ for the $\infty$-category of pointed spaces.\footnote{See \cite[\S 1.2.16]{LurieHTT} for relevant discussion; we employ the terminology of \cite{LurieHTT} and \cite{LurieHA} throughout.}  We build the category $\SH$ by formally inverting suspension, i.e., the operation of smash product with $S^1$; objects of this category are typically called spectra.  The resulting category is a symmetric monoidal stable $\infty$-category; the unit for symmetric monoidal structure is the sphere spectrum $\1$.  Given any pointed space $X$, we write $\Sigma^{\infty}$ for the infinite suspension spectrum from $\Spc_* \to \SH$; this functor has a right adjoint $\Omega^{\infty}: \SH \to \Spc_*$; we can think of $\SH$ as a linearization of $\Spc_*$.  

If $E \in \SH$ is a (connective) spectrum, then there is a unit map $X \to \Omega^{\infty}(E \sma \Sigma^{\infty}X_+)$.  The homotopy of the space on the right is identified with the (reduced) $E$-homology of $X$,  also written $E_*(X)$.  One can define $D_1(X) := \fib(X \to \Omega^{\infty}(E \sma \Sigma^{\infty} X_+))$; by construction $D_1(X)$ comes equipped with a map to $X$.  Iterating this construction, i.e., setting $D_0(X) = X$, $D_i(X) := D_1(D_{i-1}(X))$ for $i \geq 1$, we obtain a sequence of spaces and maps as above.  The homotopy exact sequences of this tower of spaces yield an exact couple yielding a spectral sequence (see \cite{BenderskyCurtisMiller1978} for a treatment along these lines; we will revisit this description later).  

For simple $E$, e.g., mod $p$ homology $H\mathbb{Z}/p$ or complex cobordism $MU$, the corresponding spectral sequences are describable: they are called unstable Adams or Adams--Novikov spectral sequences.  Here, the tradeoffs are more subtle.  Computing mod $p$ homology or $MU$-homology of a space is, in principle, easier than attempting to determine homotopy, but the $E_2$-page of the spectral sequence requires understanding the $E$-homology of $X$ as a comodule over the coalgebra of $E$-homology co-operations.  Unlike the case of Postnikov towers above, convergence questions around the resulting spectral sequences are more involved, but well-studied.  After developing some relevant machinery, we aim to expose some similar constructions in motivic homotopy theory, where the definitions are parallel, if slightly more notationally involved.

\subsection{Motivic homotopy theory}
\label{ss:motivichomotopytheory}
Assume $k$ is a field.  We write $\Sm_k$ for the category of smooth $k$-schemes.  The Morel--Voevodsky motivic homotopy theory provides a model homotopy theory for smooth varieties \cite{MV}: one enlarges $\Sm_k$ to be able to ``do homotopy theory'', i.e., so that the resulting category is complete and cocomplete, and then imposes two kinds of relations: forcing $\aone$-invariance and imposing Nisnevich excision.  A key foundational text on the unstable motivic homotopy theory over a field is Fabien Morel's text \cite{MField}, but his 2006 ICM address \cite{MICM} presents something of a panoramic vista. Other surveys of the unstable theory include \cite{AntieauElmanto,WickelgrenWilliamsSurvey}.

Nowadays, the first step is accomplished by enlarging $\Sm_k$ to $\P(\Sm_k)$--the $\infty$-category of presheaves of spaces on $\Sm_k$--via the Yoneda embedding.  The second step is obtained by looking at the full $\infty$-category $\Spc(k)$ spanned by those $\mathscr{X} \in \P(\Sm_k)$ that are (i) $\aone$-invariant, i.e., such that $\mathscr{X}(U) \to \mathscr{X}(U \times \aone)$ is an equivalence for all $U \in \Sm_k$, and (ii) $\mathscr{X}$ turns elementary Nisnevich distinguished squares into (homotopy) pullback squares.  The resulting object, denoted $\Spc(k)$, is called the $\infty$-category of motivic spaces; there is a corresponding localization functor $\L_{mot}: \P(\Sm_k) \to \Spc(k)$ called {\em motivic localization}, which admits an explicit but nevertheless unwieldy in practice description (see, e.g., \cite[\S 3]{Hoyois2017SixOperations} for further details about this description of the $\infty$-category of motivic spaces).  We write $[\mathscr{X},\mathscr{Y}]_{mot}$ for the set of maps in the associated homotopy category of $\Spc(k)$; the latter is the usual Morel--Voevodsky motivic homotopy category.

Details regarding the motivic localization construction can be found in many places, but we refer the reader to \cite[\S 2]{ABHFreudenthal} for a presentation using notation consistent with that used here and summarizing many useful properties of $\L_{mot}$.  This construction is best behaved when $k$ is perfect in the sense that Morel's fundamental results about the structure of homotopy sheaves hold in this generality; see \cite[Introduction]{MField} for statements of the relevant results.  Therefore, it will frequently be convenient to assume $k$ is perfect in what follows, but in the interest of narrative efficiency, we will not indicate where this assumption is strictly necessary.

There are pointed (or based) versions of all the above constructions; we indicate these with additional decorations, e.g., $\P(\Sm_k)_*$ is the $\infty$-category of presheaves of pointed spaces, and $\Spc(k)_*$ is the associated $\infty$-category of pointed motivic spaces.  In these variants, we have analogs of the notion of wedge sum and smash product, which allow us to define analogs of classical spheres.  We write $S^i$ for the topological (or simplicial) $i$-sphere, which can be viewed as an object of $\P(\Sm_k)_*$ by taking the associated constant presheaf.  We write $\gm{\sma j}$ for the $j$-fold smash product of the pointed presheaf $\gm{}$ (with basepoint the identity section $1$).  We then write $S^{i+j,j}_k := S^i \wedge \gm{\sma j}$ (note the indexing).  

There are standard equivalences $\pone_k \sim S^{2,1}$, and ${\mathbb A}_k^{n} \setminus 0 \sim S^{2n-1,n}$ \cite[\S 3.2]{MV}.  Relevant to the statements from the introduction, if we define $\mathrm{Q}_{2n-1}$ to be the subset of ${\mathbb A}^{2n}_k$ with coordinates $(x_1,\ldots,x_n,y_1,\ldots,y_n)$ defined by the equation $\sum_i x_iy_i = 1$, and $\mathrm{Q}_{2n}$ to be the subset of ${\mathbb A}^{2n+1}_k$ with additional coordinate $z$ defined by the equation $\sum_i x_iy_i = z(1-z)$, then it is well-known that $\mathrm{Q}_{2n-1} \sim S^{2n-1,n}$, while $\mathrm{Q}_{2n} \sim S^{2n,n}$ \cite{ADF}.  

\begin{rem}
	Insofar as the discussion of the introduction is concerned, observe that, for each integer $n \geq 1$, there are isomorphisms of $\cplx$-schemes of the form: $\mathrm{S}^{2n-1}_{\cplx} \cong \mathrm{Q}_{2n-1}$ and $\mathrm{S}^{2n}_{\cplx} \cong \mathrm{Q}_{2n}$.
\end{rem}

The Grassmannian $\mathrm{Gr}_r := \colim_N \mathrm{Gr}_{r,r+N}$, where the (filtered) colimit is formed with respect to a standard sequence of inclusion maps, determines a motivic space.  A key property of this motivic space stems from the affine representability theorem, which shows that $\mathrm{Gr}_r$ plays a role in algebraic geometry analogous to, but slightly more restricted than, the role it plays in topology (see Section~\ref{ss:fiberbundles} for related discussion).

\begin{theorem}[Morel, Schlichting, A-Hoyois-Wendt {\cite[Theorem 1]{AHWI}}]
	\label{thm:vbrepresentability}
If $X$ is a smooth affine $k$-scheme, then there is a canonical bijection
\[
[X,\mathrm{Gr}_r]_{mot} \longrightarrow \mathscr{V}_r(X).
\]
\end{theorem}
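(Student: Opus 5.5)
We would follow the strategy of Morel, Schlichting and Asok--Hoyois--Wendt: show that the presheaf of groupoids of rank $r$ vector bundles already has the homotopy type of $\mathrm{Gr}_r$ on affines, after motivic localization. Write $\mathscr{V}ect_r$ for the presheaf on $\Sm_k$ sending $U$ to the groupoid of rank $r$ bundles on $U$. This is equivalent to the classifying presheaf $B_{Nis}GL_r$, since bundles are Zariski (hence Nisnevich) locally trivial. There is a natural map $\mathrm{Gr}_r \to \mathscr{V}ect_r$ classifying the tautological bundle. It induces $[X,\mathrm{Gr}_r]_{mot} \to \pi_0 (\L_{mot}\mathscr{V}ect_r)(X)$, and we would compose this with the identification of the target with $\mathscr{V}_r(X)$ established below.

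The plan has three steps.

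\textbf{Step 1.} Show that on affine $X$, the presheaf $X \mapsto \mathscr{V}ect_r(X)$ is $\aone$-invariant up to the relevant homotopy. Concretely, $\pi_0$ should be $\aone$-invariant on affines; this is Lindel's theorem, extended to all smooth affines over a field via Popescu's theorem.

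\textbf{Step 2.} Construct a "Nisnevich excision on affines" package for $\Sing^{\aone}\mathscr{V}ect_r$. Restricted to affine schemes, $\mathscr{V}ect_r$ sends affine Nisnevich squares to pullbacks; this is gluing of bundles (patching, Milnor squares). The key input is an affine representability criterion (AHW): if a presheaf $F$ on $\Sm_k$ satisfies affine Nisnevich excision and is $\aone$-invariant on affines, then $\Sing^{\aone}F$ agrees with $\L_{mot}F$ on affines. Concretely, $\pi_0 \L_{mot}F(X) = \pi_0 \Sing^{\aone}F(X)$ for $X$ affine. For $F = \mathscr{V}ect_r$, we would check that $\Sing^{\aone}\mathscr{V}ect_r$ satisfies affine Nisnevich excision. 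This reduces to the statement that for $GL_r$, the presheaf $\pi_0\Sing^{\aone}GL_r$ behaves well on affine Nisnevich squares. That in turn uses that $GL_r$ is isotropic reductive, so elementary matrices generate "enough" and $\aone$-path components of $GL_r(R)$ are controlled; this is the Moser/Asok--Hoyois--Wendt "transfer of excision" argument.

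\textbf{Step 3.} Identify $\mathrm{Gr}_r \to \Sing^{\aone}\mathscr{V}ect_r$ as a motivic equivalence. Over affine $U$, any rank $r$ bundle is a summand of a trivial bundle, hence pulled back from some $\mathrm{Gr}_{r,r+N}$. The resulting map $\pi_0$ of Grassmannian maps (mod $\aone$) to bundles is a bijection on affines by a stabilization argument: two embeddings into trivial bundles become $\aone$-homotopic after enlarging $N$. Combining with Step 2 and Lindel's theorem yields $[X,\mathrm{Gr}_r]_{mot} \cong \pi_0\Sing^{\aone}\mathscr{V}ect_r(X) = \mathscr{V}_r(X)$.

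The main obstacle is Step 2, proving affine Nisnevich excision for $\Sing^{\aone}$ of a non-$\aone$-invariant presheaf. Homotopy invariance of bundles only holds on affines, and $\Sing^{\aone}$ does not obviously preserve excision. To handle it, we would first reduce to Zariski squares plus étale-neighborhood squares. We would then exploit the Bass--Quillen/Lindel input to show that $\Sing^{\aone}B GL_r$ and $B\Sing^{\aone}GL_r$ agree on affines. Finally, we would invoke excision for $\Sing^{\aone}GL_r$, which follows from the fact that $\pi_0$ of $\Sing^{\aone}GL_r$ on affines is a group quotient by elementary matrices satisfying a gluing (Suslin-type local-global) principle.
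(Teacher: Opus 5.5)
The survey does not prove this theorem; it quotes it from Asok--Hoyois--Wendt, so the comparison here is with their argument. Your architecture matches theirs in most respects. Lindel's theorem gives $\aone$-invariance of $\pi_0\mathscr{V}ect_r$ on smooth affines (Popescu is unnecessary over a field, since Lindel already covers smooth affine $k$-algebras). Bundles satisfy Nisnevich descent. An $\aone$-invariant presheaf with affine Nisnevich excision agrees with its motivic localization on affines. So everything hinges on affine Nisnevich excision for $\Sing^{\aone}\mathscr{V}ect_r$, and that is exactly where your proposal has a gap.

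The reduction you sketch does not reach $\mathscr{V}ect_r$. The equivalence $\Sing^{\aone}BGL_r\simeq B\Sing^{\aone}GL_r$ is formal: both sides are iterated geometric realizations, and realization commutes with finite products. It holds on all of $\Sm_k$ without any Lindel input. But it concerns the presheaf bar construction, whose values are connected. The object you need is its Zariski sheafification $\mathscr{V}ect_r$, and $\Sing^{\aone}$ does not commute with sheafification. So excision for $\Sing^{\aone}GL_r$, itself a much deeper fact, does not yield excision for $\Sing^{\aone}\mathscr{V}ect_r$ by anything you have written.

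Moreover, describing $\pi_0\Sing^{\aone}GL_r$ as a quotient by elementary matrices, with Suslin's normality and local--global principle, requires $r\ge 3$ (Vorst, Suslin). For $r=2$ it fails: Cohn's matrix in $SL_2(k[x,y])$ is naively $\aone$-homotopic to the identity (rescale $x\mapsto tx$) but does not lie in $E_2$. This is the obstruction that made Morel's original argument along these lines exclude rank $2$.

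The missing idea is that your Step 1 already makes Step 2 formal. For affine $Y$, Lindel gives $\pi_0\mathscr{V}ect_r(\Delta^n_Y)=\mathscr{V}_r(Y)$ for all $n$. Hence, for an affine Nisnevich square $Q$, the squares $\mathscr{V}ect_r(\Delta^n\times Q)$ form a levelwise pullback of simplicial spaces whose $\pi_0$ are constant simplicial sets. Every map of constant simplicial sets is a Kan fibration. The $\pi_*$-Kan condition also holds automatically, since over each component one gets simplicial groups, which are Kan. The Bousfield--Friedlander theorem then says geometric realization preserves the pullback. This is the AHW argument; it uses nothing about $GL_r$ and is uniform in $r$.

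Step 3 also needs repair. A bijection $\pi_0\Sing^{\aone}\mathrm{Gr}_r(U)\cong\mathscr{V}_r(U)$ on affines does not show that $\mathrm{Gr}_r\to\mathscr{V}ect_r$ is a motivic equivalence. That equivalence is what identifies $[X,\mathrm{Gr}_r]_{mot}$ with $[X,\mathscr{V}ect_r]_{mot}$. Use the Morel--Voevodsky argument the survey cites instead. $\mathrm{Gr}_r$ is the quotient of the $\aone$-contractible infinite Stiefel variety by a free, Zariski-locally trivial $GL_r$-action, which gives $BGL_r\simeq\mathrm{Gr}_r$ motivically. Then $BGL_r\to\mathscr{V}ect_r$ is a Zariski-local equivalence.
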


There are numerous variants of this kind of representability theorem, and we refer the reader to \cite{AHWI,AHWII,AHWIII,AHWOctonion,AQuadrics} for details.  For the purposes of our later exposition, observe that there is a motivic equivalence of the form $BGL_r \to \mathrm{Gr}_r$ \cite[\S 4 Proposition 3.7]{MV}, where $BGL_r$ is the presheaf $U \mapsto BGL_r(\Gamma(U,\mathscr{O}_U))$.  We can also describe isomorphism classes of vector bundles of rank $r$ equipped with a specified trivialization of the determinant; these are represented by the space $BSL_r$, defined analogously.

Another useful class of spaces comes from Voevodsky's motivic Eilenberg Mac Lane spaces.  If $A$ is an abelian group, and $m$ and $n$ are integers, then we write $K(A(m),n)$ for the associated motivic Eilenberg--Mac Lane space; we refer to \cite[\S 3]{VMEM} for a precise definition, but only indicate that the main results of that paper provide a description of this object, motivated by the classical Dold--Thom theorem.  

If $\mathscr{X}$ is a pointed motivic space, then set $\Omega^{i,j}\mathscr{X} := \iMap_*(S^{i,j},\mathscr{X})$ (the pointed internal mapping object).  If $j = 0$, we will frequently write $\Omega^i \mathscr{X}$ for the resulting space.  There are associated notions of fiber and cofiber sequences and we freely use this terminology.  

It will be useful to have refinements of classical notions of connectivity; this can be achieved by employing a variant of Dror's cellularization and nullification techniques \cite{Farjoun}.  While higher connectivity is intuitively described in terms of maps from certain spheres being null-homotopic, the presence of bi-graded spheres in motivic homotopy theory suggests that we should have refined notions of connectivity taking these additional spheres into account.   

To this end, consider the set of morphisms $\{ S^{i,j} \times U \to U | U \in \Sm_k\}$; we call this the set of generating $S^{i,j}$-weak equivalences.  We consider the smallest (strongly) saturated collection of morphisms in $\Spc(k)$ containing the generating weak equivalences and call these $S^{i,j}$-weak equivalences.  We view the resulting $S^{i,j}$-local objects inside $\Spc(k)$ and write $\L^{i,j}$ for the corresponding localization functor; we will refer to this as the $S^{i,j}$-nullification functor ($S^{i,j}$-local spaces will be called $S^{i,j}$-null spaces).

\begin{defn}
	We say that a pointed motivic space $\mathscr{X}$ lies in the {\em weak-cellular class} $O(S^{i,j})$ if $\L^{i,j} \mathscr{X} = \ast$, a map $f$ is an $S^{i,j}$-equivalence if $\L^{i,j}(f)$ is an equivalence and $\mathscr{X}$ is $S^{i,j}$-null if $\mathscr{X} \to \L^{i,j} \mathscr{X}$ is an equivalence.
\end{defn}

The weak cellular class $O(S^{i,j})$ is closed under formation of colimits and cofiber extensions and the collection $S^{i,j}$-null spaces is closed under limits and fiber extensions (assuming the base of the fiber sequence is connected).  A pointed connected motivic space $\mathscr{X}$ is $S^{i,j}$-null if and only if the pointed internal mapping space $\iMap_*(S^{i,j},\mathscr{X})$ is contractible (use the fact that $\mathscr{X}$ is connected, which implies $\ast \to \mathscr{X}$ is an effective epimorphism).  One may conclude that if $\mathscr{X}$ is pointed and connected, then $\mathscr{X}$ is $S^{i,j}$-null if and only if for every integer $l \geq 0$ and every $U \in \Sm_k$, any map $S^{i+l,j} \wedge U_+ \to \mathscr{X}$ is null.  

\begin{rem}
	A terminological warning is in order, especially when keeping algebraic varieties in mind: the weak cellular class $O(S^{i,j})$ contains all suspensions of the form $S^{i,j} \wedge X_+$.  While ``cellular varieties'' can be used to build examples lying in some weak cellular classes, the latter is really better compared with ``connectivity''.  We will explore this more in Example~\ref{ex:cellularclassBSLr}.
	
	Indeed, the weak cellular class $O(S^{n,0})$ consists precisely of $n$-connective spaces, and $S^{n,0}$-null spaces are precisely the $(n-1)$-truncated spaces; in fact the functor $\L^{n,0}$ can be identified with the Postnikov truncation functor, denoted earlier by $\tau_{\leq {n-1}}$.  Intuitively, the weak cellular class $O(S^{p,q})$ consists of spaces that are ``topologically'' $p-q$-connective and ``Tate'' $q$-connective (see \cite[Corollary 3.1.27]{ABHFreudenthal} for a precise statement).
\end{rem}

\begin{ex}
	\label{ex:cellularclassBSLr}
For any integer $n \geq 1$, ${\mathbb P}^n \in O(S^{2,1})$.  This statement is deduced inductively from the existence of cofiber sequences of the form 
\[
{\mathbb P}^{n} \longrightarrow {\mathbb P}^{n+1} \longrightarrow S^{2n+2,n+1}.
\]
Taking a colimit as $n \to \infty$, we see that ${\mathbb P}^{\infty} \in O(S^{2,1})$ as well; thus the motivic Eilenberg MacLane space $K(\Z(1),2)$, which is modelled by ${\mathbb P}^{\infty}$ lies in $O(S^{2,1})$ also.  On the other hand, both nonsingular curves of genus $g > 0$ and $\gm{}$ fail to lie in $O(S^{2,1})$ (or even $O(S^{1,0})$).

If $\nu: V \to X$ is a rank $r$ vector bundle on a smooth scheme $X$, then $Th(\nu) \in O(S^{2r,r})$: this is deduced from the fact that Zariski locally $\nu$ can be trivialized, in which case the relevant Thom space is a suspension of a smooth scheme.  Taking colimits, one sees that $MGL_r$, which is itself a motivic Thom space, lies in $O(S^{2r,r})$.  

There is also a ``quaternionic'' analog of the statement about projective spaces above.  Indeed, Panin and Walter define a sequence of smooth affine varieties $\mathrm{HP}^n := Sp_{2n+2}/(Sp_{2n} \times Sp_2)$ and observe that there are cofiber sequences of the form
\[
\mathrm{HP}^n \longrightarrow \mathrm{HP}^{n+1} \longrightarrow Th(\nu_{n})
\]
where $\nu_n$ is a tautological symplectic line bundle (i.e., rank $2$ vector bundle) over $\mathrm{HP}_n$.  Likewise, we deduce that $\mathrm{HP}^{\infty} \cong BSL_2 \in O(S^{4,2})$.

Finally, we may show that $BSL_r \in O(S^{4,2})$ by understanding the behavior of weak cellular classes under certain restricted classes of fiber sequences (see \cite[Proposition 3.1.23]{ABHFreudenthal} for more details).  In particular, there are fiber sequences of the form
\[
S^{2n-1,n} \longrightarrow BSL_{n-1} \longrightarrow BSL_n 
\]
Since $BSL_2 \in O(S^{4,2})$ and $S^{2n-1,n} \in O(S^{4,2})$ for $n \geq 3$, we can inductively deduce that $BSL_n \in O(S^{4,2})$ as well.  Finally, one can show that $K(\Z(r),2r) \in O(S^{2r,r})$ as well; this is more involved, but see, e.g., \cite[Example 3.3.8]{ABHFreudenthal}).
\end{ex}

\subsection{Stabilization, localization and completion}
\label{ss:stabloccomp}
Three tools that have proven extremely useful in classical homotopy theory are stabilization, localization and completion; all of these tools have motivic analogs.  The stable motivic homotopy category is obtained by formally inverting smashing with the motivic sphere $S^{2,1}$; we write $\SH(k)$ for the resulting category, which is a stable $\infty$-category (see \cite{VICM} for a survey, but \cite[\S 6]{Hoyois2017SixOperations} for a construction in this spirit; the remaining terminology we will introduce briefly here is developed in more detail in \cite[\S 2]{ABHFreudenthal}).  One way of thinking about $\SH(k)$ is that it houses cohomology theories on smooth algebraic varieties that satisfy Nisnevich Mayer--Vietoris, are $\aone$-invariant and have a suspension isomorphism with respect to $S^{2,1}$.  Intuitively, the category $\SH(k)$ has a more ``linear'' nature than $\Spc(k)$.  

There is an induced functor $\Spc(k)_* \to \SH(k)$ sending a pointed motivic space $\mathscr{X}$ to its suspension spectrum $\Sigma^{\infty}_{\pone} \mathscr{X}$.  The sphere spectrum $\1_k$, obtained as the suspension spectrum of the zero sphere $S^0_k$, is the unit of a symmetric monoidal structure on $\SH(k)$.  We write $H\Z$ for the object of $\SH(k)$ corresponding to Voevodsky's motivic cohomology and $\mathrm{MGL}$ for Voevodsky's algebraic cobordism spectrum; these objects can be thought of as commutative rings in $\SH(k)$, and it makes sense to consider modules over these objects.  

We write $\mathrm{DM}_k$ for the category of $H\Z$-modules; this is an object equivalent to Voevodsky's ``big'' derived category of motives, and $\mathrm{Mod}(\mathrm{MGL})$ for the category of $\mathrm{MGL}$-modules.  There is an ``extension of scalars'' functor $\SH(k) \to \mathrm{DM}(k)$, and similarly for $\mathrm{MGL}$-modules.  The former functor is frequently called the motivic Hurewicz functor by analogy with classical topology: it sends the suspension spectrum of a smooth scheme $X$ with disjoint base-point added to its Voevodsky motive $\mathrm{M}(X) = H\Z \wedge \Sigma^{\infty}X_+$.  Analogously, given a motivic space $\mathscr{X}$, we set $\mathrm{MGL}[\mathscr{X}] := \mathrm{MGL} \wedge \Sigma^{\infty}\mathscr{X}$; we could call this the $\mathrm{MGL}$-motive of $\mathscr{X}$.

The functor $\Sigma^{\infty}_{\pone}$ has a right adjoint ``$\pone$-infinite loop space'' functor: 
\[
\Omega^{\infty}_{\pone}: \SH(k) \longrightarrow \Spc(k)_*.
\]  
This functor is rather subtle, and the relationship between a pointed motivic space and the associated suspension spectrum is quite complicated in general.  

In ordinary algebraic topology, stabilization is analyzed via the Freudenthal suspension theorem.  Without additional hypotheses in place, the analog of that theorem in motivic homotopy theory--replacing $S^{1}$ by $S^{2,1}$--is false.  The notion of weak-cellular classes, however, can be used to formulate a suitable motivic analog.  

\begin{theorem}[A-Bachmann-Hopkins {\cite[Theorem 1]{ABHFreudenthal}}]
	\label{thm:freudenthal}
	Assume $k$ is a field that has characteristic $0$.  If $\mathscr{X} \in \Spc(k)$ is a pointed motivic space that lies in $O(S^{p,q})$ with $p -q \ge 2, q \geq 2$, then the fiber of the stabilization map
	\[
	\mathscr{X} \longrightarrow \Omega^{2,1}\Sigma^{2,1} \mathscr{X}
	\]
	lies in $O(S^{a,2q})$ where $a = \min(2p-1,p+2q-1)$.
\end{theorem}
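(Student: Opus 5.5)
The plan is to follow the classical proof of the Freudenthal theorem via the James construction and the EHP/James--Hopf fiber sequence, adapted to the bigraded setting. I will control connectivity with the weak cellular classes $O(S^{i,j})$.

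\emph{Step 1: factor the stabilization map.} Since $\Sigma^{2,1} = S^1 \wedge \gm{} \wedge (-)$, I factor $\mathscr{X} \to \Omega^{2,1}\Sigma^{2,1}\mathscr{X}$ as
\[
\mathscr{X} \longrightarrow \Omega^{1,1}\Sigma^{1,1}\mathscr{X} \longrightarrow \Omega^{1,1}\Omega^{1,0}\Sigma^{1,0}\Sigma^{1,1}\mathscr{X}.
\]
The second map is $\Omega^{1,1}$ applied to the simplicial Freudenthal map for $\mathscr{Y}=\Sigma^{1,1}\mathscr{X}\in O(S^{p+1,q+1})$.

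\emph{Step 2: the simplicial part.} Here I use the motivic James construction. Since $\mathscr{Y}$ is connected, $\Omega\Sigma\mathscr{Y}\simeq J(\mathscr{Y})$, with its filtration whose graded pieces are $\mathscr{Y}^{\wedge n}$. The James--Hopf map $J(\mathscr{Y})\to J(\mathscr{Y}^{\wedge 2})$ gives a fiber sequence in a metastable range. Equivalently, I can use a Ganea/Blakers--Massey-type argument: the fiber of $\mathscr{Y}\to\Omega\Sigma\mathscr{Y}$ is built from $\Omega\Sigma(\mathscr{Y}^{\wedge 2})$ shifted down by one simplicial degree, i.e.\ it is a colimit of pieces of the form $\Sigma^{-1}$-desuspended joins. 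Closure of $O(-)$ under colimits and smash products, with $O(S^{a,b})\wedge O(S^{c,d})\subset O(S^{a+c,b+d})$, puts $\mathscr{Y}^{\wedge 2}$ in $O(S^{2p+2,2q+2})$. The fiber should then land in $O(S^{2p+1,2q+2})$, using the motivic Blakers--Massey statement for weak cellular classes from \cite{ABHFreudenthal}, which I take as available.

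\emph{Step 3: the $\gm{}$-part, the main obstacle.} This is the step with no topological analogue. I need the fiber of $\mathscr{X}\to\Omega^{1,1}\Sigma^{1,1}\mathscr{X}$ to lie in $O(S^{p+2q-1,2q})$, and then need $\Omega^{1,1}$ to shift classes by $(-1,-1)$. Applying $\Omega^{1,1}$ to objects of $O(S^{2p+1,2q+2})$ gives $O(S^{2p-1,2q})$ only under hypotheses of the form $p-q\ge2$, $q\ge2$. This is exactly where the characteristic $0$ hypothesis and Morel's structure results on homotopy sheaves should enter, through contraction $\mathbf{M}\mapsto\mathbf{M}_{-1}$ and Gersten resolutions. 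For the $\gm{}$-loop step I would use a Morel-type stabilization result: $\Omega_{\gm{}}\Sigma_{\gm{}}$ is an equivalence on sufficiently connected homotopy sheaves, as in Morel's $\gm{}$-stability for $2$-connected spaces. I would then bound the defect by the cross term, which is of weight at least $2q$ and of topological connectivity about $p+2q-1-2q=p-1$. I expect this to be the genuinely hard part, likely requiring the $S^{i,j}$-nullification theory developed in \cite{ABHFreudenthal}.

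\emph{Step 4: assemble.} Composing the two maps, the fiber of the composite is an extension of the fiber from Step 3 and the $\Omega^{1,1}$-loops of the fiber from Step 2. Closure of $O(-)$ under extensions gives membership in $O(S^{a,2q})$ with $a=\min(2p-1,p+2q-1)$.
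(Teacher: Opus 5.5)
The survey gives no proof of this statement; it quotes it from \cite{ABHFreudenthal}. Your Steps 1, 2 and 4 are reasonable bookkeeping, modulo the Blakers--Massey and extension results you import. But there is a genuine gap at Step 3, and that step carries essentially all of the content of the theorem.

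The tool you propose for the $\Gm$-direction does not exist, because $\Gm$-stabilization is not controlled by simplicial connectivity. Take the simplicial sphere $S^n=S^{n,0}$ with $n\ge 3$.
\begin{itemize}
\item By $\aone$-Hurewicz, $\underline{\pi}^{\aone}_n(S^n)\cong\Z$.
\item On the other side, $\underline{\pi}^{\aone}_n(\Omega^{1,1}\Sigma^{1,1}S^n)\cong\underline{\pi}^{\aone}_n(S^{n+1,1})_{-1}\cong(\mathbf{K}^{\mathrm{MW}}_1)_{-1}\cong\mathbf{GW}$.
\item The unit induces the inclusion $\Z\to\mathbf{GW}$, whose cokernel is the fundamental ideal sheaf $\mathbf{I}\neq 0$.
\end{itemize}
So the fiber of $S^n\to\Omega^{1,1}\Sigma^{1,1}S^n$ has $\underline{\pi}_{n-1}\cong\mathbf{I}$. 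It is less connected than $S^n$ itself, no matter how large $n$ is. In weight $q\ge 1$ the same computation gives $(\mathbf{K}^{\mathrm{MW}}_{q+1})_{-1}\cong\mathbf{K}^{\mathrm{MW}}_q$, an isomorphism.

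Any $\Gm$-Freudenthal statement must therefore be driven by the weight, and you supply no mechanism that sees weight. Your ``cross term'' has no construction behind it. $\Gm$ is not a suspension, so there is no James or Hilton--Milnor model for $\Omega^{1,1}\Sigma^{1,1}$ whose filtration quotients you could estimate. The bound you claim (weight $\ge 2q$, topological connectivity about $p-1$) is just the target $O(S^{p+2q-1,2q})$ restated.

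The second input in Step 3 is also assumed rather than proved: that $\Omega^{1,1}$ carries $O(S^{a,b})$ into $O(S^{a-1,b-1})$. Morel's formula $\underline{\pi}_i(\Omega^{1,1}\mathscr{Y})\cong\underline{\pi}_i(\mathscr{Y})_{-1}$ does not give this by itself. You would still need to know how contraction interacts with the Tate part of a weak cellular class.

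There is also a small arithmetic slip. A $(-1,-1)$ shift sends $O(S^{2p+1,2q+2})$ to $O(S^{2p,2q+1})$, not to $O(S^{2p-1,2q})$. This is harmless, since the former class is contained in the latter.

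What is missing is a weight-sensitive analysis of $\Gm$-stabilization. It would have to show that for $\mathscr{X}\in O(S^{p,q})$ with $q\ge 2$, the defect of $\mathscr{X}\to\Omega^{1,1}\Sigma^{1,1}\mathscr{X}$ has weight $\ge 2q$ and the stated topological connectivity. As written, your proposal reduces the theorem to exactly that statement and stops there.
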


\begin{rem}
	A consequence of the motivic Freudenthal suspension theorem is control of the fiber of the unit of the adjunction $\mathscr{X} \to \Omega^{\infty}_{\pone}\Sigma^{\infty}_{\pone} \mathscr{X}$ (there is an estimate on the weak cellular class of the fiber of this map, similar to the above).
\end{rem} 

Henceforth, we take $k = \cplx$, but much of what we say below will work when $k$ has characteristic $0$ and $-1$ is a sum of squares in $k$.  Localization and completion are well-behaved constructions on $\SH(k)$, and they are given by geometric constructions that induce the corresponding operations on (stable) homotopy sheaves.  In all situations, localization with respect to a set of primes can be obtained as a further Bousfield localization of $\Spc(k)$ or $\SH(k)$; the unstable situation is developed in \cite{AFHLocalization}; we write $\L_{\Q}$ for the corresponding localization functor.  One manifestation of this is the observation due to Voevodsky that rationalizing $\SH(k)$ yields $\mathrm{DM}(k)_{\Q}$.  Completion is more delicate, but the corresponding theory for pointed motivic spaces is developed in \cite{Mattis2024UnstablePCompletion}; we write $\L^{\wedge}$ for the corresponding profinite completion functor.  

Sullivan \cite[p. 30]{Sullivan2004GeometricTopology} lays out the goals of this construction quite explicitly.  Given a finitely generated abelian group $A$, we can form the $\Q$-vector space $A_{\Q}$, the profinite completion $A^{\wedge}$, and both of these constructions map to an ``adelic'' completion $A_{\mathbf{A}}$; the group $A$ then sits in a fiber square: it can be recovered as the fiber product of $A^{\wedge}$ and $A_{\Q}$ over $A_{\mathbf{A}}$.  Localization and completion as above yield space-level refinements of this (abelian) group-theoretic construction.  

In particular, for any motivic space $\mathscr{X}$, there is a commutative square:
\[
\xymatrix{
\mathscr{X} \ar[r]\ar[d] & \L_{\Q}\mathscr{X} \ar[d] \\
\L^{\wedge} \mathscr{X} \ar[r] & \L_{\Q} \L^{\wedge} \mathscr{X}
}
\]
Ideally, the above square would be a fiber square (which we would call an arithmetic fracture square by analogy with Sullivan's construction), but the technical issues alluded to above regarding $\L^{\wedge}$ prevent us from making this assertion in general (see \cite[Theorem 8.7]{Mattis2024ArithmeticFracture} for a precise statement).  Nevertheless, we do have appropriate arithmetic fracture squares for spaces $\mathscr{X} \in O(S^{4,2})$, e.g., $BSL_r$.  As a consequence, to understand such spaces, we need to analyze their rationalizations.  For $BSL_r$, observe that ``universal'' Chern classes correspond to maps in the motivic homotopy category $c_i: BSL_r \to K(\Z(i),2i)$, $2 \leq i \leq r$.  The rationalization of $BSL_r$ is then described in the following result, which can be deduced from \cite[Theorem 5.2.1]{AFHLocalization} by delooping.

\begin{theorem}[A.-Fasel-Hopkins]
	\label{thm:rationalsplittingBSLr}
	If $k$ is a field in which $-1$ is a sum of squares, then the map
	\[
	(c_2,\ldots,c_r): BSL_r \longrightarrow K(\Z(2,4)) \times \cdots \times K(\Z(r),2r)
	\]
	is an equivalence after applying $\L_{\Q}$.
\end{theorem}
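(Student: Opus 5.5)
We reduce to rational stable information and then induct on $r$ using the fiber sequences $S^{2r-1,r} \to BSL_{r-1} \to BSL_r$ from Example~\ref{ex:cellularclassBSLr}. Since $BSL_r \in O(S^{4,2})$ is simply connected in the appropriate motivic sense, the rationalization $\L_{\Q}$ is well behaved on it: $\L_{\Q}$ preserves fiber sequences whose base is simply connected and rationalizes homotopy sheaves. Thus it suffices to show that $(c_2,\ldots,c_r)$ induces an isomorphism on rational homotopy sheaves, equivalently that it is an $\L_{\Q}$-equivalence.

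The base case is $r=2$. Here $BSL_2 \simeq \mathrm{HP}^{\infty}$, and we must show that $c_2: \mathrm{HP}^\infty \to K(\Z(2),4)$ is a rational equivalence. We first compute the rational motivic cohomology of $\mathrm{HP}^\infty$. Using the Panin--Walter cofiber sequences $\mathrm{HP}^n \to \mathrm{HP}^{n+1} \to Th(\nu_n)$ together with the Thom isomorphism, we obtain $H^{*,*}(\mathrm{HP}^\infty;\Q) \cong H^{*,*}(k;\Q)[c_2]$. On the other side, we use Voevodsky's computation (\cite{VMEM}) that, rationally, the motive of $K(\Z(2),4)$ is the free graded-commutative algebra on a class in bidegree $(4,2)$; rationally there are no Steenrod operations. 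Hence $c_2$ induces an isomorphism of rational motives, i.e.\ an $H\Q$-equivalence.

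Next we promote this to an $\L_{\Q}$-equivalence. By Voevodsky, $\SH(k)_{\Q} \simeq \mathrm{DM}(k)_{\Q}$ holds after inverting the relevant $\eta$-torsion data. The hypothesis that $-1$ is a sum of squares is exactly what ensures that the ``minus part'' ($\eta$ inverted, governed by real points and Witt groups) vanishes rationally. Consequently $\SH(k)_{\Q} = \mathrm{DM}(k)_{\Q}$, and an $H\Q$-equivalence of spaces in $O(S^{4,2})$ is a rational stable equivalence. We then need an unstable Whitehead-type theorem: for spaces in $O(S^{4,2})$, which are nilpotent (indeed simply connected), a map inducing an isomorphism on rational motivic homology is an $\L_{\Q}$-equivalence. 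I would prove this by a Postnikov/Whitehead induction on homotopy sheaves, using Morel's strict $\aone$-invariance of the homotopy sheaves and the rational Hurewicz theorem available in \cite{AFHLocalization}.

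For the induction step, we assume the result for $BSL_{r-1}$. Rationally, $S^{2r-1,r}$ is $K(\Q(r),2r-1)$; this follows from the rational splitting of spheres \cite{AFHLocalization}, which needs $-1$ a sum of squares so that the $\eta$-part dies. The fiber sequence above is then compared with the product fiber sequence
\[
K(\Q(r),2r-1) \to \prod_{i\le r-1}K(\Q(i),2i) \to \prod_{i\le r}K(\Q(i),2i),
\]
where the comparison is made compatible by looping $c_r$, using that $c_r$ restricts to the transgression of the fundamental class of the fiber; this is the Euler class of the tautological bundle. The five lemma on rational homotopy sheaves then concludes the step.

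The main obstacle is the unstable rational Whitehead step: identifying $\L_{\Q}$ of spaces in $O(S^{4,2})$ via rational motivic homology, together with the vanishing of the rational $\eta$-inverted part under the sum-of-squares hypothesis. Concretely, this requires knowing that the rational sphere $S^{2r-1,r}_{\Q}$ is an Eilenberg--Mac Lane space. I would take this from the rational computation of $\pi_{*,*}(\1_{\Q})$ via Morel's theorem $\1_{\Q}=H\Q\oplus \1_{\Q}^{-}$, with $\1_{\Q}^{-}=0$ when $-1$ is a sum of squares.
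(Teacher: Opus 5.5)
The inductive step is sound, but the base case rests on a step you don't establish, and your sketch would not supply it. For $r=2$ you show that $c_2\colon \mathrm{HP}^\infty\to K(\Z(2),4)$ is an $H\Q$-equivalence. You then invoke an unstable rational Whitehead theorem for spaces in $O(S^{4,2})$. Since $\SH(k)_\Q\simeq\mathrm{DM}(k)_\Q$ here, an $H\Q$-equivalence says only that $\Sigma^\infty_{\pone}c_2$ is a rational equivalence. Getting back to unstable rational homotopy sheaves needs extra input, such as conservativity of $\pone$-stabilization on suitably connective rationalized spaces. A Postnikov induction with Hurewicz-type results does not provide this, because those results compare homotopy sheaves with ($S^1$-stable) $\aone$-homology, not with motivic homology.

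The same problem appears in your last paragraph. The fact that $S^{2r-1,r}$ is rationally $K(\Q(r),2r-1)$ cannot be read off from $\1_\Q\simeq H\Q$, because stable information does not determine unstable rational homotopy: classically $\pi_*^s\otimes\Q$ is concentrated in degree $0$, yet $\pi_{4n-1}(S^{2n})\otimes\Q\neq 0$. It is a genuinely unstable theorem, \cite[Theorem 5.3.3]{AFHLocalization}, and should be cited as such, as in Remark~\ref{rem:polynomialreps}.

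The repair costs nothing: drop the separate base case. Since $BSL_1=\ast$, the sequence $S^{3,2}\to BSL_1\to BSL_2$ is just $\Omega BSL_2\simeq SL_2\simeq\mathbb{A}^2\setminus 0$. So your inductive step, started at $r=1$, already handles $r=2$:
\begin{enumerate}[noitemsep,topsep=1pt]
	\item the null-homotopy of $c_r|_{BSL_{r-1}}$ furnished by the Thom class identifies the induced map on fibers with the fundamental class of $S^{2r-1,r}$;
	\item that class is a rational equivalence by \cite[Theorem 5.3.3]{AFHLocalization};
	\item the five lemma on rationalized homotopy sheaves finishes.
\end{enumerate}
The $\mathrm{HP}^\infty$ computation and any Whitehead theorem then become unnecessary.

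With this change your argument is correct, and it differs from the paper's. The paper simply deloops \cite[Theorem 5.2.1]{AFHLocalization}, a rational statement about $SL_r\simeq\Omega BSL_r$. You instead work directly on classifying spaces, with the Chern classes giving the comparison map a priori, so you need only the sphere theorem and the five lemma, not any decomposition of $SL_r$. The paper's route is shorter but hands the work to the analysis of $SL_r$ in \cite{AFHLocalization}.
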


\subsection{Complex realization and somatic level}
\label{ss:somaticlevel}
To approach the theorem statement in the introduction, it is necessary to get a more controlled understanding of the completion construction; we do this by explicit comparison.  Ideally, we would like to describe completion for motivic spaces over $\cplx$ whose mod $p$ motivic cohomology is ``controlled by topology''.  We describe this as a two-step procedure: 1) isolate a class of spaces whose motivic cohomology is effectively ``determined by topology'', and 2) describe a ``homotopical descent'' procedure that allows us access to the resulting completion.  We focus on the first point in this section.

To that end, recall that there is a {\em complex realization} functor 
\[
r_{\cplx}: \Spc(\cplx) \longrightarrow \Spc
\]
which is obtained by left Kan extension of the functor sending a smooth $\cplx$-scheme $X$ to the complex manifold $X^{an}$ (see \cite[\S 3.3]{MV} for a discussion of this functor at the level of homotopy categories).  This functor is alternatively called {\em topological realization} or {\em Betti} realization by different authors.  As a left Kan extension, this functor preserves (homotopy) colimits as well as finite products of smooth schemes.

\begin{rem}
If $\mathscr{B}$ is a connected, pointed motivic space, then complex realization enjoys a number of nice properties: it commutes with formation of loop spaces, i.e., $r_{\cplx}(\Omega \mathscr{B}) \cong \Omega r_{\cplx} \mathscr{B}$; it preserves fiber sequences, i.e., if $\mathscr{F} \to \mathscr{E} \to \mathscr{B}$ is a fiber sequence, then $r_{\cplx}(\mathscr{F}) \to r_{\cplx}(\mathscr{E}) \to r_{\cplx}(\mathscr{B})$ is a fiber sequence as well and $r_{\cplx}$ preserves $n$-connective spaces for $n \geq 0$.  For these statements, the assumption that $\mathscr{B}$ is connected allows us to rewrite these fiber sequences in terms of a ``bar construction'' for the action of $G = \Omega \mathscr{B}$ on $\mathscr{F}$, in which case the assertion follows from the fact that $r_{\cplx}$ preserves (homotopy) colimits and finite products (more precisely, combine this observation with the conclusion of \cite[Lemma 2.1.16]{ABHFreudenthal}). 
\end{rem}	

The following comparison result provides the basic ingredient which we aim to bootstrap. 

\begin{theorem}[Bloch, Suslin, Voevodsky]
	\label{thm:BlochKatoconj}
	If $p$ is a prime number, then 
	\[
	H^{*,*}(\Spec \cplx,\Z/p) \cong {\mathbb F}_p[\tau] 
	\]
	where $\tau$ is a class in bi-degree $(0,1)$ corresponding to a choice of primitive $p$-th root of unity.
\end{theorem}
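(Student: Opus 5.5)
The plan is to compute $H^{p,q}(\Spec \cplx,\Z/p)$ one weight $q$ at a time, using three standard inputs about motivic cohomology of fields: vanishing above the diagonal, identification with Milnor $K$-theory on the diagonal, and the Beilinson--Lichtenbaum comparison with étale cohomology below it.

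First, the general vanishing. For any field $F$, $H^{p,q}(F,\Z/p) = 0$ when $p > q$; this follows from Voevodsky's identification of motivic cohomology with higher Chow groups. The groups also vanish for $q < 0$. Hence only the range $0 \le p \le q$ can contribute.

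Second, the diagonal $p = q$. By Nesterenko--Suslin and Totaro,
\[
H^{q,q}(F,\Z/p) \cong K^M_q(F)/p .
\]
For $F = \cplx$, every element of $\cplx^\times$ is a $p$-th power, so $K^M_1(\cplx)/p = 0$. Since $K^M_*$ is generated in degree $1$, it follows that $K^M_q(\cplx)/p = 0$ for $q \ge 1$, while $K^M_0(\cplx)/p = \Z/p$. So the diagonal contributes only $\mathbb{F}_p$ in bidegree $(0,0)$.

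Third, the range $p \le q$, which is the key input. By the Beilinson--Lichtenbaum conjecture, the natural map
\[
H^{p,q}(F,\Z/p) \longrightarrow H^p_{et}(F,\mu_p^{\otimes q})
\]
is an isomorphism for $p \le q$. This follows from the Bloch--Kato conjecture proved by Voevodsky (with Rost's norm varieties), via the Suslin--Voevodsky reduction. This is the main obstacle, and I would invoke it as a black box rather than reprove it.

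Fourth, the computation itself. For $F = \cplx$, which is algebraically closed, étale cohomology is just $\mu_p^{\otimes q}$ in degree $0$ and vanishes in positive degrees. Therefore
\[
H^{p,q}(\cplx,\Z/p) =
\begin{cases}
\mu_p^{\otimes q} \cong \Z/p, & p = 0,\ q \ge 0,\\
0, & \text{otherwise.}
\end{cases}
\]
A choice of primitive root $\zeta$ trivializes $\mu_p$. Let $\tau \in H^{0,1}$ be the class corresponding to $\zeta$; equivalently, $\tau$ is the image of $\zeta$ under $H^{0,1}(\cplx,\Z/p) \cong \mu_p(\cplx)$, coming from the Bockstein sequence with $\Z(1) \simeq \mathbb{G}_m[-1]$.

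Finally, the ring structure. Multiplicativity of the comparison map to étale cohomology identifies the product $H^{0,q} \otimes H^{0,q'} \to H^{0,q+q'}$ with the tensor product $\mu_p^{\otimes q} \otimes \mu_p^{\otimes q'} \to \mu_p^{\otimes(q+q')}$, which is an isomorphism. Hence $\tau^q$ generates $H^{0,q}$ for every $q \ge 0$, and
\[
H^{*,*}(\Spec \cplx,\Z/p) \cong \mathbb{F}_p[\tau],
\]
a polynomial ring on $\tau$.
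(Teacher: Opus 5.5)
Your argument is correct. Its overall shape matches the route sketched in the paper's remark: identify motivic cohomology of $\Spec \cplx$ with finite coefficients with \'etale cohomology, which for an algebraically closed field is $\mu_p^{\otimes q}$ concentrated in degree $0$. The difference is where the comparison isomorphism comes from, and yours is much heavier. You invoke Beilinson--Lichtenbaum for arbitrary fields. Via the Suslin--Voevodsky reduction, this rests on the norm residue isomorphism theorem of Voevodsky and Rost for fields other than $\cplx$, namely function fields over $\cplx$. For $\cplx$ itself the Bloch--Kato statement is vacuous, so all the content you import concerns those larger fields.

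The paper instead uses results specific to algebraically closed fields that do not depend on Bloch--Kato. One option is Voevodsky's identification of motivic cohomology with higher Chow groups, together with Suslin's theorem \cite{Suslin2011HigherChow} that over an algebraically closed field higher Chow groups with finite coefficients agree with \'etale cohomology (for a point, in every bidegree). The other is the Suslin--Voevodsky comparison of algebraic singular homology with finite coefficients and topological singular homology \cite{SuslinVoevodsky1996}, combined with Artin's comparison theorem. Since these give the isomorphism in all bidegrees at once, your separate vanishing and Milnor $K$-theory steps are not needed. Your second step is redundant in any case, because Beilinson--Lichtenbaum already covers the diagonal.

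What each route buys: yours is uniform, computing $H^{*,*}(F,\Z/m)$ in terms of Galois cohomology for any field $F$ with $m$ invertible. The paper's is more economical, uses only rigidity-type input that predates the resolution of Bloch--Kato, and matches the attribution to Bloch, Suslin and Voevodsky.

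Two smaller points of presentation.

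\begin{enumerate}
\item Your claim that only $0\le p\le q$ can contribute does not follow from your first step. The cycle complex gives vanishing above the diagonal, but vanishing in negative cohomological degrees is not formal; for integral or rational coefficients it is part of the open Beilinson--Soulé conjecture. With finite coefficients it is itself an output of Beilinson--Lichtenbaum. Since you apply that isomorphism for all $p\le q$, your fourth step does cover it; the ordering of the argument should just reflect this.
\item You use $p$ both for the prime and for the cohomological degree, and these should be separated.
\end{enumerate}

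Your multiplicativity argument and your identification of $\tau$ through $\Z(1)\simeq\mathbb{G}_m[-1]$ are fine, and they spell out more than the paper does.
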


\begin{rem}
This explicit formulation is surprisingly difficult to locate in the literature, and its attribution requires some explanation. Rather than give a proof, we briefly indicate how it emerges from several developments.  Totaro has suggested to us that the statement was already known to Bloch by the time of his paper \cite{Bloch1986}, which introduced the first workable definition of integral motivic cohomology via higher Chow groups. Although \cite[\S 11]{Bloch1986} sketches the relationship between higher Chow groups with finite coefficients and étale cohomology (see also the acknowledgements, where the case of separably closed fields is discussed explicitly), no general statement of the theorem appears there. Some arguments in Bloch's paper were later found to be incomplete, prompting Bloch's correction \cite{Bloch1994}.

A proof can already be extracted from the work of Suslin and Voevodsky. Indeed, \cite[Theorem 8.3]{SuslinVoevodsky1996} identifies singular cohomology of complex algebraic varieties with finite coefficients in cycle-theoretic terms, and combining this result with the Artin--Grothendieck comparison theorem yields the statement above.

From the modern perspective, however, the result is most naturally viewed through motivic cohomology. Voevodsky proved that Bloch's higher Chow groups agree with motivic cohomology, first assuming resolution of singularities (see, e.g., \cite[Theorem 19.1]{MVW}) and shortly thereafter unconditionally \cite[Corollary 2]{Voevodsky2002HigherChow}. Finally, Suslin established \cite[Corollary 4.3]{Suslin2011HigherChow} that higher Chow groups with finite coefficients agree with étale cohomology. Together these results immediately yield the theorem.
\end{rem}

When combined with the suspension isomorphism in motivic cohomology, this result implies that we can easily understand the $\mod p$ motivic cohomology of any motivic sphere.  In essence, we axiomatize the conclusion of these computations, isolating a class of spaces whose behavior under complex realization is tightly controlled; we formulate the definition first in cohomological terms.

\begin{defn}
	\label{defn:somaticlevel}
	A motivic space $\mathscr{X} \in \Spc(\C)_*$ has somatic level $\geq r$ if and only if for all $m > 0, b \ge 0$ the map \[ \bar H^{2b+r'}(\mathscr{X}, \Z/m(b)) \longrightarrow \bar H^{2b+r'}(r_\C(\mathscr{X}), \Z/m) \] is an isomorphism for $r' \le r$ and a monomorphism for $r'=r+1$.	
\end{defn}	

The above definitions can be reformulated via motivic spaces when combined with Voevodsky's motivic Dold-Thom theorem.  More precisely, recall that Voevodsky showed motivic Eilenberg--Mac Lane spaces with finite coefficients are sent, via complex realization, to classical Eilenberg--Mac Lane spaces with finite coefficients; this observation also gives another interpretation of the cycle class map from motivic cohomology to singular cohomology, which we will revisit in Section~\ref{ss:hodgetheory}.  

\begin{theorem}[Voevodsky {\cite[Corollary 3.48]{VMEM}}]
	\label{thm:VoevodskyDoldKan}
	If $A$ is an abelian group and $q \geq i$ are integers, then 
	\[
	r_{\cplx}(K(A(q),i)) \cong K(A,i).
	\]
\end{theorem}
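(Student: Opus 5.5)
The plan is to reduce to the weight-$q$, degree-$2q$ case, where Voevodsky's model of motivic Eilenberg--Mac Lane spaces is a motivic analogue of the Dold--Thom infinite symmetric product. That case should then realize to the classical Dold--Thom model.

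First I would recall the model. Write $T^q = \mathbb{A}^q/(\mathbb{A}^q\setminus 0)\simeq S^{2q,q}$. Then $K(\Z(q),2q)$ is the motivic localization of the free presheaf with transfers $\Z_{tr}(T^q)$. By Suslin--Voevodsky, in characteristic $0$ this agrees, up to motivic equivalence, with the group completion of the monoid $\bigsqcup_n \Sym^n$. The symmetric powers are taken of a projective model, e.g. ${\mathbb P}^q/{\mathbb P}^{q-1}$, and the monoid structure is addition of effective cycles. For $i \le 2q$ I would set $K(A(q),i)\simeq \Omega^{2q-i}K(A(q),2q)$, and I expect the hypothesis $q\ge i$ to be exactly what puts us in this range.

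The core step is to compute $r_\cplx$ on this model. Realization is a left Kan extension, so it commutes with colimits. Moreover $(\Sym^n Y)^{an}=\Sym^n(Y^{an})$ for quasi-projective $Y$. Hence the realization of $\colim_n \Sym^n({\mathbb P}^q/{\mathbb P}^{q-1})$ is $\Sym^\infty S^{2q}$. Realization preserves colimits and finite products, so it preserves the bar construction, and therefore commutes with group completion. The classical Dold--Thom theorem then gives $\Sym^\infty S^{2q}\simeq K(\Z,2q)$, so $r_\cplx K(\Z(q),2q)\simeq K(\Z,2q)$. The loop-commutation property in the earlier Remark applies to connected bases. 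Applying it inductively to $K(\Z(q),j)$ for $j\ge 1$ gives $r_\cplx K(\Z(q),i)\simeq K(\Z,i)$, with the case $i=0$ handled by one more loop of a connected space. For general $A$, I would write $A$ as a filtered colimit of finitely generated groups, since realization commutes with filtered colimits. Any finitely generated group has a free resolution $0\to\Z^a\to\Z^b\to A\to 0$, which produces a fiber sequence $K(\Z^a(q),i)\to K(\Z^b(q),i)\to K(A(q),i)$ of Eilenberg--Mac Lane spaces. Realization preserves it because the base is connected, after shifting $i$ up by one if necessary.

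The main obstacle is the comparison in the core step. One has to show that the motivically localized object with transfers is correctly modelled, before localization, by a colimit of symmetric powers of smooth or projective varieties. These symmetric powers are singular, so one must check that their realization is the naive analytic space, which requires a cdh/h-descent argument for $r_\cplx$ on singular varieties. One must also check that motivic group completion matches the topological one. I would first try to cite Suslin--Voevodsky's identification of $\Z_{tr}$ of a projective variety with the group completion of its Chow monoid, and handle the singularities by proper descent for the analytic realization.
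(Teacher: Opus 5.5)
The paper gives no proof of this statement; it only cites Voevodsky. Your proposal has a genuine gap in the iterated looping step, and it is fatal for the printed range $q\ge i$.

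\textbf{Where the looping breaks.} The Remark you invoke requires the base to be $\aone$-connected. Motivic Eilenberg--Mac Lane spaces are much less connected than their topological counterparts. The sheaf $\pi_n K(A(q),j)$ is the Nisnevich sheaf associated with $U\mapsto H^{j-n,q}(U,A)$. This vanishes for $n<j-q$, because motivic cohomology of fields vanishes above the weight, but not below that. So $K(A(q),j)$ is $(j-q-1)$-connected and in general no better; for instance $\pi_0K(\Z(q),q)=\mathcal{K}^M_q\neq 0$.

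Consequently $K(A(q),j)$ is in general connected only when $j>q$. Your induction downward from $K(\Z(q),2q)$ therefore stops at $K(\Z(q),q)=\Omega K(\Z(q),q+1)$. Your free-resolution step for general $A$ hits the same wall, since its base, $K(A(q),i)$ or $K(\Z^b(q),i+1)$ after shifting, must be connected.

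Granting your core Dold--Thom step, what the argument actually proves is $r_\cplx K(A(q),i)\simeq K(A,i)$ for $i\ge q$: delooping handles $i\ge 2q$ and looping handles $q\le i\le 2q$. This meets the printed hypothesis $q\ge i$ only at $i=q$. The operative constraint is $i\ge q$, not $i\le 2q$, so your expectation that ``$q\ge i$ puts us in this range'' has the inequality backwards.

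\textbf{Below the weight no argument can work for integral coefficients.} Since $\Z(1)\simeq\mathcal{O}^\times[-1]$, one has $K(\Z(1),1)\simeq\Gm$, which is discrete. Hence $K(\Z(1),0)\simeq\Omega\Gm\simeq *$, whose realization is a point. On the other hand, $\Omega r_\cplx(\Gm)=\Omega\cplx^\times\simeq\Z=K(\Z,0)$. This is exactly where your ``one more loop of a connected space'' for $i=0$ fails. It also shows the statement as printed is false for $(A,q,i)=(\Z,1,0)$, so the inequality cannot be meant literally.

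Every use the paper makes of the theorem lies in the range $i\ge 2q$, and your argument covers that range once connectivity is tracked correctly. The uses are:
\begin{itemize}
\item $K(\Z(n),2n)$ for the cycle class map;
\item $BK(\Z/m(b),2b+r)$ for somatic level;
\item $K(\Q(j),2j)$ for $BSU(r)$.
\end{itemize}

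If you want anything below the weight, it holds only for torsion coefficients and needs a different input. By Beilinson--Lichtenbaum, multiplication by $\tau^{q-i}$ gives an equivalence $K(\Z/m(i),i)\to K(\Z/m(q),i)$ over $\cplx$ for $i\le q$. This reduces to the boundary case weight $=$ degree, which your looping does reach.
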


While Definition~\ref{defn:somaticlevel} is phrased cohomologically, the formal properties of spaces of fixed somatic level are more naturally expressed in terms of mapping spaces. This reformulation relies on the adjunction underlying complex realization. To this end, recall that the complex realization functor $r_{\cplx}$ admits a right adjoint. One way to describe this adjoint is as the functor sending a topological space to the corresponding constant presheaf on its singular complex. By a slight abuse of notation, we denote this right adjoint by $\Sing(-)$.  Setting 
\[
\L_{\cplx} := \Sing \circ r_{\cplx},
\] 
the adjunction provides a natural transformation $\Id \to \L_{\cplx}$.  In view of Theorem~\ref{thm:VoevodskyDoldKan}, this provides an equivalent formulation of somatic level entirely in terms of mapping properties.

More generally, if $\mathscr{K}$ is any motivic space, then asking for the comparison map
\[
\Map(\mathscr{X},\mathscr{K}) \longrightarrow
\Map(r_{\cplx}\mathscr{X},r_{\cplx}\mathscr{K})
\]
to be an equivalence is equivalent, by adjunction, to asking that the map
\[
\Map(\mathscr{X},\mathscr{K}) \longrightarrow \Map(\mathscr{X},\L_{\cplx}\mathscr{K})
\]
be an equivalence. Thus comparison with topology can be reformulated entirely within the motivic homotopy category. Moreover, this point of view is formally much more flexible: one may replace a single object $\mathscr{K}$ by any collection of objects, or relativize the construction with respect to a morphism $\mathscr{A}\to\mathscr{X}$.

Applying this observation to the collection of maps
\[
BK(\Z/m(b),2b+r)\longrightarrow
\L_{\cplx}BK(\Z/m(b),2b+r),
\]
and using Theorem~\ref{thm:VoevodskyDoldKan}, we conclude that a motivic space $\mathscr{X}$ has somatic level $\ge r$ if and only if
\[
\Map\!\left(
\mathscr{X},
\fib\!\left(
BK(\Z/m(b),2b+r) \to \L_{\cplx}BK(\Z/m(b),2b+r)\right) \right)
\]
is contractible for every $m>0$ and $b\ge0$. This reformulation is considerably better suited to studying the formal properties of spaces of fixed somatic level.

\begin{rem}[Terminological clarification]
The adjective {\em somatic} derives from the Greek {\em soma}, meaning "body." We chose it to suggest two complementary contrasts.

First, it is intended as a counterpart to motivic. Whatever Grothendieck's original intentions—Manin famously associated the word motif with C\'ezanne \cite[p. 440]{Manin1968}, while Milne argues that no such connection was intended \cite{MilneMotivesOnline}—the adjective motivic has come to evoke a highly structured and idealized world of abstractions. By contrast, somatic is meant to suggest something concrete, corporeal, and ultimately visible through complex realization.

Second, and more importantly for this paper, the terminology reflects a process of reification. As will become apparent in Section~\ref{ss:continuity}, arbitrary continuous functions came to occupy an increasingly abstract position within mathematics during the nineteenth century, while algebraic functions remained distinguished by their finite descriptions. A high somatic level expresses the extent to which this abstract topological information can nevertheless be realized by finite algebraic data. Theorems~\ref{thmintro:polynomialrep} and \ref{thmintro:cellularalgebraizability} illustrate this philosophy, but the notion itself is intended to apply much more broadly.
\end{rem}

We would now like to grow the class of spaces of fixed somatic level.  To this end, observe that essentially by definition the notion behaves well in cofiber sequences.  Bearing that in mind, we make the following definition.

\begin{defn}
	\label{defn:evencellular}
	We will say that a motivic space $\mathscr{X}$ is {\em even cellular} if $\mathrm{M}(\mathscr{X})$ is isomorphic to a sum of objects of the form $\Sigma^{2n,n}H\Z$.  If $\mathscr{M}$ is an $MGL$-module, we will say that {\em $\mathscr{M}$ is $MGL$-pure Tate} if it can be written as a filtered colimit of finite sums of the form $\Sigma^{2n,n} \mathrm{MGL}$.  Finally, we say that a motivic space $\mathscr{X}$ is $\mathrm{MGL}$-pure Tate if $\mathrm{MGL}[\mathscr{X}]$ is so.
\end{defn}

Effectively by construction (via the suspension isomorphism in motivic cohomology) one has the following result about the somatic level of even cellular spaces.

\begin{proposition}
	If $\mathscr{X} \in \Spc(\cplx)_*$ is even cellular, then $\mathscr{X}$ has somatic level $\geq 1$.
\end{proposition}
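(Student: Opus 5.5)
Since $\mathrm{M}(\mathscr{X}) \simeq \bigoplus_{\alpha} \Sigma^{2n_\alpha,n_\alpha} H\Z$, we can compute both sides of the comparison map in Definition~\ref{defn:somaticlevel} directly: the motivic side from this splitting, the topological side from a parallel splitting of $r_{\cplx}\mathscr{X}$. First, the motivic side. Since $H\Z/m$ is an $H\Z$-module, reduced motivic cohomology with $\Z/m$ coefficients is computed by maps out of the motive:
\[
\bar H^{s}(\mathscr{X},\Z/m(b)) \cong \prod_\alpha H^{s-2n_\alpha,\,b-n_\alpha}(\Spec\cplx,\Z/m).
\]
Theorem~\ref{thm:BlochKatoconj} gives $H^{*,*}(\Spec\cplx,\Z/p)\cong \mathbb{F}_p[\tau]$, and the Bockstein long exact sequences extend this to $H^{*,*}(\Spec\cplx,\Z/m)\cong (\Z/m)[\tau]$, with $\tau$ a chosen primitive $m$-th root of unity. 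The key feature is that this ring is concentrated in cohomological degree $0$ and weights $\ge 0$. Taking $s=2b+r'$, the summand indexed by $\alpha$ is therefore nonzero only if $r'=0$ and $n_\alpha\le b$, in which case it is $\Z/m\cdot\tau^{b-n_\alpha}$.

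Next, the topological side. Betti realization is compatible with motives: there is a realization $\mathrm{DM}(\cplx)\to D(\Z)$ sending $\mathrm{M}(\mathscr{X})$ to $C_*(r_{\cplx}\mathscr{X};\Z)$ and $\Sigma^{2n,n}H\Z$ to $\Z[2n]$. I would take this from the monoidality of the stable realization $\SH(\cplx)\to\SH$, together with the fact that $r_{\cplx}H\Z\simeq H\Z$, which is the stable form of Theorem~\ref{thm:VoevodskyDoldKan}. Consequently $\tilde H_*(r_{\cplx}\mathscr{X};\Z)$ is free on classes in degrees $2n_\alpha$, and $\bar H^{s}(r_{\cplx}\mathscr{X},\Z/m)\cong\prod_\alpha H^{s-2n_\alpha}(\mathrm{pt},\Z/m)$. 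This vanishes unless $s=2n_\alpha$ for some $\alpha$.

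Now compare the two sides. The comparison map is induced by realization, which sends $\tau$ to a generator; this is the Dold--Thom description of the cycle class map. It is therefore compatible with the two splittings. In degree $r'=0$, the motivic side is $\prod_{n_\alpha\le b}\Z/m\,\tau^{b-n_\alpha}$ and the topological side is $\prod_{n_\alpha=b}\Z/m$.

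This exposes a discrepancy. For $n_\alpha<b$ the motivic classes $\tau^{b-n_\alpha}$ map to topological classes in degree $2n_\alpha$, not $2b$. So if Definition~\ref{defn:somaticlevel} is read with the same topological degree $2b+r'$ on both sides, the statement fails already for $\mathscr{X}=S^{0}$. The intended reading must let the topological side be sensitive to the weight. The natural formulation is mapping-space-theoretic, as in the reformulation via $\L_{\cplx}BK(\Z/m(b),2b+r)$: we ask that maps into $\fib(K\to\L_{\cplx}K)$ vanish. I would verify the statement in that form, reducing via $\mathrm{M}(\mathscr{X})$ to the case of the spheres $S^{2n,n}$, where it is exactly the computation above. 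The key steps are: (i) the negative-degree groups (cohomological degree $<0$ in the sense $r'<0$) vanish on both sides, (ii) there is an isomorphism for $r'=0$ after matching $\tau$-powers, and (iii) for $r'=1$ both sides vanish, so injectivity is trivial.

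The main obstacle is the compatibility of the motivic splitting with realization: the decomposition of $\mathrm{M}(\mathscr{X})$ must realize to a decomposition of $H\Z\wedge r_{\cplx}\mathscr{X}$, and the comparison map must be identified summand-by-summand with multiplication by powers of $\tau\mapsto 1$. I would handle this by working entirely with $H\Z$-module maps and using that $\Sigma^{2n,n}H\Z\to\L_{\cplx}$-type comparisons on spheres reduce to Theorem~\ref{thm:BlochKatoconj} plus the suspension isomorphism.
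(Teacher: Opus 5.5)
Your strategy is exactly what the paper's ``via the suspension isomorphism'' means: split $\mathrm{M}(\mathscr{X})$, compute summand by summand from Theorem~\ref{thm:BlochKatoconj}, and use that realization is a functor on $H\Z$-modules, so the comparison map respects the splitting. However, a bidegree error derails everything after that point.

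For a summand $\Sigma^{2n,n}H\Z$ one has $\bar H^{2b+r'}(\Sigma^{2n,n}H\Z,\Z/m(b))\cong H^{2(b-n)+r',\,b-n}(\Spec\cplx,\Z/m)$. The cohomological degree on the point is therefore $2(b-n)+r'$, not $r'$. Since $H^{*,*}(\Spec\cplx,\Z/m)$ is concentrated in degree $0$ and weights $\geq 0$, this group is nonzero only when $r'=2(n-b)\le 0$.

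So $\tau^{b-n_\alpha}$ lies in $\bar H^{2n_\alpha}(\mathscr{X},\Z/m(b))$, not in $\bar H^{2b}$. The degree $2n_\alpha$ is exactly where the corresponding topological summand $H^{2b+r'-2n_\alpha}(\mathrm{pt},\Z/m)$ is nonzero.

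Your ``discrepancy'' is therefore an artifact of this slip. For $S^0$ one has $\bar H^{2b}(S^0,\Z/m(b))=H^{2b,b}(\Spec\cplx,\Z/m)=0$ for $b>0$, and in fact $S^0$ satisfies the comparison in every degree.

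Passing to the mapping-space formulation does not change anything. The paper asserts it is equivalent, and indeed $\pi_i\Map_*(\mathscr{X},BK(\Z/m(b),2b+r))\cong\bar H^{2b+r+1-i}(\mathscr{X},\Z/m(b))$, with the same weight and the same degrees. Contractibility of the fiber amounts to isomorphisms on $\pi_i$ for $i\ge 1$ and injectivity on $\pi_0$, which are precisely the stated conditions.

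Two of your verification steps are also wrong as written. Step (i) is false: for even $r'<0$ both sides can be nonzero. Step (iii) checks injectivity at $r'=1$, which is the requirement for level $\ge 0$; level $\ge 1$ needs isomorphisms for all $r'\le 1$ and injectivity at $r'=2$.

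With the degrees corrected, the literal definition is verified directly, and your reformulation is unnecessary:
\begin{itemize}
\item For even $r'\le 0$, both sides are $\prod_{n_\alpha=b+r'/2}\Z/m$. The comparison map sends $\tau^{b-n_\alpha}$ to a unit, so it is an isomorphism.
\item For odd $r'$, in particular $r'=1$, both sides vanish.
\item For $r'=2$, the motivic side is a product of copies of $H^{0,-1}(\Spec\cplx,\Z/m)=0$, so injectivity is automatic.
\end{itemize}
The topological side at $r'=2$ need not vanish (take $S^{2,1}$ and $b=0$), which is why level $\ge 1$ is the sharp statement. Your remaining inputs are fine at the level of detail of the paper: $H^{*,*}(\Spec\cplx,\Z/m)\cong\Z/m[\tau]$, compatibility of realization with the splitting, and $\tau$ realizing to a unit.
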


Of course, there are numerous ``easy'' examples of even cellular motivic spaces, e.g., those obtained from projective space or Grassmannians (more generally, those nonsingular complex projective varieties admitting a paving by affine spaces).  However, and relevant to our future discussions, there is an extremely non-trivial example provided by Bachmann and Hopkins' resolution of the Wilson space hypothesis (which asserts that $\Omega^{\infty}_{\pone}\Sigma^{2i,i}\mathrm{MGL}$ is even cellular); we phrase the result in the following way. 

\begin{theorem}[Bachmann, Hopkins]
	\label{thm:wilsonspacehypoth}
	If $\mathscr{M}$ is an $\mathrm{MGL}$-pure Tate object, then $\mathrm{MGL}[\Omega^{\infty}_{\pone}\mathscr{M}]$ is again $\mathrm{MGL}$-pure Tate.  Moreover, $r_{\cplx}\Omega_{\pone}^{\infty} \mathscr{M} = \Omega^{\infty}r_{\cplx}\mathscr{M}$.
\end{theorem}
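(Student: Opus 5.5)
The statement has two parts: the purity assertion for $\mathrm{MGL}[\Omega^{\infty}_{\pone}\mathscr{M}]$, and the compatibility of complex realization with $\pone$-infinite loop spaces. I would first reduce the purity assertion to the basic case $\mathscr{M} = \Sigma^{2n,n}\mathrm{MGL}$, the Wilson space hypothesis proper. Both $\Omega^{\infty}_{\pone}$ and $\mathrm{MGL}\wedge\Sigma^{\infty}(-)$ commute with filtered colimits. The former holds because spheres are compact, and the latter because it is a left adjoint. Pure Tate objects are closed under filtered colimits. So it suffices to treat finite sums $\bigoplus_i \Sigma^{2n_i,n_i}\mathrm{MGL}$. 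For these, $\Omega^{\infty}_{\pone}$ turns the sum into a finite product of spaces. An MGL-Künneth formula then shows that $\mathrm{MGL}[\mathscr{X}\times\mathscr{Y}]$ is the tensor product over $\mathrm{MGL}$ of the factors, as long as these are free $\mathrm{MGL}$-modules on even Tate cells. Tensor products of pure Tate modules are pure Tate, so we are reduced to a single $\Omega^{\infty}_{\pone}\Sigma^{2n,n}\mathrm{MGL}$.

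For the basic case, I would proceed by comparison with topology, following the classical Wilson argument. Topologically, $\Omega^{\infty}\Sigma^{2n}MU$ has even, torsion-free homology, and hence free $MU$-homology. The plan has three steps.

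First, I would construct a motivic cell-like filtration on $\Omega^{\infty}_{\pone}\Sigma^{2n,n}\mathrm{MGL}$. The natural candidate is the motivic Postnikov or slice tower of $\Sigma^{2n,n}\mathrm{MGL}$, whose slices are sums of $\Sigma^{2m,m}H\Z$ (Hopkins--Morel--Hoyois). Applying $\Omega^{\infty}_{\pone}$ gives a tower of principal fibrations with fibers products of motivic Eilenberg--Mac Lane spaces $K(\Z(m),2m)$.

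Second, I would compute the $\mathrm{MGL}$-homology, or mod $p$ motivic homology, of these fibers. Using Voevodsky's description of motivic Steenrod operations and the motivic cohomology of $K(\Z/p(m),2m)$, I would check that it is free on even Tate classes.

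Third, I would run a motivic Eilenberg--Moore or bar spectral sequence up the tower. Over $\cplx$, Theorem~\ref{thm:BlochKatoconj} together with Theorem~\ref{thm:VoevodskyDoldKan} identifies the mod $p$ motivic homology of each stage with the topological one tensored with $\mathbb{F}_p[\tau]$. The classical Wilson theorem asserts degeneration and evenness topologically. Freeness over $\mathbb{F}_p[\tau]$ (no $\tau$-torsion) would then lift this degeneration motivically. Finally, I would assemble the mod $p$ statements for all $p$ and the rational statement, where everything splits by Theorem~\ref{thm:rationalsplittingBSLr}-type arguments. Together these give an integral $\mathrm{MGL}$-module splitting into cells $\Sigma^{2m,m}\mathrm{MGL}$, using a Nakayama-type lemma for connective $\mathrm{MGL}$-modules.

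For the second assertion, I would use that $r_{\cplx}$ preserves colimits and, on connected spaces, loop spaces and fiber sequences. The space $\Omega^{\infty}_{\pone}\mathscr{M}$ is a colimit of $\Omega^{2n,n}$ applied to levels. Loops in the $\Gm$-direction do not obviously commute with realization. So instead I would build $\Omega^{\infty}_{\pone}\Sigma^{2n,n}\mathrm{MGL}$ as the limit of the Postnikov-type tower above, whose stages realize correctly by Theorem~\ref{thm:VoevodskyDoldKan} and the fiber-sequence compatibility. I would then use the even-cellular homology computation to verify that realization commutes with this limit. Concretely, the connectivity of the tower's fibers grows, so the induced maps on homology of realizations are isomorphisms in a range.

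The main obstacle is the third step: showing the bar spectral sequence degenerates motivically with no $\tau$-torsion. This is exactly the content of the Wilson space hypothesis. I would attack it by comparing with the topological spectral sequence via the cycle class map and using the weight grading to force differentials to vanish.
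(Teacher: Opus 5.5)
The paper gives no proof of this statement; it imports it as a black box from Bachmann--Hopkins. Your proposal therefore has to stand on its own, and it has a genuine gap.

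\textbf{The purity part.} Your reduction to $\mathscr{M}=\Sigma^{2n,n}\mathrm{MGL}$ is fine. In fact the K\"unneth step needs no freeness, since $\mathrm{MGL}\wedge\Sigma^{\infty}(\mathscr{X}\wedge\mathscr{Y})\simeq\mathrm{MGL}[\mathscr{X}]\otimes_{\mathrm{MGL}}\mathrm{MGL}[\mathscr{Y}]$. The basic case, however, breaks at your second step.

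\begin{itemize}
\item The layers of the tower are products of $K(\Z(m),2m)$, and for $m\ge 2$ these are \emph{not} pure Tate. Their complex realization is $K(\Z,2m)$. Its integral homology has torsion, and its mod $p$ cohomology contains odd classes such as $Sq^3\iota_4$ or $\beta P^1\iota_{2m}$. Voevodsky's computation gives the motivic versions, e.g.\ $Sq^3\iota$ in bidegree $(7,3)$, which lies off the line $(2w,w)$. If $\mathrm{MGL}[K(\Z(m),2m)]$ were pure Tate, realizing would make $H_*(K(\Z,2m);\Z)$ even and torsion-free, which is false.
\item This undermines your third step. Topologically, the spectral sequences along the Postnikov tower of $\Omega^{\infty}\Sigma^{2n}MU$ do \emph{not} degenerate: the odd classes coming from the layers must be killed by differentials. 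Wilson's theorem is a statement about the abutment only. Its classical proofs (Wilson's induction over $BP\langle n\rangle$ in a stable range, and Ravenel--Wilson's Hopf ring and bar spectral sequence argument) are organized precisely to avoid Eilenberg--Mac Lane layers.
\item Once the topological differentials are nonzero, a $\tau$-free motivic $E_2$-page does not exclude motivic differentials $d_r(x)=\tau^k y$, which create $\tau$-torsion. So no weight-grading argument can force the differentials to vanish.
\item Even a $\tau$-free answer with even realization would not give purity, which requires generators exactly in bidegrees $(2w,w)$. For example, $\Sigma^{2,0}\mathrm{MGL}$ is $\tau$-free with even realization but is not pure Tate.
\end{itemize}

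What is missing is an actual mechanism: a filtration or generating structure whose pieces already lie on the Chow line, so that evenness forces collapse. As written, you place the entire content of the theorem in the step you flag as the obstacle, and the method you suggest for it cannot work.

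\textbf{The realization part.} Your tower idea can be made to work, and it does not need the purity computation; you should not route it through that. But knowing that the layers realize correctly and become increasingly connected is not enough, because $r_{\cplx}$ does not commute with limits. You need a connectivity estimate for the whole fiber $\Omega^{\infty}_{\pone}f_q\Sigma^{2n,n}\mathrm{MGL}$ of the map $\Omega^{\infty}_{\pone}\Sigma^{2n,n}\mathrm{MGL}\to\Omega^{\infty}_{\pone}(\Sigma^{2n,n}\mathrm{MGL}/f_q)$.
\begin{itemize}
\item For instance, $f_q\Sigma^{2n,n}\mathrm{MGL}$ should lie in $\Sigma^{2q,q}\SH(\cplx)^{\mathrm{veff}}$; I expect this from the Spitzweck/Hoyois analysis of the slice tower of $\mathrm{MGL}$ (via Hopkins--Morel), but you would need to verify it. Granting it, $f_q\Sigma^{2n,n}\mathrm{MGL}$ is $q$-connective in the homotopy $t$-structure.
\item Then its $\Omega^{\infty}_{\pone}$ is a $q$-connective motivic space, and $r_{\cplx}$ preserves $q$-connectivity.
\item Combine this with induction up the finite stages. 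There the layers are handled by Theorem~\ref{thm:VoevodskyDoldKan}, and the bases $K(A(q),2q+1)$ of the principal fibrations are connected.
\end{itemize}
Together these show that $r_{\cplx}\Omega^{\infty}_{\pone}\mathscr{M}\to\Omega^{\infty}r_{\cplx}\mathscr{M}$ is an equivalence.
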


In the next section, we show how to combine this observation with an approximation scheme to deduce our main result.

\subsection{Homotopical descent}
\label{ss:homotopicaldescent}
Returning to the sketch at the beginning of this section, we now aim to develop various homotopical approximation schemes in motivic homotopy theory.  We begin by phrasing the setup in terms perhaps more familiar to algebraic geometers.  

If $R \to S$ is a ring map, then descent theory asks for a characterization of the essential image of the extension of scalars from $R$-modules to $S$-modules.  Extension of scalars and the forgetful functor form an adjoint pair, and any adjoint pair gives rise to a comonad (see \cite{Grothendieck1960DescenteI} for the algebro-geometric formulation, and \cite{Beck1967Triples} for a categorical formulation).   The situation is similar, but more complicated, when we consider the adjunction:
\[
\xymatrix{
L: \Spc(k)_* \ar@<.3ex>[r] & \ar@<.3ex>[l] \mathrm{Mod}(\mathrm{MGL})
}
\]
where $L\mathscr{X} := \mathrm{MGL} \wedge \Sigma^{\infty} \mathscr{X}$ and the adjoint is given by $\Omega^{\infty}_{\pone}$.  In the case above, $\mathrm{S} := L\Omega^{\infty}_{\pone}$, viewed as an endofunctor of $\mathrm{Mod}(\mathrm{MGL})$, can be equipped with the structure of a comonad.  For any motivic space $\mathscr{X}$, the object $L\mathscr{X} \in \mathrm{Mod}(\mathrm{MGL})$ carries the structure of a coalgebra for this comonad.  

In general, $\mathscr{X}$ cannot be recovered from $L\mathscr{X}$ equipped with this additional structure, mirroring the situation in classical ring theory.   Indeed, the functor $L$ might invert maps of motivic spaces; we will refer to such maps as $L$-equivalences.  The collection of such maps is well-behaved enough that we may form the localization $\hat{L}\Spc(k)_*$ at the $L$-equivalences, and evidently the functor $\Spc(k)_* \to \mathrm{CoAlg}_{\mathrm{S}}(\mathrm{Mod}(\mathrm{MGL}))$ factors through $\hat{L}\Spc(k)_*$.

Using the coalgebra structure of $L\mathscr{X}$, we can build a cosimplicial resolution of $\mathscr{X}$ via the usual cobar construction; in the classical ring-theoretic setting above, the resulting resolution is what is usually called the Amitsur complex.  In this context, we may form the cosimplicial motivic space:
\[
\mathrm{cobar}(\mathscr{X})^n := (\Omega^{\infty}_{\pone}L)^{n+1} \mathscr{X}.
\]
The object $\mathrm{cobar}(\mathscr{X})$ is by definition an unstable Adams--Novikov resolution of $\mathscr{X}$.  Essentially by construction, the totalization of this space is local for $L$-equivalences, and we may ask whether the associated canonical map $\hat{L}\mathscr{X} \to \operatorname{Tot} \mathrm{cobar}(\mathscr{X})$ is an equivalence.  More strongly, we would like a ``workable'' description of the relevant localization.  By analogy with the situation in ordinary algebraic topology, such an ideal description would be in terms of something related to profinite completion of $\mathscr{X}$.  

If $\mathscr{X} \in O(S^{4,2})$, then a description of ``what the cobar resolution converges to'' can be found in \cite[Theorem 6.51(2)]{BachmannEngelmannMattis2025Monadic} (note that the hypotheses are satisfied by \cite[Definition 6.32, Remark 6.33]{BachmannEngelmannMattis2025Monadic}); in particular, this relies on Theorem~\ref{thm:freudenthal}.  Combining these observations with the discussion of somatic level, and Theorem~\ref{thm:wilsonspacehypoth} one can establish comparison results for $\mathscr{X} \in O(S^{4,2})$ such that $\mathrm{MGL}[\mathscr{X}]$ is $\mathrm{MGL}$-pure Tate.  Rather than describing this procedure in general, we indicate how it fits into the general architecture of the proof of the following result, which provides a special case of Theorem~\ref{thmintro:cellularalgebraizability}.  

\begin{theorem}
	\label{thm:vbtrivdet}
	If $X$ is a connected smooth complex variety that is even cellular, then
	\[
	[X,BSL_r]_{mot} \longrightarrow [X^{an},BSU(r)]
	\]
	is a bijection.
\end{theorem}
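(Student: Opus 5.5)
We prove Theorem~\ref{thm:vbtrivdet} by an arithmetic fracture argument: treat the rational and profinite pieces of $BSL_r$ separately, then glue.

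Since $BSL_r \in O(S^{4,2})$ (Example~\ref{ex:cellularclassBSLr}), there is an arithmetic fracture square expressing $BSL_r$ as the pullback of $\L_{\Q}BSL_r$ and $\L^{\wedge}BSL_r$ over $\L_{\Q}\L^{\wedge}BSL_r$. On the topological side, the classical Sullivan fracture square does the same for the simply connected finite-type space $BSU(r)$. Complex realization gives a map between these two squares. Applying $\Map(X,-)$ and using that mapping spaces out of $X$ preserve pullbacks, it suffices to show that the comparison map
\[
\Map(X,\mathscr{Y}) \longrightarrow \Map(X^{an}, r_{\cplx}\mathscr{Y})
\]
is an equivalence for each of the three corners $\mathscr{Y}$, with $r_{\cplx}\mathscr{Y}$ read as the corresponding localization or completion of $BSU(r)$. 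One must also check that these realizations match the topological fracture pieces. Once this holds, passing to $\pi_0$ gives the bijection.

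\textbf{Rational corners.} By Theorem~\ref{thm:rationalsplittingBSLr}, $\L_{\Q}BSL_r \simeq \prod_{i=2}^r K(\Q(i),2i)$, and likewise $BSU(r)_{\Q} \simeq \prod K(\Q,2i)$. So the comparison reduces to comparing motivic cohomology $H^{2i-j,i}(X;\Q)$ with singular cohomology $H^{2i-j}(X^{an};\Q)$. Because $X$ is even cellular, $\mathrm{M}(X)$ is a sum of $\Sigma^{2n,n}H\Z$. Therefore these motivic groups are computed by $H^{*,*}(\Spec\cplx;\Q)$, which is concentrated in bidegree $(0,0)$. The realization $X^{an}$ has free even cohomology whose ranks match, since realization of the cellular splitting gives the corresponding splitting of $\Sigma^\infty X^{an}_+ \wedge H\Z$. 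Hence the rational mapping spaces agree. The corner $\L_{\Q}\L^{\wedge}$ is handled the same way, with $\mathbb{A}_f$-coefficients in place of $\Q$.

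\textbf{Profinite corner.} This is the main obstacle. Here we use the unstable Adams--Novikov cobar resolution. Since $BSL_r \in O(S^{4,2})$, \cite[Theorem 6.51(2)]{BachmannEngelmannMattis2025Monadic} identifies $\operatorname{Tot}\mathrm{cobar}(BSL_r)$ with its completion; this relies on Theorem~\ref{thm:freudenthal}. The input is that $\mathrm{MGL}[BSL_r]$ is $\mathrm{MGL}$-pure Tate, via the Chern class/Grassmannian cell structure. By Theorem~\ref{thm:wilsonspacehypoth} applied inductively, every term $(\Omega^\infty_{\pone}L)^{n+1}BSL_r$ is $\Omega^\infty_{\pone}$ of an $\mathrm{MGL}$-pure Tate module. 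Its realization is the corresponding term of the topological $MU$-based cobar resolution of $BSU(r)$, which converges to the completion of $BSU(r)$.

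We then reduce to a levelwise statement. For $\mathscr{M}$ pure Tate with finite coefficients, maps from the even cellular $X$ into $\Omega^\infty_{\pone}\mathscr{M}$ are computed by $\mathrm{MGL}$-module maps from $\mathrm{MGL}[X]$, a sum of $\Sigma^{2n,n}\mathrm{MGL}$. The comparison therefore reduces to $\pi_{2*,*}\mathrm{MGL}/m \cong \pi_{2*}MU/m$ together with the analogous statement in odd/off-diagonal degrees. The latter follows from Theorem~\ref{thm:BlochKatoconj}: $\tau$ is invertible after completion, in the sense of the somatic level comparison with weights. This shows each cosimplicial level of mapping spaces compares equivalently. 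Totalizing, and using that $\Map(X,-)$ commutes with $\operatorname{Tot}$, gives the profinite corner.

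The delicate points are the convergence and the $\lim^1$/$\pi_0$ issues in the totalization. Matching the motivic and topological cobar resolutions levelwise is exactly where Theorem~\ref{thm:wilsonspacehypoth} is essential. Connectedness of $X$ and simple connectivity of $BSU(r)$ ensure that basepoints cause no trouble in passing from pointed to unpointed maps.
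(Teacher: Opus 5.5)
Your architecture is the paper's: fracture squares for $BSL_r$ and $BSU(r)$, with the profinite corner handled by the cobar resolution and Theorem~\ref{thm:wilsonspacehypoth}. There is, however, a genuine gap in the rational corner, and it breaks the argument as you have set it up. The claim that $H^{*,*}(\Spec\cplx;\Q)$ is concentrated in bidegree $(0,0)$ is false: $H^{n,n}(\Spec\cplx;\Q)=K^M_n(\cplx)\otimes\Q$ is an enormous $\Q$-vector space for every $n\geq 1$ (e.g.\ $H^{1,1}=\cplx^\times\otimes\Q$).

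For even cellular $X$ one has $\pi_j\Map(X,K(\Q(i),2i))=H^{2i-j,i}(X;\Q)=\bigoplus_n H^{2i-j-2n,i-n}(\Spec\cplx;\Q)$. So the rational comparison map is far from an equivalence. Already for $X=\Spec\cplx$ you get $\pi_2\Map(X,K(\Q(2),4))=K^M_2(\cplx)\otimes\Q$, whereas $\pi_2K(\Q,4)=0$. Also, $\pi_1$ acquires a copy of $\cplx^\times\otimes\Q$ whenever $\mathrm{M}(X)$ has a summand $\Sigma^{2(i-1),i-1}H\Z$.

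No argument can make all three corners equivalences. Indeed, the conclusion your strategy would deliver, that $\Map(X,BSL_r)\to\Map(X^{an},BSU(r))$ is an equivalence, is false. For $X=\pone$ and $r=2$, $\pi_1$ of the pointed motivic mapping space is $[S^{3,1},BSL_2]_{mot}=[S^{2,1},SL_2]_{mot}\cong K^{MW}_1(\cplx)=\cplx^\times$, while $\pi_3 BSU(2)=0$ (cf.\ Remark~\ref{rem:polynomialreps}). Only $\pi_0$ can be compared, and the proof must be organized to extract exactly that.

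The repair is the one in the paper.
\begin{enumerate}[noitemsep,topsep=1pt]
\item Use equivalences only at the $\L^\wedge$ and $\L_\Q\L^\wedge$ corners. Both do hold by the $\tau$-tower argument: $H^{*,*}(\Spec\cplx;\Z/m)=\Z/m[\tau]$, and maps out of even cells see exactly the nonnegative powers of $\tau$, which matches topology. Note that $\tau$ is not ``invertible after completion''. Also, the adelic corner must be handled this way, not ``like the rational one''.
\item Pasting the two fracture squares then gives a Cartesian square with rows $\Map(X,BSL_r)\to\Map(X,\L_\Q BSL_r)$ and $\Map(X^{an},BSU(r))\to\Map(X^{an},\L_\Q BSU(r))$. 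Hence the left and right vertical comparison maps have equivalent fibers.
\item It therefore suffices that the rational comparison map have connected fibers. Two conditions are needed:
\begin{enumerate}[noitemsep,topsep=1pt]
\item $H^{2i,i}(X;\Q)\to H^{2i}(X^{an};\Q)$ is bijective. This is true because $H^{2m,m}(\Spec\cplx;\Q)=CH^m(\Spec\cplx)_\Q$ vanishes for $m\neq 0$.
\item $H^{2i-1,i}(X;\Q)\to H^{2i-1}(X^{an};\Q)$ is surjective. This is automatic, since the target is $0$.
\end{enumerate}
\item Connected fibers for the left vertical map then give the bijection on $\pi_0$, which is the statement of the theorem.
\end{enumerate}
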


At a high level, the proof has three logically distinct ingredients: control of profinite completion via the Wilson space hypothesis~\ref{thm:wilsonspacehypoth}, arithmetic fracture techniques in both the ordinary and motivic settings (Section~\ref{ss:stabloccomp}), and the rational splitting of \(BSL_r\) established in Theorem~\ref{thm:rationalsplittingBSLr}. These are combined by comparing the corresponding fracture squares, while the notion of somatic level developed in Section~\ref{ss:somaticlevel} provides the mechanism for propagating comparison results through the resulting diagrams.

\begin{proof}
	The space $BSL_r \in O(S^{4,2})$ (see Example~\ref{ex:cellularclassBSLr}).  Moreover, the $\mathrm{MGL}$-motive of $BSL_r$ is $\mathrm{MGL}$-pure Tate; the latter is a manifestation of a cellular decomposition of a model of $BSL_r$: it admits a model as the total space of a $\gm{}$-torsor over $\mathrm{Gr}_r$, and is homotopically realized as the fiber of a map $\mathrm{Gr}_r \to {\mathbb P}^{\infty}$; these are geometric reflections of the fact that the $\mathrm{MGL}$-cohomology of $BSL_r$ is a polynomial ring on universal Chern classes $c_2,\ldots,c_r$.  Our aim is to analyze $\Map(X,BSL_r)$ when $X$ is even cellular, and we aim to do this by comparison with complex realization.  
	
	The very definition of somatic level involves motivic cohomology with finite coefficients.  As discussed before the theorem statement, the motivic Adams--Novikov resolution converges to ``profinite completion'' (see the discussion of Section~\ref{ss:stabloccomp} for some discussion of profinite completion of motivic spaces).  In conjunction with the Wilson space hypothesis~\ref{thm:wilsonspacehypoth}, we can control the comparison map between profinite completion of the ``motivic'' mapping space and the associated ``topological'' mapping space.  Precisely, since $X$ has somatic level $\geq 0$ by the even cellularity assumption, the Wilson space hypothesis~\ref{thm:wilsonspacehypoth} can be combined with convergence of the motivic Adams--Novikov spectral sequence to deduce that the natural map:
	\[
	\Map(X,\L^{\wedge} BSL_r) \simeq \Map(r_{\cplx} X, \L^{\wedge}BSU(r)).
	\]
	(where $\L^{\wedge}$ on the RHS is the usual profinite completion for topological spaces) is an equivalence.   
	
	Granted knowledge about profinite completions and complex realization, we leverage the weak cellular class assumption on $BSL_r$ again to deduce the existence of a motivic fracture square of the form:
	\[
	\xymatrix{
	BSL_r \ar[r]\ar[d] & \L_{\Q} BSL_r \ar[d] \\
	\L^{\wedge} BSL_r \ar[r] & \L_{\Q} \L^{\wedge} BSL_r;
	}
	\]
	of course, a corresponding square exists as well replacing $BSL_r$ with $r_{\cplx} BSL_r \equiv BSU(r)$.  This square remains a fiber square after applying $\Map(X,-)$ essentially by definition.  
 	
	In a fashion similar to the analysis of profinite completion, one may establish a corresponding equivalence after $\L^{\wedge}$ is replaced by $\L_{\Q}\L^{\wedge}$.  Putting these observations together with a pasting result for Cartesian cubes, one obtains a Cartesian square of the form
	\[
	\xymatrix{
	\Map(X,BSL_r) \ar[r]\ar[d] & \Map(X,\L_{\Q} BSL_r) \ar[d] \\
	\Map(r_{\cplx}X,BSU(r)) \ar[r] & \Map(r_{\cplx}X,\L_{\Q} BSU(r)).
	}
	\]
	In this formulation, the use of the Wilson space hypothesis has been obscured, but it does clarify what remains to be shown, namely that the left hand map has connected fibers.  The existence of the diagram implies this follows from the assertion that the right hand map has connected fibers; this additional connectivity again leverages the even cellularity of $X$.  
	
	Indeed, by appeal to Theorem~\ref{thm:rationalsplittingBSLr}, we know that $\L_{\Q}BSL_r \equiv K(\Q(2),4) \times \cdots K(\Q(r),2r)$, and a similar statement holds for $BSU(r)$ (e.g., by applying Theorem~\ref{thm:VoevodskyDoldKan}).  Thus, it remains to show that, for all $i \in \Z$ 
	\[
	H^{2i - \epsilon}(X,\Q(i)) \longrightarrow H^{2i-\epsilon}(r_{\cplx}X,\Q)
	\]
	is an isomorphism for all $i$ when $\epsilon = 0$ and an epimorphism when $\epsilon = 1$.  Since $X$ is even cellular, these conditions follow immediately from an analysis of the cycle class map. 
\end{proof}

If $X$ is a smooth complex variety, then write $\mathscr{V}_r^{\circ}(X)$ for the set of isomorphism classes of rank $r$ vector bundles equipped with a fixed trivialization of the determinant; likewise, write $\mathscr{V}_r^{\circ,top}(X)$ for the set of complex topological vector bundles equipped with a (topological) trivialization of the determinant.  There is an analog of Theorem~\ref{thm:vbrepresentability} replacing the Grassmannian by $BSL_r$ (see \cite[Theorem 4.1.1]{AHWII}).  In conjunction with this result, Theorem~\ref{thm:vbtrivdet} can then be interpreted in the following way.  

\begin{cor}
	\label{cor:orientedvbcellular}
	If $X$ is a smooth affine even cellular variety, then the map
	\[
	\mathscr{V}_r^{\circ}(X) \longrightarrow \mathscr{V}_r^{top,\circ}(X)
	\]
	is bijective.
\end{cor}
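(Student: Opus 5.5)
The plan is to build a commutative square whose horizontal maps are representability bijections and whose right-hand vertical map is the bijection of Theorem~\ref{thm:vbtrivdet}. Fix a smooth affine even cellular variety $X$; we may work one connected component at a time, so assume $X$ is connected.

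\emph{Algebraic side.} The $BSL_r$ analog of Theorem~\ref{thm:vbrepresentability}, namely \cite[Theorem 4.1.1]{AHWII}, gives a bijection
\[
[X,BSL_r]_{mot} \cong \mathscr{V}_r^{\circ}(X),
\]
natural in $X$. Affineness of $X$ enters only here.

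\emph{Topological side.} I would use the classical fact that $BSU(r)$ classifies rank $r$ complex bundles together with a trivialization of the determinant, up to isomorphism. Concretely, $BSU(r)$ is the homotopy fiber of $\det: BU(r) \to BU(1)$. Pontryagin--Steenrod representability then gives
\[
[X^{an},BSU(r)] \cong \mathscr{V}_r^{\circ,top}(X).
\]
This uses only that $X^{an}$ has the homotopy type of a CW complex. To be consistent with the algebraic side, I would read ``trivialization'' as a homotopy class of trivializations; alternatively one works with isomorphism classes of pairs $(E,\phi)$ with $\phi:\det E \cong \mathbf{1}$, which the fiber description handles directly.

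\emph{Compatibility.} Next I would check that the square
\[
\xymatrix{
[X,BSL_r]_{mot} \ar[r]\ar[d] & \mathscr{V}_r^{\circ}(X) \ar[d]\\
[X^{an},BSU(r)] \ar[r] & \mathscr{V}_r^{\circ,top}(X)
}
\]
commutes, where the left map is complex realization followed by $r_{\cplx}BSL_r \simeq BSU(r)$. I would do this by naturality of universal objects. The universal algebraic pair over $BSL_r$, a bundle with trivialized determinant on a Grassmannian-type model, realizes to the universal topological pair over $BSU(r)$. Both horizontal maps are pullback of the universal pair along a classifying map. Realization of an algebraic classifying map $X \to BSL_r$ then classifies the analytification of the pulled-back bundle. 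Since the left vertical arrow is bijective by Theorem~\ref{thm:vbtrivdet}, the right vertical arrow is bijective as well.

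\emph{Main obstacle.} The step needing real care is the compatibility. The motivic representability bijection passes through $\L_{mot}$, and the comparison $BGL_r \simeq \mathrm{Gr}_r$ is not given by a literal universal bundle on a motivic space. So one has to argue with the concrete model: the $\gm{}$-torsor over $\mathrm{Gr}_r$, equivalently $\colim_N$ of the frame-type varieties. On this model the tautological bundle with its determinant trivialization exists honestly, and its analytification is the standard universal $SU(r)$-bundle up to homotopy, via the equivalence $GL_r(\cplx)\simeq U(r)$. Checking that the map $r_{\cplx}BSL_r \simeq BSU(r)$ used in Theorem~\ref{thm:vbtrivdet} is the one induced by this model should settle it. The remaining identifications are formal.
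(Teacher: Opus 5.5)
Your proposal is correct and follows the paper's route: combine the $BSL_r$ version of affine representability \cite[Theorem 4.1.1]{AHWII} with Theorem~\ref{thm:vbtrivdet} and the classification of topological bundles with trivialized determinant by $[X^{an},BSU(r)]$. The paper leaves implicit both the compatibility of the two representability bijections with complex realization and the reduction to connected components. So your extra care there fills in omitted details. For the reduction you should also record that each component of an even cellular variety is again even cellular, which holds because over $\cplx$ a direct summand of a finite sum of objects $\Sigma^{2n,n}H\Z$ is again such a sum.
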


\begin{rem}
	\label{rem:polynomialreps}
	Theorem~\ref{thmintro:polynomialrep} can be approached in a similar way.  In this case, the rational homotopy type of the ``geometric'' motivic spheres $\mathrm{Q}_{2n-1}$ and $\mathrm{Q}_{2n}$ was described in \cite[\S 5.3]{AFHLocalization}.  If $k$ is a field in which $-1$ is a sum of squares, then the fundamental class $\mathrm{Q}_{2n-1} \to K(\Z(n),2n-1)$ is an equivalence after applying $\L_{\Q}$ \cite[Theorem 5.3.3]{AFHLocalization}.  Likewise, we may consider the squaring map $K(\Z(n),2n) \to K(\Z(2n),4n)$; in that case, the fundamental class in motivic cohomology induces a map $\mathrm{Q}_{2n} \to \fib(K(\Z(n),2n) \to K(\Z(2n),4n))$ that becomes an equivalence after applying $\L_{\Q}$ \cite[Theorem 5.3.11]{AFHLocalization}.  The relevant profinite completions can be approached in the same spirit as the proofs sketched above.  
	
	The comparison map of Theorem~\ref{thmintro:polynomialrep} is not an isomorphism in general because of the presence of copies of $\cplx^\times$ in the relevant motivic homotopy groups.  For example, Morel's computations of motivic homotopy groups of motivic spheres imply that $[S^{2m-2,m-1},S^{2m-1,m-1}]_{\aone} \cong \cplx^{\times}$ \cite{MField}.  A second systematic source of copies of $\cplx^\times$ arises from the Whitehead square of the identity. These are likewise encoded in the rational fiber sequences described above (see also \cite[Theorem~3]{AWW}), and together with the preceding examples they account for all failures of bijectivity.
\end{rem}

The comparison theorems above invite a natural question: how might one have approached these problems before the advent of motivic homotopy theory? The next section deliberately adopts that perspective. Rather than viewing the comparison theorems through the lens of modern homotopy theory, we ask how the mathematical techniques available in the middle of the twentieth century might have led one to formulate and analyze the comparison questions considered here.

\section{Algebraizability problems in the ``classical'' era}
\label{s:rationalreconstruction}
The comparison theorems of Section~\ref{s:mathematics} rely on techniques that became available only much later. It is therefore natural to ask how questions of the kind posed in the introduction might have been approached using the mathematical tools available in the decades after World War II. Our aim in this section is not to reproduce modern proofs in an earlier language, but rather to reconstruct how such questions might naturally have been analyzed using the mathematics of the time.\footnote{We use the adjective "classical" only as a convenient shorthand. Mathematicians routinely apply the term to mathematics lying beyond an implicitly chosen temporal horizon, but such usage inevitably reflects the perspective of the present rather than that of historical actors. We therefore employ the term descriptively rather than as a historical periodization.}

Questions of the kind posed in the introduction were actively investigated between roughly 1950 and 1980, although they appeared in a variety of formulations. Section~\ref{s:genealogy} will return to the historical processes by which such questions became meaningful; for the moment, we deliberately adopt the mathematical viewpoint of the period and ask what its techniques could reveal.

Section~\ref{ss:linebundlecomparison} analyzes algebraizability of line bundles. The exponential sheaf sequence, the divisor--line bundle correspondence, resolution of singularities, and some complementary facts from Hodge theory conspire to give a nearly complete description of the image of the comparison map in this setting.  Already in this simplest case, the comparison map can fail to be either injective or surjective.

In Section~\ref{ss:hodgetheory} we analyze vector bundles of higher rank. Chern classes and the topology of affine varieties yield an ``easy''/``hard'' dichotomy: for varieties of low dimension, obstruction theory reduces the comparison problem essentially to the case of line bundles, whereas in higher dimensions Chern classes no longer contain sufficient information.

Section~\ref{ss:quadraticmaps} concerns a parallel analysis regarding polynomial representatives of homotopy classes. Work surrounding the Bott periodicity theorem suggested that algebraic approximation might provide access to stable homotopy theory, while results of Baum and Wood revealed a sharp distinction between real spheres and their complexifications. The corresponding holomorphic problem, by contrast, falls within the scope of Oka-type principles.

Finally, Section~\ref{ss:motivicresults} returns to motivic homotopy theory. Modern methods both explain the limitations of the preceding approaches and extend them far beyond what mid-century techniques could achieve. Taken together, the discussion isolates those parts of the problems that could already be treated with mid-century techniques and those for which genuinely new structures are required.

\subsection{Algebraic vs. topological line bundles, revisited}
\label{ss:linebundlecomparison}
By the end of the 1950s with sheaf cohomology firmly within the mathematical mainstream, the analysis of Problem~\ref{problemintro:algebraizability} from the introduction, a closely related form of which appears in some work of Serre, could be studied in the case $r = 1$ by separately analyzing algebraic and topological line bundles and observing compatibility of these studies.  We begin with an analysis of topological and analytic line bundles on complex manifolds, bringing together sheaf theory and facts about the topology of algebraic varieties. 

\subsubsection*{Line bundles and sheaf cohomology}
If $X$ is a complex manifold, then locally trivial complex topological line bundles on $X$ are classified as elements of the sheaf cohomology group $H^1(X,\C^{\times}_X)$ where $\C^{\times}_X$ is the sheaf of non-zero complex valued continuous functions on $X$.  If $\Z_X$ is the constant sheaf on $X$ associated with the abelian group $\Z$, then we may write down the exponential sheaf sequence
\[
0 \longrightarrow \Z_X \longrightarrow \C_X \stackrel{exp(2\pi i -)}{\longrightarrow} \C^{\times}_X \longrightarrow 0.
\]   
We consider the following portion of the associated long exact sequence in cohomology:
\[
\cdots \longrightarrow H^1(X,\C_X) \longrightarrow H^1(X,\C^{\times}_X) \stackrel{c_1^{top}}{\longrightarrow} H^2(X,\Z_X) \longrightarrow H^2(X,\C_X) \longrightarrow \cdots,
\]
where we have written $c_1^{top}$ for the connecting homomorphism in the long exact sequence.  The sheaf $\C_X$ is a soft sheaf and therefore its higher cohomology vanishes \cite{Godement1958Topologie}, showing that $c_1^{top}$ is an isomorphism of abelian groups.  The sheaf cohomology group $H^2(X,\Z_X)$ can be identified with singular cohomology $H^2(X,\Z)$ and one deduces that topological complex line bundles are in bijection with elements of $H^2(X,\Z)$.

\begin{ex}
	\label{ex:toplinebundlesoncurves}
	To see how to use the above, suppose $X$ is a nonsingular complex affine curve. Then, $X$ has a nonsingular projective compactification $\bar{X}$, which is a compact Riemann surface of some genus $g$.  In that case, $\bar{X} \setminus X$ is a (non-empty) finite set, and one knows that $X$ has the homotopy type of a wedge of finitely many circles.  In that case, $H^2(X,\Z)$ necessarily vanishes, and $H^1(X,\Z)$ is a free abelian group equal to the number of circles in the wedge, which can be explicitly identified in terms of the genus $g$ and the number of points being removed.  In any case, we conclude that every topological complex line bundle on a nonsingular complex affine curve is trivial. 
\end{ex}

\begin{rem}
	\label{rem:CartanOka}
	Henri Cartan gave the name Stein manifold to a class of complex manifolds whose properties were first analyzed by Karl Stein in 1951 \cite{Stein1951}; Stein manifolds can be defined as closed complex submanifolds of $\cplx^n$ (this is not Stein's original definition, but is equivalent to it by a 1956 result of R. Remmert \cite{Remmert1956CRAS}).  If $X$ is a Stein manifold, then there are holomorphic variants of the statements above: one may replace $\C_X$ with $\mathscr{O}^{hol}_X$ the sheaf of holomorphic functions, and $\C_X^{\times}$ with $\mathscr{O}^{hol,\times}_X$ the sheaf of non-zero holomorphic functions.  In that case, there is a corresponding exponential sheaf sequence exactly as above.  The sheaf $\mathscr{O}^{hol}_X$ is a coherent analytic sheaf and the assumption on $X$ allows one to apply Cartan's theorems A and B \cite{CartanBruxelles} to conclude that $\mathscr{O}^{hol}_X$ has vanishing higher cohomology (in fact, holomorphic formulations like these predate the ``continuous'' discussion above; they appear already in a series of letters from Serre to Cartan from 1952, reproduced in \cite{Serre1991PetitsCousins}; Cartan's theorems are phrased here first as  ``naive questions'', but are used as hypotheses in later letters.).  We deduce that
	\[
	H^1(X,\mathscr{O}^{hol,\times}_X) \longrightarrow H^2(X,\Z_X)
	\]
	is a bijection, which is Cartan's formulation of the Oka principle: every topological complex line bundle admits a unique analytic structure (in particular, in light of Example~\ref{ex:toplinebundlesoncurves}, every analytic line bundle on a nonsingular affine curve is necessarily trivial).  One might view this result in the spirit of Weierstrass's approximation theorem (in the non-compact case). Grauert established a generalization of this result, which we will recall shortly in Theorem~\ref{thm:grauertoka}. 
\end{rem}

\subsubsection*{Algebraic line bundles and divisors}
The lack of an exponential sheaf sequence in the algebraic context necessitates a slightly different approach to the treatment of algebraic line bundles.  Instead, if $X$ is a connected, nonsingular complex algebraic variety, then we have the divisor--line bundle correspondence, which we may formulate as follows (see Section~\ref{ss:algvscont} for additional context).  Write $\cplx(X)$ for the field of rational functions on $X$.  In that case, any codimension $1$ point $x$ of $X$ defines a discrete valuation $ord_x$ of $\cplx(X)$ computing the order of vanishing of $f \in \cplx(X)$ at this point.  In that case, we obtain a $2$-term complex 
\[
\cplx(X) \stackrel{\operatorname{div}}{\longrightarrow} \bigoplus_{x \in X^{(1)}} \Z
\]
where $X^{(1)}$ is the set of codimension $1$ points of $X$, and the map $\operatorname{div}$ sends a rational function $f$ to $\sum_{x \in X^{(1)}} ord_x (f)$, which is a finite sum.  The cokernel of the above map is, by definition, $CH^1(X)$, the divisor class group of $X$.  Under the assumptions on $X$, the above complex can be viewed as a flasque resolution of the sheaf $\Gm$ in the Zariski topology; we thus obtain an identification $CH^1(X) \isomto  H^1(X,\Gm) =: \Pic(X)$.  Given a line bundle $L$ on a nonsingular complex algebraic variety, we write $c_1(L) \in CH^1(X)$ for the associated class. 

If $X$ is a curve, then codimension $1$ points on $X$ are closed points in the usual sense.  In that case, the description above can be effectively used in conjunction with the fact that $X$ admits a unique nonsingular projective model $\bar{X}$.  In this case, the set $D := \bar{X} \setminus X$ is a (possibly empty) finite set, and there is an exact sequence of the form
\[
\Z^{|D|} \longrightarrow CH^1(\bar{X}) \longrightarrow CH^1(X) \longrightarrow 0.
\]
The algebraic line bundles on $\bar{X}$ can be described in several ways, corresponding to different ways of thinking about the Jacobian variety for $\bar{X}$.  Let us write $g$ for the genus of $\bar{X}$.

If one knows that analytic and algebraic line bundles coincide, as was established in Serre's famous GAGA paper \cite{SerreGAGA}, then the sheaf cohomological description of line bundles follows from the analysis of the exponential sheaf sequence.  Indeed, the compactness of $\bar{X}$ tells us that global invertible functions are constant; in particular this means $H^0(\bar{X},\mathscr{O}_{\bar{X}}^{hol}) \cong \cplx$ and $H^0(\bar{X},\mathscr{O}_{\bar{X}}^{hol,\times}) \cong \cplx^{\times}$.  Since the latter is a divisible group and $H^1(\bar{X},\Z)$ is a free abelian group of rank equal to twice the genus of $\bar{X}$, the connecting homomorphism is necessarily the trivial map, and one concludes that there is an exact sequence of the form: 
\[
0 \longrightarrow H^1(\bar{X},\mathscr{O}_{\bar{X}}^{hol})/H^1(\bar{X},\Z) \longrightarrow H^1(\bar{X},\mathscr{O}_{\bar{X}}^{hol,\times}) \longrightarrow \Z \longrightarrow 0.
\]
The Riemann--Roch theorem tells us that $\chi(\mathscr{O}_{\bar{X}}^{hol}) = 1-g$, which unwinding the definition of the Euler characteristic implies $H^1(\bar{X},\mathscr{O}_{\bar{X}}^{hol})$ is a complex vector space of dimension equal to $g$.  We conclude that analytic line bundles of a fixed degree (first Chern class) on $\bar{X}$ are parameterized by a complex torus of dimension $g$.

\begin{rem}
There were also ``purely algebraic'' descriptions of this identification. Weil showed in \cite{Weil1948} that divisors of degree $0$ on $\bar{X}$ were themselves parameterized by an abelian variety of genus $g$; this approach used the Riemann-Roch theorem, but did not rely on complex analysis.
\end{rem}


\subsubsection*{Topological restrictions on algebraic line bundles}
We collect the results described above about the structure of line bundles on nonsingular affine curves in the following result.

\begin{proposition}
	If $X$ is a nonsingular complex affine curve of genus $g \geq 1$, then $\Pic(X)$ is non-trivial and $H^2(X,\Z) = 0$; in particular, the map 
	\[
	\mathscr{V}_1(X) \longrightarrow \mathscr{V}_1^{top}(X)
	\]
	fails to be injective. 
\end{proposition}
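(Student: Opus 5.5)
The vanishing $H^2(X,\Z)=0$ is already Example~\ref{ex:toplinebundlesoncurves}. $X^{an}$ is a noncompact Riemann surface, namely $\bar X$ minus a nonempty finite set $D$, so it deformation retracts onto a wedge of circles. Hence $H^2(X,\Z)=0$, and by the exponential sequence argument every topological line bundle on $X$ is trivial. So $\mathscr{V}_1^{top}(X)$ is a single point. Failure of injectivity is then equivalent to $\Pic(X)=CH^1(X)$ being nontrivial: the trivial bundle and any nontrivial algebraic line bundle would have the same image. The plan is therefore to prove $\Pic(X)\neq 0$ when $g\geq 1$.

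For this I will use the exact sequence $\Z^{|D|}\to CH^1(\bar X)\to CH^1(X)\to 0$ recorded above. The first map sends the generator indexed by $p\in D$ to the class $[p]$. By the exponential sequence analysis on $\bar X$, together with GAGA, $CH^1(\bar X)=\Pic(\bar X)$ sits in an extension
\[
0\longrightarrow \mathrm{Pic}^0(\bar X)\longrightarrow \Pic(\bar X)\stackrel{\deg}{\longrightarrow}\Z\longrightarrow 0,
\]
and $\mathrm{Pic}^0(\bar X)\cong H^1(\bar X,\mathscr{O}^{hol})/H^1(\bar X,\Z)$ is a complex torus of dimension $g\geq 1$, hence uncountable.

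The key step is a countability argument. The image of $\Z^{|D|}$ is a finitely generated, hence countable, subgroup of $\Pic(\bar X)$. Its intersection with $\mathrm{Pic}^0(\bar X)$ is therefore countable, while $\mathrm{Pic}^0(\bar X)\cong(\cplx/\Z^2)^{g}$ as a real Lie group is uncountable. So some degree-zero class is not in the image, and it maps to a nonzero element of $CH^1(X)$. In fact $\Pic(X)$ is uncountable.

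The only point needing care, and the main obstacle, is the identification of $\mathrm{Pic}^0(\bar X)$ with a positive-dimensional torus. That requires $H^1(\bar X,\Z)\cong\Z^{2g}$ to be a lattice in $H^1(\bar X,\mathscr{O}^{hol})\cong\cplx^g$, as opposed to merely a subgroup. This follows from Hodge theory: $H^1(\bar X,\real)\to H^1(\bar X,\mathscr{O}^{hol})$ is an isomorphism of real vector spaces. Alternatively, one can avoid this and only use that the quotient of $\cplx^g$ by a countable subgroup is uncountable, which already suffices. If one prefers to stay purely algebraic, Weil's construction of the Jacobian as an abelian variety of dimension $g$ gives the same uncountability directly.
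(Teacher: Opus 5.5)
Your proposal is correct and follows essentially the paper's route. The paper assembles Example~\ref{ex:toplinebundlesoncurves} (so $\mathscr{V}_1^{top}(X)$ is a point), the localization sequence $\Z^{|D|}\to CH^1(\bar X)\to CH^1(X)\to 0$, and the GAGA/exponential-sequence description of $\Pic(\bar X)$ as an extension of $\Z$ by a $g$-dimensional complex torus; your countability step (a finitely generated subgroup cannot exhaust $H^1(\bar X,\mathscr{O}^{hol}_{\bar X})/H^1(\bar X,\Z)$ once $g\geq 1$, which needs only $\dim H^1(\bar X,\mathscr{O}^{hol}_{\bar X})=g$ and not the lattice property) just makes explicit the step the paper leaves implicit.
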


To transfer the above argument to analyze higher dimensional nonsingular algebraic varieties, we need some additional results.  We relied on the existence of a nonsingular model of $\bar{X}$; after various geometric arguments that were later viewed as unconvincing, resolution of singularities of surfaces was established by R. Walker in \cite{Walker1935ReductionSingularities} using analytic techniques.   A new ``arithmetic'' argument was soon thereafter given by Zariski \cite{Zariski1939ReductionSingularities} and simplified in \cite{Zariski1942SimplifiedResolution}, and Zariski later established resolution of singularities for threefolds in characteristic $0$. Of course, resolution of singularities for varieties in characteristic zero was not established in all dimensions until the work of Hironaka in the early 1960s.

Bearing that in mind, if $X$ is a nonsingular affine variety of dimension $\leq 2$, then we can always find a nonsingular (projective) compactification $\bar{X}$ such that $\partial X := \bar{X} \setminus X$ is a simple normal crossings divisor.  In that case, Hodge's theory of harmonic representatives gives a decomposition $H^2(\bar{X},\cplx) = \bigoplus_{i+j=2} H^{i,j}(\bar{X})$, where $H^{i,j}(\bar{X}) := H^j(\bar{X},\Omega^i_{\bar{X}})$.  We set $H^{(1,1)}(\bar{X},\Z) := H^2(\bar{X},\Z) \cap H^{1,1}(\bar{X})$.  If $L$ is a complex algebraic line bundle on $\bar{X}$, then the interpretation of Lefschetz's theorem due to Hodge and Weil can be formulated as the assertion that $c_1^{top}(L) \in H^{(1,1)}(\bar{X},\Z)$ and every class in this group is realized as the first Chern class of a complex algebraic line bundle.

Let us write $|\partial X|$ for the (finite) number of irreducible components of $\partial X$.  In that case, the divisor--line bundle correspondence yields the localization exact sequence
\[
\Z^{|\partial X|} \longrightarrow CH^1(\bar{X}) \longrightarrow CH^1(X) \longrightarrow 0
\]
where the right hand map corresponds to restriction of line bundles.  Evidently, the restriction map for line bundles is compatible with the passage from algebraic to topological line bundles.  In conjunction with the discussion of the previous paragraph, there are induced restrictions on the first Chern classes of algebraic line bundles on $X$ as well: we can define $H^{1,1}(X,\Z)$ to be the image of $H^{1,1}(\bar{X},\Z)$ under restriction from $\bar{X}$ to $X$, and first Chern classes of line bundles evidently land here. 

\begin{ex}[Failure of surjectivity]
	\label{ex:surjectivityfails}
	In fact, there exist nonsingular affine surfaces $X$ for which $\mathscr{V}_1(X) \to \mathscr{V}_1^{top}(X)$ fails to be surjective.  For example, let $\bar{X}$ be a K3 surface (note that Andr\'e Weil coined the term K3 surface only around 1958; see \cite[p. 390]{WeilII}).  One knows that $H^2(\bar{X},\mathscr{O}_{\bar{X}}) \cong H^0(\bar{X},\omega_{\bar{X}}) \cong \cplx$.  It is well-known that $H^2(\bar{X},\Z)$ is a free abelian group of rank $22$, and  $H^{1,1}(\bar{X})$ is a complex vector space of dimension $20$.  The group $H^{(1,1)}(\bar{X},\Z)$ is the N\'eron--Severi group, which is an abelian group of rank $\leq 20$.  Now, let $D$ be an ample divisor on $\bar{X}$ so that $X := \bar{X} \setminus D$ is an affine variety.  Bertini's theorem implies that we may assume $D$ is supported on a nonsingular curve in $\bar{X}$ if we wish.  Surjectivity of the map $CH^1(\bar{X}) \to CH^1(X)$ implies any algebraic line bundle on $X$ is, up to isomorphism, the restriction of a line bundle on $\bar{X}$.  In particular, the topological first Chern class of any algebraic line bundle on $X$ lies in the image of $H^{(1,1)}(\bar{X},\Z)$ in $H^2(X,\Z)$. The Gysin exact sequence for the inclusion of $D$ in $\bar{X}$ takes the form:
	\[
	\Z \cong H^0(D,\Z) \longrightarrow H^2(\bar{X},\Z) \longrightarrow H^2(X,\Z) \longrightarrow H^1(D,\Z) 
	\]
	Consequently $\operatorname{rk}(H^2(X,\Z))$ $\geq 21$.  Since the image of $H^{(1,1)}(\bar{X},\Z)$ has rank at most $19$, we conclude the desired non-surjectivity.
\end{ex}


The method analyzed above can be generalized.  If $X$ is a nonsingular complex affine variety, then resolution of singularities guarantees that there is a nonsingular complex projective compactification $\bar{X}$ of $X$ such that $\bar{X} \setminus X$ is a simple normal crossings divisor.  This imposes Hodge-theoretic restrictions on the possible Chern classes of line bundles.  We may define a group $H^{(1,1)}(X,\Z)$ to be the image of $H^{(1,1)}(\bar{X},\Z)$ under restriction; the resulting image does not depend on choice of compactification.  Bearing this in mind, the following statement encapsulates the mechanism by which failure of surjectivity is detected in Example~\ref{ex:surjectivityfails}.

\begin{theorem}
	\label{thm:algebraiclinebundlesonaffinevars}
	If $X$ is a nonsingular affine variety then the image of the map $\mathscr{V}_1(X) \to \mathscr{V}_1^{top}(X)$ consists precisely of those line bundles with first Chern class lying in $H^{(1,1)}(X,\Z)$. 
\end{theorem}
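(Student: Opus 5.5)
The plan is to prove the two inclusions separately, using a good compactification throughout. By resolution of singularities, choose a nonsingular projective $\bar{X}$ containing $X$ as an open subvariety with $D := \bar{X} \setminus X$ a simple normal crossings divisor, and write $j: X \hookrightarrow \bar{X}$ for the inclusion. Since $\mathscr{V}_1^{top}(X) \cong H^2(X,\Z)$ via $c_1^{top}$ (the exponential sheaf sequence discussion), the statement amounts to identifying the image of $c_1^{top}: \Pic(X) \to H^2(X,\Z)$ with $j^*H^{(1,1)}(\bar{X},\Z)$.

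\emph{Containment.} Let $L$ be an algebraic line bundle on $X$. The divisor--line bundle correspondence gives $\Pic(X) \cong CH^1(X)$, and the localization sequence $\Z^{|\partial X|} \to CH^1(\bar{X}) \to CH^1(X) \to 0$ shows that $L \cong j^*\bar{L}$ for some algebraic line bundle $\bar{L}$ on $\bar{X}$. Concretely, take a divisor representing $L$ and its closure in $\bar{X}$; this uses that $\bar{X}$ is smooth, so Weil and Cartier divisors agree. Topological $c_1$ commutes with pullback, so $c_1^{top}(L) = j^* c_1^{top}(\bar{L})$. By the Lefschetz $(1,1)$ theorem in the Hodge--Weil formulation recalled above, together with GAGA to pass from algebraic to analytic line bundles on the projective variety $\bar{X}$, we have $c_1^{top}(\bar{L}) \in H^{(1,1)}(\bar{X},\Z)$. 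Hence $c_1^{top}(L) \in H^{(1,1)}(X,\Z)$.

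\emph{Realization.} Let $\alpha \in H^{(1,1)}(X,\Z)$, and write $\alpha = j^*\beta$ with $\beta \in H^{(1,1)}(\bar{X},\Z)$. The surjectivity half of Lefschetz $(1,1)$, again combined with GAGA, produces an algebraic line bundle $\bar{L}$ on $\bar{X}$ with $c_1^{top}(\bar{L}) = \beta$. Then $j^*\bar{L}$ is algebraic on $X$ and has first Chern class $\alpha$. Since $c_1^{top}$ is a bijection on topological line bundles, the topological bundle with class $\alpha$ lies in the image.

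\emph{Main obstacle.} The one step that is not formal is the lifting in the containment direction: every algebraic line bundle on $X$ must extend to $\bar{X}$. This needs both the smoothness of $\bar{X}$ and the surjectivity of restriction on Chow groups, where the key input is that the closure of a divisor is again a divisor. Beyond that, the argument is bookkeeping, plus the remark already made in the paper that $H^{(1,1)}(X,\Z)$ does not depend on the choice of compactification. If one wanted to verify that independence directly, one could dominate any two compactifications by a third, via resolving the closure of the diagonal, and use functoriality of the Hodge decomposition under pullback. For the proof itself, however, fixing a single compactification suffices.
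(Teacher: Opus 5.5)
Your proof is correct and follows the same route as the paper. Both arguments fix a compactification with simple normal crossings boundary, lift line bundles from $X$ to $\bar{X}$ via surjectivity of $CH^1(\bar{X}) \to CH^1(X)$, and apply the Lefschetz $(1,1)$ theorem on $\bar{X}$ in both directions. One small caveat concerns your aside on independence of the compactification: pulling back along a common resolution $\bar{X}_3 \to \bar{X}_i$ only shows that the image of $H^{(1,1)}(\bar{X}_i,\Z)$ is contained in that of $H^{(1,1)}(\bar{X}_3,\Z)$, and the reverse inclusion needs pushforward or simply the theorem itself, which identifies each image with $c_1^{top}(\Pic(X))$; as you note, however, the proof does not need this.
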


\subsection{Hodge-theoretic restrictions}
\label{ss:hodgetheory}
From a modern point of view, it is natural to ask whether the theorem of the previous section admits an analog for vector bundles of higher rank.  Schwarzenberger studied related questions in detail for surfaces in the early 1960s \cite{Schwarzenberger1961VectorBundlesP2,Schwarzenberger1961VectorBundles}.  A more systematic comparison can be traced to work of Griffiths in the late 1960s and early 1970s \cite{Griffiths1972FunctionTheoryIA,Griffiths1972FunctionTheoryIB}, though there are other sources more in the spirit of Schwarzenberger's work that we will visit momentarily.

\subsubsection*{Chow rings and algebraicity}
To formulate the statement, we need to recall that, in the course of establishing his version of the Riemann--Roch theorem, Grothendieck introduced Chern classes in Chow theory \cite{Grothendieck1958Chern} generalizing the construction for line bundles above.  Grothendieck takes an axiomatic approach to his description, and rather than repeating that construction here, we use a different one, more in line with the homotopical spirit of this text.  Already in \cite[Chapter III.1]{Chern1946}, Chern describes characteristic classes in universal terms via cohomology of Grassmannians, and this description was given a textbook treatment in \cite[\S 14]{MilnorStasheff1974}.  In this account, effectively we simply have to replace cohomology by Chow groups.    

The Chow groups of a nonsingular variety can be defined in a way formally analogous to our ``algebraic'' description of the Picard group via the divisor-line bundle correspondence.  Indeed, if $X$ is a nonsingular variety of dimension $d$ over an algebraically closed field $k$, then we may define $CH^i(X)$ as the cokernel: 
\[
\bigoplus_{x \in X^{(i-1)}} k(x)^{\times} \stackrel{\partial}{\longrightarrow} \bigoplus_{x \in X^{(i)}} \Z \cdot x;
\]
here $k(x)$ is the function field of the variety obtained as the closure of the codimension $i-1$ point $x \in X^{(i-1)}$, and $\partial$ is described in a fashion analogous to the divisor map; this description arises naturally via so-called Gersten resolutions, which appear regularly in the analysis of sheaves arising in motivic homotopy theory, and is one reason we mention it here; see \cite{Rost1996ChowGroupsCoefficients} for a systematic development of these ideas.  Of course, this approach has downsides: verifying many useful properties of Chow groups involves non-trivial complications in this approach (e.g., contravariant functoriality with respect to pullbacks along maps of nonsingular varieties, $\aone$-invariance, etc.)

Since we aim to focus on nonsingular affine varieties, we freely employ Serre's dictionary between (finite rank) algebraic vector bundles on such varieties and finitely generated projective modules over the coordinate ring \cite{SerreFAC}.  One consequence of this dictionary is embedded in the ``functor of points'' description of the Grassmannian variety.  If $X$ is nonsingular and affine, then morphisms $f: X \to Gr_{r,N}$ are in bijection with $N$-generated projective modules of rank $r$ over $R:= \Gamma(X,\mathscr{O}_X)$.  

One may directly compute the Chow ring of the Grassmannian: it is generated by classes $c_i \in CH^i(\mathrm{Gr}_{r,N}), i = 1,\ldots,r$.  In particular, if $E$ is a vector bundle on a nonsingular affine variety $X$, then by choosing generating sections of $E$, we get a map $f: X \to \mathrm{Gr}_{r,N}$, and thus by formal properties of Chow rings, associated classes $c_i(E) := f^*c_i$.  A priori, these classes depend on the particular choice of generating sections, but an $\aone$-homotopy invariance argument (for Chow groups) shows that in fact the Chern classes are independent of the choice.  

\begin{rem}
	In fact, one can define Chern classes for arbitrary nonsingular algebraic varieties in this fashion.  If $X$ is such a variety, then a result of Jouanolou--Thomason shows that there exists a nonsingular affine variety $\tilde{X}$ and an affine morphism $\pi: \tilde{X} \to X$ that is Zariski locally trivial with affine space fibers.  If $\mathscr{E}$ is any vector bundle on $X$, then we may define Chern classes as above for $\pi^*\mathscr{E}$ (see, e.g., \cite[Proposition 4.4]{Weibel1989}, noting that non-singular varieties carry an ample family of line bundles).  Homotopy invariance of Chow groups implies that the Chern classes so defined are independent of choice of $\tilde{X}$.  
\end{rem}

Relevant to our discussion here is the existence of cycle class maps $cl_i: CH^i(X) \to H^{2i}(X,\Z)$.  When $X$ is a complex variety, one standard approach to existence of this map relies on resolution of singularities: given a generating (irreducible) algebraic cycle $Z \subset \bar{X}$, one takes a resolution of singularities $\tilde{Z} \to Z$, and defines $cl_i(Z)$ to be the pushforward of the fundamental class of $\tilde{Z}$.  It can be checked that this class is independent of the resolution, and behaves well with respect to rational equivalence.  It will be more convenient for us to rely on Voevodsky's Theorem~\ref{thm:VoevodskyDoldKan} in the case $A = \Z$: observe that $K(\Z(n),2n)$ represents Voevodsky's motivic cohomology and $H^{2n}(X,\Z(n)) \cong CH^n(X)$ by Voevodsky's comparison between motivic cohomology and Chow groups (see, e.g., \cite[Corollary 19.2]{MVW}).  In that case, the existence of the cycle class map follows from the properties of the complex realization functor we outlined. 

\begin{defn}
	If $X$ is a nonsingular affine variety, then we will say that $u \in H^{2i}(X,\Z)$ is algebraic if it lies in the image of the cycle class map $CH^i(X) \to H^{2i}(X,\Z)$.
\end{defn}

In the particular case of Grassmannians, an immediate consequence of our discussion is that the (universal) Chern classes $c_i$ are mapped to their topological counterparts $c_i^{top} \in H^{2i}(\mathrm{Gr}_{r,N},\Z)$ (indeed, the cycle class map is bijective in this case).  In the context of the above definition of algebraicity, this leads to the following observation.

\begin{proposition}
	If $X$ is a nonsingular affine variety and $E$ is a rank $r$ vector bundle on $X$, then the topological Chern classes $c_i^{top}(E)$ are algebraic.
\end{proposition}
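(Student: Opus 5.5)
The plan is to reduce the statement to the universal case of the Grassmannian and then use naturality of the cycle class map. Fix a rank $r$ bundle $E$ on the nonsingular affine variety $X$ and choose finitely many global sections generating it; Serre's dictionary gives a morphism $f\colon X \to \mathrm{Gr}_{r,N}$ with $f^*\mathscr{T} \cong E$, where $\mathscr{T}$ is the tautological bundle. By definition the algebraic Chern classes are $c_i(E) = f^*c_i \in CH^i(X)$.

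The key steps, in order, are as follows.

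\begin{enumerate}
\item \emph{Naturality of the cycle class map.} I would show that $cl_i$ commutes with pullback along morphisms of nonsingular varieties, so that
\[
cl_i(f^*c_i) = (f^{an})^* cl_i(c_i).
\]
In the formulation adopted in the paper, $cl_i$ is induced by complex realization applied to maps $X \to K(\Z(i),2i)$, via $H^{2i}(X,\Z(i)) \cong CH^i(X)$ and Theorem~\ref{thm:VoevodskyDoldKan}. Naturality is then just functoriality of $r_{\cplx}$: precomposing with $f$ commutes with realization. One needs the identification of $CH^i$ with motivic cohomology to be compatible with pullbacks, which is part of the cited comparison.

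\item \emph{The universal case.} I would check that $cl_i(c_i) = c_i^{top}(\mathscr{T}^{an})$ in $H^{2i}(\mathrm{Gr}_{r,N},\Z)$. This is the assertion recorded just before the proposition. To justify it, I would use the Schubert cell decomposition, which makes $cl$ an isomorphism $CH^*(\mathrm{Gr}_{r,N}) \to H^{2*}(\mathrm{Gr}_{r,N},\Z)$. It then suffices to compare both sets of Chern classes through the splitting principle. For line bundles, $cl_1(c_1(L)) = c_1^{top}(L^{an})$ follows by comparing the divisor class with the exponential-sequence connecting map of Section~\ref{ss:linebundlecomparison}. Both theories satisfy the Whitney formula, and $cl$ is a ring map, so the identity extends to $c_i$ for all $i$.

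\item \emph{Conclusion.} Since the topological Chern classes are natural and $(f^*\mathscr{T})^{an} = f^{an*}\mathscr{T}^{an}$, we get
\[
c_i^{top}(E) = (f^{an})^* c_i^{top}(\mathscr{T}^{an}) = (f^{an})^* cl_i(c_i) = cl_i(c_i(E)).
\]
So $c_i^{top}(E)$ lies in the image of $cl_i$, which is exactly the definition of being algebraic.
\end{enumerate}

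The main obstacle is the universal compatibility in step 2, specifically that $cl$ is multiplicative and correctly normalized on $c_1$. I would handle this via the projective bundle formula on both sides, with the hyperplane class on $\mathbb{P}^n$ as the base case, rather than computing directly on Grassmannians. Independence of $c_i(E)$ from the choice of generating sections is already provided by $\aone$-invariance, as noted earlier.
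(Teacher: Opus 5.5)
Your proposal is correct and follows essentially the same route as the paper, which simply observes that the universal classes $c_i \in CH^i(\mathrm{Gr}_{r,N})$ map to $c_i^{top}$ under the cycle class map, so that naturality gives $c_i^{top}(E) = cl_i(f^*c_i)$ for the classifying map $f$ obtained from generating sections. You supply more detail for the universal compatibility (splitting principle, normalization on $c_1$, multiplicativity of $cl$), which the paper treats as immediate; bijectivity of $cl$ on Grassmannians is not actually needed for that step.
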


The link between this observation and the Hodge-theoretic restrictions on cohomology classes we described above might seem somewhat tenuous at this stage, but Grauert formulated an extension of the results described in Remark~\ref{rem:CartanOka} that allows us to bring the discussion closer to Hodge theory.

\begin{theorem}[Grauert's Oka principle {\cite{Grauert1958Analytische}}]
	\label{thm:grauertoka}
	If $X$ is a Stein manifold, then for every integer $r > 0$ the  map
	\[
	\mathscr{V}_r^{hol}(X) \longrightarrow \mathscr{V}_r^{top}(X)
	\]
	is bijective.
\end{theorem}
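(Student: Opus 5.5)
The plan is to prove Grauert's theorem by an induction on skeleta of an exhaustion, turning the comparison into a lifting problem for sections of fiber bundles. We reduce injectivity and surjectivity to the following relative statement. Let $G = GL_r(\cplx)$. Suppose $P \to X$ is a holomorphic principal $G$-bundle on a Stein manifold $X$. Suppose also that $s$ is a continuous section of an associated holomorphic bundle with fiber a homogeneous space $G/H$. Then $s$ is homotopic through continuous sections to a holomorphic one. Surjectivity follows by taking $H=\{1\}$ and working with isomorphisms. More precisely, we first realize a given topological bundle as a holomorphic cocycle: continuous $G$-valued transition functions on a Stein (Leray) cover are approximated and deformed into holomorphic ones. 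Injectivity follows by applying the statement to the bundle $\mathrm{Iso}(E,E')$. This bundle has fiber $G$, and a continuous section of it is a topological isomorphism, which we deform to a holomorphic isomorphism.

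To prove the relative statement, we choose a strictly plurisubharmonic exhaustion $\phi$ of $X$ and an exhaustion by compact $\mathscr{O}(X)$-convex sublevel sets $K_1 \subset K_2 \subset \cdots$. We then construct holomorphic sections $s_j$ near $K_j$ such that $s_j$ is homotopic to $s$ and $s_{j+1}$ is close to $s_j$ on $K_j$, and we pass to a limit. The passage from $K_j$ to $K_{j+1}$ splits into two kinds of steps.

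The first is the noncritical case, where we add convex bumps. Here we write $K_{j+1} = A \cup B$ with $B$ a small convex polydisc-like set on which the bundle is trivial, so the data on $B$ amounts to $G$-valued maps. To glue a holomorphic section on $A$ to one on $B$, we solve a Cartan-type splitting problem. Given $\gamma$ holomorphic and close to the identity on $A \cap B$, we write $\gamma = \alpha \beta^{-1}$ with $\alpha$ holomorphic on $A$ and $\beta$ holomorphic on $B$. This we do by linearizing through the exponential map and applying Cartan's Theorem B, i.e.\ solving the additive Cousin problem with sup-norm bounds, together with an implicit function argument. Runge approximation on convex sets, via Oka--Weil, supplies the closeness of $\gamma$ to the identity.

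The second is the critical case, where we pass a critical point of $\phi$. This amounts to attaching a totally real handle of dimension at most $\dim_{\cplx} X$. Here we extend the holomorphic section continuously over the handle using the homotopy to $s$. We then use Mergelyan-type approximation on $K \cup (\text{totally real disc})$ to make it holomorphic near the handle, and return to the noncritical case.

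The main obstacle is the noncritical gluing step for a nonabelian structure group. In the rank-one case of Remark~\ref{rem:CartanOka}, this step collapses to the exponential sequence and Theorem B. For $GL_r$ we need the multiplicative Cartan lemma with uniform estimates, together with the fact that $G$ is complex Lie, so that sections near a given one are parametrized by a holomorphic vector bundle (the adjoint bundle) via the exponential map. I would first try to prove the splitting lemma by a Newton iteration, solving $\log \gamma = a - b$ additively at each stage with bounded solution operators for $\bar\partial$ on strongly pseudoconvex domains. Convergence of the limiting sections and of the homotopies is then a standard diagonal argument.
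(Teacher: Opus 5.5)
\textbf{Gap: the reduction of surjectivity.} The paper gives no proof of this theorem; it only cites Grauert. Your section-theoretic core is a reasonable outline of the later Henkin--Leiterer/Forstneri\v{c}--Prezelj proof of the Oka principle: a strongly plurisubharmonic exhaustion, Cartan-pair splitting at noncritical levels, and Mergelyan approximation on totally real handles at critical levels. This is a different route from Grauert's original argument, which proceeds by induction on the dimension of the base. The problem is the step from the section statement to surjectivity.

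Your relative statement takes a \emph{holomorphic} principal bundle as input. A point of $\mathscr{V}_r^{top}(X)$, however, is only a topological bundle $E$. With $H=\{1\}$ the associated bundle is $P$ itself. So the statement then only says that a topologically trivial holomorphic bundle is holomorphically trivial; it cannot produce a holomorphic structure. Your proposed substitute is to approximate or deform the continuous transition functions $g_{ij}$ on a Leray cover into holomorphic ones. Doing this to each $g_{ij}$ separately destroys the cocycle identity $g_{ij}g_{jk}=g_{ik}$. Repairing that defect is exactly the nonabelian content of surjectivity, which is what has to be proved.

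\textbf{A repair within your framework.} Use a different homogeneous space. Since $X$ is paracompact of finite dimension, $E$ is a subbundle of a trivial bundle $X\times\cplx^N$ with $N<\infty$. Hence $E$ is classified by a continuous map $f\colon X\to \mathrm{Gr}_r(\cplx^N)$ with $f^*\gamma\cong E$, where $\gamma$ is the tautological bundle. Now $\mathrm{Gr}_r(\cplx^N)=GL_N(\cplx)/Q$ with $Q$ parabolic. Your relative statement, applied to the trivial $GL_N(\cplx)$-bundle with $H=Q$, gives a holomorphic $g\simeq f$. Then $g^*\gamma$ is a holomorphic bundle topologically isomorphic to $E$.

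Alternatively, run your bump/handle induction on bundles rather than on sections. Build holomorphic bundles $E_j$ near $K_j$ with $E_{j+1}|_{K_j}\cong E_j$ holomorphically, together with compatible topological isomorphisms to $E$.

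\textbf{Smaller point: general pairs $(G,H)$.} Even injectivity needs more than $G=GL_r$. The bundle $\mathrm{Iso}(E,E')$ is associated to $P_E\times_X P_{E'}$, whose structure group $GL_r\times GL_r$ acts on $GL_r$ by $(g,h)\cdot m=hmg^{-1}$. Its fiber is therefore $(GL_r\times GL_r)/\Delta$. So you should state and prove the relative theorem for an arbitrary complex Lie group $G$ and closed complex subgroup $H$.

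In that generality the noncritical step needs two further ingredients. First, the difference of two nearby holomorphic sections of a $G/H$-bundle must be lifted to a small section of the adjoint bundle. This amounts to holomorphically splitting the surjection onto the vertical tangent bundle, which Theorem B provides on a Stein neighborhood. Second, you need Runge approximation for $G/H$-valued (not vector-valued) maps on convex sets, since Oka--Weil alone does not give it. One way is to write a holomorphic null-homotopy on the convex set as a product of exponentials and approximate the exponents by polynomials.
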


In conjunction with the analysis of Atiyah--Hirzebruch we parenthetically mentioned earlier \cite{AtiyahHirzebruch1962Analytic}, Cornalba and Griffiths used the above results to establish that every rational cohomology class on a Stein manifold (e.g., nonsingular complex affine variety) is represented by the fundamental class of an analytic submanifold \cite[Theorem VIII p.91]{CornalbaGriffiths1975Analytic}.  In view of Theorem~\ref{thm:algebraiclinebundlesonaffinevars}, it thus becomes natural to pose the following question, which is a fundamental question about the structure of algebraic vector bundles on nonsingular affine varieties, roughly in the spirit of the Hodge conjecture. 

\begin{question}
	\label{question:griffiths}
	Does the image of $\mathscr{V}_r(X) \to \mathscr{V}_r^{top}(X)$ consist precisely of those vector bundles whose Chern classes are algebraic? 
\end{question}

There are some standard reductions we can employ to analyze this question.  Indeed, it suffices to assume that $r \leq \dim X$: Serre's splitting theorem \cite{SerreSplitting} implies that every rank $r > d$ algebraic vector bundle on $X$ is isomorphic to the direct sum of a vector bundle of rank $\leq d$ and a trivial bundle of rank $r-d$.  In view of Theorem~\ref{thm:algebraiclinebundlesonaffinevars}, it follows that we only need to analyze the case where $2 \leq r \leq \dim X$.  

\subsubsection*{Topology of affine varieties}
The observation that a nonsingular affine curve has the homotopy type of a wedge of circles suggests that one look at the topological types of nonsingular complex affine varieties.  One famous result in this direction is the Andreotti--Frankel theorem, established in 1958 in the course of giving a new proof of Lefschetz's theorem on the topology of hyperplane sections; we record these results here for our later discussion.  

\begin{theorem}[Andreotti-Frankel {\cite[Theorem 1]{AndreottiFrankel1959}}]
	If $X$ is a Stein manifold (e.g., a nonsingular affine variety) of dimension $d$, then $H_i(X,\Z) = 0$ for $i > d$.
\end{theorem}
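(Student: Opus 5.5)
The plan is to follow Andreotti and Frankel's Morse-theoretic argument: show that $X$ has the homotopy type of a CW complex with cells of dimension at most $d$, so that cellular homology vanishes above degree $d$. First, choose a proper holomorphic embedding $X \hookrightarrow \cplx^N$. For a nonsingular affine variety this is the given closed embedding; for a general Stein manifold it is the Remmert embedding theorem mentioned in Remark~\ref{rem:CartanOka}. Then fix a generic point $a \in \cplx^N$ and consider the function $\varphi(z) = |z-a|^2$ restricted to $X$. Properness of the embedding makes $\varphi$ an exhaustion function: it is proper and bounded below. For generic $a$, $\varphi|_X$ is a Morse function. This follows from the standard fact that squared distance from a point off the focal set is Morse, and the focal set has measure zero by Sard's theorem applied to the endpoint map of the normal bundle.

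The key step is to bound the Morse index at each critical point $p$ by $d = \dim_{\cplx} X$. The function $\varphi$ is strictly plurisubharmonic on $\cplx^N$, since $i\partial\bar\partial\varphi$ is the standard Kähler form, and this property restricts to $X$. Write the real Hessian $H$ of $\varphi|_X$ at $p$ on $T_pX \cong \cplx^d$. For any tangent vector $v$ we have
\[
H(v,v) + H(iv,iv) = 4\,\partial\bar\partial\varphi(v,\bar v) > 0 .
\]
If $W \subset T_pX$ is a real subspace on which $H$ is negative definite, then $W \cap iW = 0$. Otherwise some nonzero $v$ with $v, iv \in W$ would give a negative left-hand side. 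Hence $2\dim_{\real} W \le 2d$, so the index is at most $d$. This is where both the complex structure and the Stein hypothesis are used, and it is the heart of the argument.

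Finally, apply the fundamental theorem of Morse theory for proper Morse functions that are bounded below, with possibly infinitely many critical points. It shows that $X$ is homotopy equivalent to a CW complex built by attaching one cell of dimension $\operatorname{ind}_p$ for each critical point $p$, with every cell of dimension $\le d$. Cellular homology then gives $H_i(X,\Z)=0$ for $i>d$; one may also check this on each finite sublevel set $\{\varphi \le c\}$ and pass to the colimit. The main technical obstacle is this last passage: one must ensure that the noncompact, possibly infinite, handle decomposition still yields a CW model. I would handle it by noting that critical values are discrete by properness, building the CW structure inductively over the sublevel sets $\{\varphi\le c_n\}$ with $c_n\to\infty$, and using that homology commutes with the resulting filtered colimit.
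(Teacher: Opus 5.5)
Your proposal is correct and follows the route the paper points to: the paper gives no proof of its own, citing Andreotti--Frankel and noting that Milnor's exposition of their Morse-theoretic argument (squared distance to a generic point on a properly embedded $X\subset\cplx^N$, index at most $d$, hence a CW model with cells of dimension at most $d$) also yields the homotopy-theoretic statement. Your index bound via strict plurisubharmonicity of $|z-a|^2$, which forces every negative-definite subspace $W$ to satisfy $W\cap iW=0$, is only a minor repackaging of Milnor's observation that the second fundamental form of a complex submanifold changes sign under multiplication by $i$; your fallback of computing homology on compact sublevel sets and passing to the colimit neatly avoids the subtleties of infinite CW complexes.
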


In fact, Andreotti--Frankel's homological statement was quickly placed within a broader Morse-theoretic perspective on Lefschetz-type theorems. In the introduction to his 1959 paper \cite{Bott1959Lefschetz}, Bott observes that ``the proper dual of the Lefschetz theorem states that the homology of a Stein manifold vanishes above the middle dimension'', referring to forthcoming work of Andreotti and Frankel using Morse theory, but noting that ``it does not yield the homotopy statement''. Bott remarks that he learned these ideas from Thom and notes the historical irony that Morse and Lefschetz had worked only a few steps from one another for nearly twenty years, yet the application of Morse theory to Lefschetz-type theorems was comparatively recent. The corresponding homotopy-theoretic formulation for Stein manifolds was later popularized by Milnor in his exposition of Morse theory \cite[Theorem~7.2]{Milnor1963}, where he observes that a Stein manifold has the homotopy type of a CW complex of dimension at most its complex dimension.


\subsubsection*{Comparison maps for low-dimensional varieties}
Returning to the algebraicity question for surfaces, the only interesting case is $r = 2$, so let us analyze this case more directly.  If $X$ is a nonsingular complex affine surface, then we may analyze $[X,BU(2)]$ by obstruction theory.  Since $X$ has the homotopy type of a CW complex of dimension $\leq 2$, it follows from a straightforward obstruction theory argument that, topologically, every rank $r > 1$ complex vector bundle on $X$ splits as the sum of a complex line bundle and a trivial bundle of rank $r-1$ (in fact, the obstruction theory argument goes through with only appeal to the Andreotti--Frankel theorem).  In other words, to answer the above question, it suffices to treat the case $r = 1$, which is the content of Theorem~\ref{thm:algebraiclinebundlesonaffinevars}.  

A similar argument can be made for nonsingular affine varieties of dimension $\leq 3$.  Here, one has to treat ranks $2$ and $3$, but once again, the homology of $X$ vanishes in degrees $> 3$.  Thus, an obstruction theory argument as above again reduces the problem to the case of complex line bundles, where yet again we may appeal to Theorem~\ref{thm:algebraiclinebundlesonaffinevars} to conclude.  We summarize these results in the following omnibus theorem.

\begin{theorem}
	If $X$ is a nonsingular affine variety of dimension $\leq 3$, then the image of the map $\mathscr{V}_r(X) \to \mathscr{V}_r^{top}(X)$ consists precisely of those vector bundles with algebraic Chern classes.
\end{theorem}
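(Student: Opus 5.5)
The plan is to prove the two inclusions separately. The inclusion of the image into the set of bundles with algebraic Chern classes is the Proposition above: topological Chern classes of algebraic bundles are algebraic. For the converse, let $E$ be a topological rank $r$ bundle on $X$ whose Chern classes are algebraic, and write $d=\dim X\le 3$. By Serre's splitting theorem we may reduce to $r\le d$. The case $r=1$ is Theorem~\ref{thm:algebraiclinebundlesonaffinevars}, since $c_1(E)$ algebraic in $H^2(X,\Z)$ means it comes from $CH^1(X)=\Pic(X)$. So the remaining cases are $r=2,3$ with $d\le 3$.

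The key topological input is the Andreotti--Frankel theorem in Milnor's homotopical form: $X^{an}$ has the homotopy type of a CW complex of dimension $\le d\le 3$. We then run obstruction theory on the map $BU(1)\times BU(r-1)\to BU(r)$, or more simply on $BU(1)\to BU(r)$ classifying $L\oplus\mathbb{C}^{r-1}$.

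The fiber of $BU(r-1)\to BU(r)$ is $S^{2r-1}$. Since $2r-1\ge 3$, a rank $r$ bundle on a $3$-dimensional complex reduces to rank $r-1$ whenever the primary obstruction $c_r\in H^{2r}$ vanishes. That obstruction vanishes for $r\ge 2$ because $H^{4}(X,\Z)=0$. Iterating, every topological bundle is stably $L\oplus\mathbb{C}^{r-1}$, so it is of the form $L\oplus\mathbb{C}^{r-1}$ with $L=\det E$. We should also check uniqueness: maps $X\to BU(r)$ are determined by $c_1$, because $BU(r)\to K(\Z,2)$ via $\det$ is $5$-connected for $r\ge 2$. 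Its fiber $BSU(r)$ is $3$-connected, which suffices for a $3$-dimensional complex.

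It follows that $E\cong \det(E)\oplus\mathbb{C}^{r-1}$ topologically. Since $c_1(E)$ is algebraic, Theorem~\ref{thm:algebraiclinebundlesonaffinevars} gives an algebraic line bundle $\mathcal{L}$ with $c_1^{top}(\mathcal{L})=c_1(E)$. Then $\mathcal{L}\oplus\mathcal{O}^{r-1}$ is an algebraic bundle whose underlying topological bundle is $E$.

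The main point requiring care is the connectivity estimate. We need the fiber $BSU(r)$ to be $3$-connected; it is, since $\pi_4BSU(r)=\pi_3SU(r)=\Z$ is the first nonzero homotopy group. This makes $[X^{an},BU(r)]\to H^2(X,\Z)$ bijective for CW dimension $\le 3$, including the required surjectivity. We must also verify that only $c_1$ matters, which is consistent because higher Chern classes land in the zero groups $H^{\ge 4}$. Note that algebraicity of $c_2$ is automatic here since $H^4(X,\Z)=0$.
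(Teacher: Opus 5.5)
Your proposal is correct and follows the paper's route. The Andreotti--Frankel/Milnor bound on the CW dimension of $X^{an}$, combined with obstruction theory, shows that every topological bundle of rank $r\ge 2$ is $L\oplus\mathbb{C}^{r-1}$, which reduces the statement to the line bundle case; your version sharpens this to $[X^{an},BU(r)]\cong H^2(X,\Z)$ via $c_1$, and so also covers all $r\ge 2$ without the Serre splitting reduction. One small slip: $BU(r)\to K(\Z,2)$ is $4$-connected, not $5$-connected, since $\pi_4 BU(r)\cong\Z$ maps to $0$; but the $3$-connectedness of the fiber $BSU(r)$, which you state correctly, is exactly what a complex of dimension $\le 3$ requires.
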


There is a large circle of results from which the above analysis is culled; we highlight some of these results here.  As we mentioned above, Schwarzenberger studied vector bundles on nonsingular projective surfaces.  His \cite[Theorem 8]{Schwarzenberger1961VectorBundles} sits in a limbo between the algebraic and topological worlds: he shows that given any $c_1 \in CH^1(X)$ and any integer $c_2$, there exists an algebraic vector bundle of rank $r \geq 2$ with given $c_1$ and $c_2$, but he is more broadly interested in the construction of higher rank vector bundles.  Of course, this discussion ignores complications stemming from the complexity of $CH^2(X)$ for surfaces. 

In \cite{MurthySwan1976VectorBundlesAffineSurfaces}, Murthy and Swan analyze affine surfaces using a mix of techniques from pure algebra and homotopy theory.  The ``main result'' stated in the introduction focuses on cancellation questions, i.e., when stable isomorphism of bundles implies isomorphism, which gives a K-theoretic framing to the results.  Along the way, they confront subtleties stemming from the complexity of $CH^2(X)$ in the affine case, but they formulate these complexities in terms of filtrations on $K_0(X)$ (see, e.g.,  \cite[Theorem 4.2]{MurthySwan1976VectorBundlesAffineSurfaces}).

Around the same time, Atiyah and Rees \cite{AtiyahRees1976VectorBundlesP3} extend parts of Schwarzenberger's analysis to ${\mathbb P}^3$ using ideas and techniques of Horrocks and Vogelaar (see the references therein).  After observing that topological vector bundles are not uniquely determined by Chern classes, they introduce a new $\mod 2$ invariant they call the $\alpha$-invariant.  Using index theory, they interpret the $\alpha$-invariant in terms of sheaf cohomology, and then observe that every topological vector bundle on ${\mathbb P}^3$ is algebraizable.  

In the early 1980s, Mohan Kumar and Murthy extended the Murthy--Swan analysis to dimension $3$ \cite{KumarMurthy1982AlgebraicCyclesAffineThreefolds}.  In particular, they establish that if $X$ is nonsingular affine of dimension $3$, then there exists a unique rank $3$ vector bundle with given $(c_1,c_2,c_3) \in \prod_{i=1}^3 CH^i(X)$; they also established existence of rank $2$ vector bundles with given Chern classes, without resolving the uniqueness question.

The analysis of Atiyah and Rees was taken in several different directions.  On the one hand, B{\u{a}}nic{\u{a}} and Putinar studied algebraizability questions on more general affine threefolds in a series of papers culminating in  \cite{BanicaPutinar1987VectorBundlesP3}.  After explicitly giving a topological classification, which involves additional invariants beyond the $\alpha$-invariant, they construct numerous algebraic vector bundles and answer the algebraizability question in various cases.  On the other hand, the case of higher dimensional projective space has attracted considerable attention; we refer the reader to \cite{OSS} for further discussion.


\subsection{Quadratic maps and homotopy theory}
\label{ss:quadraticmaps}
In their 1964 paper on the proof of the periodicity theorem for complex vector bundles, Atiyah and Bott \cite{AtiyahBott1964Bott} begin by recalling that topological (complex) vector bundles over $S^2$ are well understood, and suggesting that if the classification of such vector bundles can be understood ``sufficiently intrinsically'', then it would yield information about vector bundles on $S^2 \times X$.  They proceed to sketch such an ``intrinsic'' description leading in two directions, the first of which, in their words, ``brings one close to the work of Grothendieck in algebraic geometry''.  They draw a lesson from their analysis, which we quote here.

\begin{quote}
	In algebraic topology the orthodox method is to replace continuous maps by simplicial approximations, and then use combinatorial methods. When the spaces involved are differentiable manifolds a powerful alternative is to approximate by differentiable maps and use differential geometric techniques. The original proof of the periodicity theorem, using Morse Theory, was of this nature. What we have done here is to use polynomial approximation and then apply algebraic techniques. In principle this method is applicable whenever the spaces involved are algebraic varieties. It would be interesting to see this philosophy exploited on other problems.
\end{quote}

The suggestion of Atiyah and Bott was taken up by several authors.  Atiyah and Bott already mention the work of Reg Wood \cite{Wood1966BanachBott} which applies ``polynomial'' techniques to the analysis of real Bott periodicity.  Around the same time, there was intense activity linking K-theory and stable homotopy theory, via the analysis of the J-homomorphism.  Bearing this in mind, in \cite{Baum1967Quadratic}, Paul Baum poses the question: ``Is there, for example, some specially simple class of polynomial maps which carries the stable homotopy of spheres?''  His analysis yields the following two conclusions:
\begin{enumerate}[noitemsep,topsep=1pt]
	\item The stable $J$-homomorphism can be interpreted as an algebraic operation which converts a linear map into a quadratic map.
	\item Any element of a $k$-stem can be represented by a quadratic map $q: R^n \to R^l$ such that $q(S^{n-1}) \subset R^l \setminus 0$.
\end{enumerate}
Baum's result is entirely existential, though he does recall the algebraic nature of various standard maps (e.g., the Hopf map $\eta: S^3 \to S^2$), but he adds: ``perhaps eventually the $k$-stem may be envisaged as a group of equivalence classes of quadratic forms''.

One could, of course, pose the same question for unstable homotopy of spheres.  Here, the situation is considerably more subtle, and the question of polynomial representatives was taken up by Reg Wood.  In his short note \cite{Wood1968Polynomial}, he considers the sphere as a real algebraic variety, and analyzes the question of whether real polynomial maps of spheres can be used to represent homotopy classes.  His response is resoundingly negative: his Theorem 2 establishes that if $n$ is a power of $2$, then every polynomial map $S^n \to S^{n-1}$ is constant; from this he deduces that if $n \geq 2r$, then every polynomial map $S^n \to S^r$ is constant.  

Wood's analysis presumably curtailed further efforts at finding polynomial representatives.  Nevertheless, Wood revisited the problem in \cite{Wood1993Quadrics} some 25 years later, with a small twist: he observes that the situation is quite different if one replaces spheres by their complexifications, i.e., he studies the varieties $\mathrm{S}^n_{\cplx}$ mentioned in the introduction.  As regards the possibility of existence of polynomial representatives in this situation, his tone is much more measured, and he poses the following question: ``What is the first element in the homotopy groups of spheres which cannot be represented by a polynomial map of quadrics?''  Though he poses this question, immediately afterwards he qualifies it with the addition: ``It would seem optimistic to expect the affine category to be rich enough to represent all homotopy classes, but we cannot rule this out at present''.

By way of comparison with the story we told earlier, one might also ask about the realization of continuous maps by {\em holomorphic} maps of quadrics.  This story falls squarely within the later analysis of Oka-type problems by Cartan, Grauert and his school.  Indeed, the variety $\mathrm{S}^n_{\cplx}$ is a homogeneous space for a complex Lie group.  Extending Cartan's early analysis, Grauert studied the question of whether, if $X$ is a Stein manifold and $G$ is a complex Lie group, any continuous map $X \to G$ is homotopic to a holomorphic map \cite{Grauert1957Holomorphe}.  Later, Ramspott showed that a similar statement held for homogeneous spaces under complex Lie groups \cite{Ramspeott1965Schnitte}; since $\mathrm{S}^m_{\cplx}$ is a homogeneous space for the complex Lie group $SO(m+1,\cplx)$, one immediately deduces the following result.

\begin{theorem}
	\label{thm:holomorphicrepresentatives}
	For any pair of integers $m,n$, every element of $\pi_n(S^m)$ can be represented by a holomorphic map $\mathrm{S}^n_{\cplx} \to \mathrm{S}^m_{\cplx}$.
\end{theorem}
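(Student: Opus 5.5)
The plan is to deduce Theorem~\ref{thm:holomorphicrepresentatives} from the Oka principle for homogeneous spaces, as the preceding paragraph suggests. There are three ingredients: the source $\mathrm{S}^n_{\cplx}$ is a Stein manifold; the target $\mathrm{S}^m_{\cplx}$ is a homogeneous space of a complex Lie group; and $(\mathrm{S}^n_{\cplx})^{an}$ has the homotopy type of $S^n$, compatibly with basepoints.

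For the source, $\mathrm{S}^n_{\cplx}$ is the nonsingular closed subvariety $\sum x_i^2 = 1$ of $\cplx^{n+1}$. Its analytification is therefore a closed complex submanifold of $\cplx^{n+1}$, which is Stein by the definition recalled in Remark~\ref{rem:CartanOka}.

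For the target, $SO(m+1,\cplx)$ acts on $\cplx^{m+1}$ preserving the quadratic form $q(x) = \sum x_i^2$, and hence acts on the level set $q = 1$. This action is transitive by Witt's theorem: any two anisotropic vectors of the same length are related by an isometry. If that isometry has determinant $-1$, we compose it with a reflection fixing the target vector, which is possible as long as $m \geq 1$. The stabilizer of $e_0$ is $SO(m,\cplx)$, so $\mathrm{S}^m_{\cplx} \cong SO(m+1,\cplx)/SO(m,\cplx)$ as complex manifolds.

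Now Ramspott's theorem \cite{Ramspeott1965Schnitte}, which extends Grauert's result \cite{Grauert1957Holomorphe}, says that for $X$ Stein and $Y = G/H$ a complex homogeneous space, every continuous map $X \to Y$ is homotopic to a holomorphic one. This shows that the map
\[
\hom_{hol}(\mathrm{S}^n_{\cplx},\mathrm{S}^m_{\cplx}) \longrightarrow [(\mathrm{S}^n_{\cplx})^{an},(\mathrm{S}^m_{\cplx})^{an}]
\]
is surjective. To identify the target with $\pi_n(S^m)$, use the polar decomposition already invoked in the Proemium. Writing $z = x + iy$, the equation $q(z) = 1$ becomes $|x|^2 - |y|^2 = 1$ and $x \cdot y = 0$. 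So $(\mathrm{S}^n_{\cplx})^{an}$ is diffeomorphic to the tangent bundle $TS^n$, and it deformation retracts onto the real sphere $S^n$ via $y \mapsto ty$.

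The main subtlety is basepoints: $\pi_n(S^m)$ consists of pointed homotopy classes, while Ramspott's theorem is naturally free. I would handle this in one of two ways. The first is to note that $(\mathrm{S}^m_{\cplx})^{an}$ is simply connected for $m \geq 2$, so free and pointed classes coincide. The second works for all $m$: for $Y = G/H$ with $G$ connected, $G$ acts transitively by holomorphic automorphisms, and the path-connectedness of $G$ lets us move the image of the basepoint back to the basepoint through a homotopy of holomorphic maps. The case $m = 1$ is then handled separately. There $\mathrm{S}^1_{\cplx} \cong \Gm$ and $\pi_n(S^1)$ vanishes for $n \geq 2$, so the only nontrivial case is $n = 1$, where the maps $z \mapsto z^k$ represent every class of $\pi_1(S^1) \cong \Z$.

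I expect the only real obstacle to be checking the exact hypotheses of Ramspott's theorem. Those hypotheses, such as connectedness of $G$ and any conditions on $H$, should be satisfied by $SO(m+1,\cplx) \supset SO(m,\cplx)$ for $m \geq 2$.
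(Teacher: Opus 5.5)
Your proposal is correct and is essentially the paper's argument: $\mathrm{S}^n_{\cplx}$ is Stein, $\mathrm{S}^m_{\cplx} \cong SO(m+1,\cplx)/SO(m,\cplx)$ is a complex homogeneous space, and Ramspott's extension of Grauert's Oka principle makes every continuous map homotopic to a holomorphic one. The paper leaves implicit the details you add: transitivity via Witt's theorem, the retraction of $TS^n$ onto $S^n$, and the basepoint discussion. One caution there: your second basepoint trick moves the basepoint only by a free homotopy, so it is really the simplicity of $S^m$ (trivial $\pi_1$-action on $\pi_n$) that guarantees the resulting pointed class is the intended one.
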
  

\subsection{Motivic results}
\label{ss:motivicresults}
The techniques of motivic homotopy theory, for example the representability theorem for vector bundles, Theorem~\ref{thm:vbrepresentability}, and the subsequent infrastructure of Morel's results from \cite{MField}, provide one way to explore the structure of vector bundles on affine varieties beyond the ``low-dimensional'' cases considered above.  For example, the results of \cite{MurthySwan1976VectorBundlesAffineSurfaces} and \cite{KumarMurthy1982AlgebraicCyclesAffineThreefolds} can be put into a broader context.  We refer the reader to \cite{AsokFasel2022VectorBundles} for further discussion in this direction.

Already for smooth affine varieties of dimension $\geq 4$, the story changes considerably.  Indeed, one can see that Question~\ref{question:griffiths} admits a negative answer.  The basic idea is simple: vector bundles are not entirely determined by their Chern classes in higher dimensions.

\begin{theorem}[A.-Fasel-Hopkins {\cite{AsokFaselHopkins2019Obstructions}}]
	Suppose $X$ is a smooth complex affine variety of dimension $4$, and $\mathcal{E}^{an} \to X^{an}$ is a rank $2$ complex analytic vector bundle with Chern classes $c_i^{top} \in H^{2i}(X^{an},\Z)$.  Assume the Chern classes $c_i^{top}$ of $\mathcal{E}^{an}$ are algebraic, i.e., lie in the image of the cycle class map $cl$.  
	\begin{enumerate}[noitemsep,topsep=1pt]
		\item The bundle $\mathcal{E}^{an}$ is algebraizable if and only if we may find $(c_1,c_2) \in CH^1(X) \times CH^2(X)$ with $(cl(c_1),cl(c_2)) = (c_1^{top},c_2^{top})$ such that $Sq^2c_2 + c_1 \cup c_2 = 0 \in CH^3(X)/2$.
		\item There exist smooth complex affine varieties of dimension $4$ such that $cl_1$ and $cl_2$ are bijective, and which carry a (necessarily non-trivial) rank $2$ vector bundle $\mathcal{E}^{an}$ for which the above obstruction is non-trivial; consequently $\mathcal{E}^{an}$ admits no algebraic structure. 
	\end{enumerate}
\end{theorem}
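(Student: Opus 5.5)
The plan is to reduce both parts to an obstruction-theoretic comparison of Postnikov towers. On the motivic side I would use the affine representability Theorem~\ref{thm:vbrepresentability}, so that $\mathscr{V}_2(X)=[X,BGL_2]_{mot}$. On the topological side I would use $\mathscr{V}_2^{top}(X)=[X^{an},BU(2)]$.

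\textbf{Topological side.} By the Andreotti--Frankel/Milnor result, $X^{an}$ has the homotopy type of a CW complex of dimension $\le 4$. The relevant homotopy groups of $BU(2)$ in this range are
\[
\pi_2=\Z, \qquad \pi_4=\Z, \qquad \pi_5=\pi_4(S^3)=\Z/2.
\]
Obstruction theory in this range shows the following two facts:
\begin{enumerate}[noitemsep,topsep=1pt]
	\item $(c_1^{top},c_2^{top})\colon[X^{an},BU(2)]\to H^2(X^{an},\Z)\times H^4(X^{an},\Z)$ is a bijection;
	\item every pair of classes is realized, since the relevant obstructions live in $H^{\ge 5}(X^{an})=0$.
\end{enumerate}
Consequently $\mathcal{E}^{an}$ is algebraizable if and only if there is an algebraic bundle $E$ whose Chern classes $(c_1(E),c_2(E))$ map under $cl$ to $(c_1^{top},c_2^{top})$. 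So part (1) becomes a purely algebraic statement: a pair $(c_1,c_2)\in CH^1(X)\times CH^2(X)$ is the Chern class pair of a rank 2 algebraic bundle on a smooth affine fourfold if and only if $Sq^2c_2+c_1c_2=0$ in $CH^3(X)/2$.

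\textbf{Algebraic side.} To prove this I would climb the $\aone$-Postnikov tower of $BGL_2$, working over $B\Gm$ to handle $c_1$; equivalently, I would twist $BSL_2\to BGL_2\to B\Gm$ by the determinant. The homotopy sheaves are Morel's:
\begin{itemize}[noitemsep,topsep=1pt]
	\item $\pi_1^{\aone}=\Gm$;
	\item $\pi_2^{\aone}=\pi_1^{\aone}(\aone^2\setminus 0)=K^{MW}_2$;
	\item $\pi_3^{\aone}=\pi_2^{\aone}(\aone^2\setminus 0)$, which by Asok--Fasel is an extension involving $K^{Sp}_3$ and a quotient of $K^M_4/12$.
\end{itemize}
The stages then go as follows.
\begin{enumerate}[noitemsep,topsep=1pt]
	\item Lifting through the first stages amounts to choosing $c_1\in CH^1(X)$ and a class in $\widetilde{CH}^2(X,\det)$. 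Over $\cplx$, and in this dimension, the Chow--Witt class can be traded for $c_2\in CH^2(X)$ using the fact that $I^j$-cohomology vanishes for $j$ large, since $-1$ is a square.
	\item The next $k$-invariant is a map $K(K^{MW}_2,2)\to K(\pi_3^{\aone},4)$, twisted by $\det$. After reduction, it is detected by a class in $H^3(X,K^M_4/2)$. Over the complex numbers this group is identified with $CH^3(X)/2$ via the Gersten resolution and Bloch--Kato. The key computation is to identify this $k$-invariant universally with $Sq^2c_2+c_1c_2$. I would do this by evaluating on a motivic Eilenberg--Mac Lane/Chern class model and using Voevodsky's description of mod 2 motivic Steenrod operations.
	\item The higher obstructions live in $H^4(X,\pi_4^{\aone}\text{-type sheaves})$ and above. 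Higher cohomology vanishes because the Nisnevich cohomological dimension is $4$. For the remaining groups in degree $4$, I would show that they vanish or are divisible-and-killed using Suslin's results on $CH^4$ of affine fourfolds over $\cplx$ (namely that $CH^4(X)$ is uniquely divisible) and the vanishing of $H^4(X,K^M_5/n)$.
\end{enumerate}
The main obstacle is the identification of the $k$-invariant in step 2, together with making sure the indeterminacy in the choice of lift (the freedom in $(c_1,c_2)$) is exactly accounted for by the phrase ``we may find $(c_1,c_2)$'' in the statement.

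\textbf{Part (2).} Here I would construct a smooth affine fourfold with the following properties:
\begin{itemize}[noitemsep,topsep=1pt]
	\item $cl_1$ and $cl_2$ are isomorphisms;
	\item there is a class $c_2$ with $Sq^2c_2\ne 0$ in $CH^3(X)/2$.
\end{itemize}
Such a class is invisible topologically, because the topological $Sq^2$ lands in $H^6(X^{an})=0$. I would take a Jouanolou device, or an affine open of a finite-dimensional algebraic approximation to a classifying space $BG$ (for instance $G=\Gm\times\mu_2$-type or $SL_2$-type groups). The point of this choice is that motivic cohomology is computable there and $Sq^2$ acts nontrivially in Chow groups mod 2. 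I would then cut down to dimension $4$ by generic hyperplane sections, using a Lefschetz-type argument. Finally I would verify that the restricted class survives in $CH^3/2$ and that $cl_1,cl_2$ remain bijective. With part (1), the topological bundle having these Chern classes then has no algebraic structure.
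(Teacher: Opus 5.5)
Your topological reduction is correct: $BU(2)\to K(\Z,2)\times K(\Z,4)$ is $5$-connected, so on a complex of dimension $\le 4$ rank $2$ bundles are classified by $(c_1^{top},c_2^{top})$ and every pair occurs. The ``only if'' half of (1) is just the Wu formula $Sq^2c_2=c_1c_2+c_3=c_1c_2$ in $CH^3(X)/2$.

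The gap is in your algebraic step 1, which discards precisely the obstruction you are looking for. The sequence $0\to\mathbf{I}^3\to\mathbf{K}^{MW}_2\to\mathbf{K}^M_2\to 0$, twisted by $L=\det$, gives an exact sequence $\widetilde{CH}^2(X,L)\to CH^2(X)\xrightarrow{\partial_L}H^3(X,\mathbf{I}^3(L))$. On a complex fourfold $\mathbf{I}^j$ vanishes only for $j\ge 5$, so $c_2$ need not lift to Chow--Witt theory. What is true is the following.
\begin{itemize}
\item On $X$ one has $\mathbf{I}^4=\mathbf{I}^4/\mathbf{I}^5=\mathbf{K}^M_4/2$, since $\mathbf{I}^5|_X=0$.
\item $H^3(X,\mathbf{K}^M_4/2)=0$: in the Bloch--Ogus spectral sequence this group survives to $E_\infty$ and would contribute to $H^7_{et}(X,\mu_2)$, which vanishes because $X$ is affine of dimension $4$.
\item Hence $H^3(X,\mathbf{I}^3(L))$ injects into $H^3(X,\mathbf{K}^M_3/2)=CH^3(X)/2$.
\item The composite $CH^2(X)\to CH^3(X)/2$ is $Sq^2+c_1(L)\cup$ (Totaro for trivial $L$, Asok--Fasel in general).
\end{itemize}
So $Sq^2c_2+c_1c_2$ is exactly the obstruction to lifting $(c_1,c_2)$ to $\tau_{\le 2}BGL_2$.

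Your steps 2 and 3 therefore look in the wrong place.
\begin{itemize}
\item A $k$-invariant $\tau_{\le 2}BGL_2\to K(\pi_3^{\aone},4)$ produces an obstruction in $H^4(X,\pi_3^{\aone}(BGL_2)(L))$, not in any $H^3$.
\item The group $H^3(X,\mathbf{K}^M_4/2)$ is not $CH^3(X)/2$; that is $H^3(X,\mathbf{K}^M_3/2)$. As just noted, $H^3(X,\mathbf{K}^M_4/2)$ is zero.
\item $H^4(X,\pi_3^{\aone}(BGL_2)(L))$ itself vanishes here. The $\mathbf{K}^{Sp}_3$-part contributes nothing in top degree, since its fourth contraction is an odd shifted Witt sheaf. $H^4$ of the remaining piece is a quotient of $CH^4(X)/n$, which is $0$ by divisibility of $CH^4(X)$.
\item For $\pi_n$ with $n\ge 4$ the obstructions lie in $H^{n+1}=0$, so step 3 is off by one degree.
\end{itemize}
Combined with this vanishing, your step 1 would imply that every pair $(c_1,c_2)$ is realized, contradicting (2).

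The correct sufficiency argument runs as follows. The condition $Sq^2c_2+c_1c_2=0$ forces $\partial_L(c_2)=0$, so $c_2$ lifts to $\widetilde{CH}^2(X,L)$. Every such class is then realized by a rank $2$ bundle, because $H^4(X,\pi_3^{\aone}(BGL_2)(L))=0$ and all higher obstruction groups vanish.

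For (2), your sketch has no mechanism that actually produces the required class.
\begin{itemize}
\item A Jouanolou device of a projective $Y$ has dimension $\ge 2\dim Y$ by Andreotti--Frankel. In dimension $4$ this forces $\dim Y\le 2$, and then $CH^3=0$.
\item Hyperplane sections are controlled by Lefschetz only in singular cohomology. They give no control of $CH^3(-)/2$ or of bijectivity of $cl_2$.
\end{itemize}
What you need is an affine fourfold on which $Sq^2c_2+c_1c_2$ is a nonzero class in $CH^3(X)/2$, necessarily invisible in $H^6(X^{an})=0$. As the survey remarks, producing such examples rests on a failure of the integral Hodge conjecture, and that input is missing from your plan.
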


These results answer comparison questions that classical methods left open.  It is not surprising, in a sense, that Question~\ref{question:griffiths} admits a negative answer in higher dimensions since the integral version of the Hodge conjecture is false in higher dimensions.  In effect, the construction of non-trivial examples as above relies on the failure of the integral Hodge conjecture.  

In a different direction, theorems like Theorem~\ref{thmintro:cellularalgebraizability} from the introduction were suggested by some specific algebraizability results, e.g., those formulated in \cite{AFHRees}.  The collapse map ${\mathbb P}^n \to S^{2n}$ allows one to build rank $2$ topological vector bundles on ${\mathbb P}^n$ from elements in homotopy of $S^3 \cong SU(2)$.  In \cite{AFHRees}, explicit motivic homotopy classes lifting families of elements in odd homotopy of $S^3$ are constructed, giving rise to numerous non-trivial algebraic vector bundles on any smooth affine variety equivalent in $\Spc(k)$ to ${\mathbb P}^n$, e.g., the variety $\widetilde{{\mathbb P}^n}$ from the introduction.

Viewed from this perspective, the comparison problems of the introduction appear almost inevitable. They emerge as the meeting point of several mathematical developments, each of which seems naturally to call for comparison between algebraic, analytic, and topological objects.


\section{A genealogy of algebraic-analytic comparison problems}
\label{s:genealogy}
The preceding section reconstructed one route by which the mathematics of the 1950s and 1960s can be understood as leading toward the comparison problems of the introduction. Such a reconstruction is mathematically illuminating, but it also has a characteristic effect: it makes those problems appear almost inevitable. The historical question is different. Rather than asking how these mathematical developments fit together from the standpoint of the present, we ask how comparison problems of the kind studied here became mathematically meaningful to the mathematicians who first posed them. To that end, we sketch one genealogy of these problems, following several intertwined themes in the development of mathematics from the middle of the nineteenth century through the middle of the twentieth.  Our concern is not priority, but the emergence of a mathematical vocabulary in which these questions could be posed.

The importance of such an analysis was already recognized, at least in broad outline, by David Hilbert.  In the text of his 1900 ICM address, Hilbert included a long meditation on the nature of problems in mathematics, before actually stating his now famous problems.  In these meditations, he asserts \cite[pp. 437-438]{Hilbert1902MathematicalProblems}:  
\begin{quote}
	We know that every age has its own problems, which the following age either solves or casts aside as profitless and replaces by new ones...It is difficult and often impossible to judge the value of a problem correctly in advance; for the final award depends upon the gain which science obtains from the problem....It should be to us a guide post on the mazelike paths to hidden truths, and ultimately a reminder of our pleasure in the successful solution.
\end{quote}
Hilbert's framing emphasizes the historically contingent nature of mathematical problems. The problems that mathematicians regard as significant change with shifting interests, methods, and conceptual frameworks, and those transformations themselves reflect the evolving character of mathematical understanding.  

Bearing this in mind, our aim here is to narrate a genealogy of a problem posed by Jean-Pierre Serre in a S\'eminaire Bourbaki talk in May 1953, which one might view as a primordial model of the kind of question posed in the introduction.  One mechanism by which mathematical problems are routinely refashioned is almost unwitting: when stating a problem posed in an earlier historical period, we recast the problem by alternately modernizing terminology or subtly modifying the formulation to account for developments after the problem was posed.  In the interest of avoiding such subtle changes, we reproduce Serre's statement in its original form \cite{SerreFSBBKI}; fortunately, the mathematical terminology employed is similar enough to modern usage that all that is necessary is some explanation of the notation, which we provide afterwards.
\begin{center}
	\includegraphics[width = .9\textwidth]{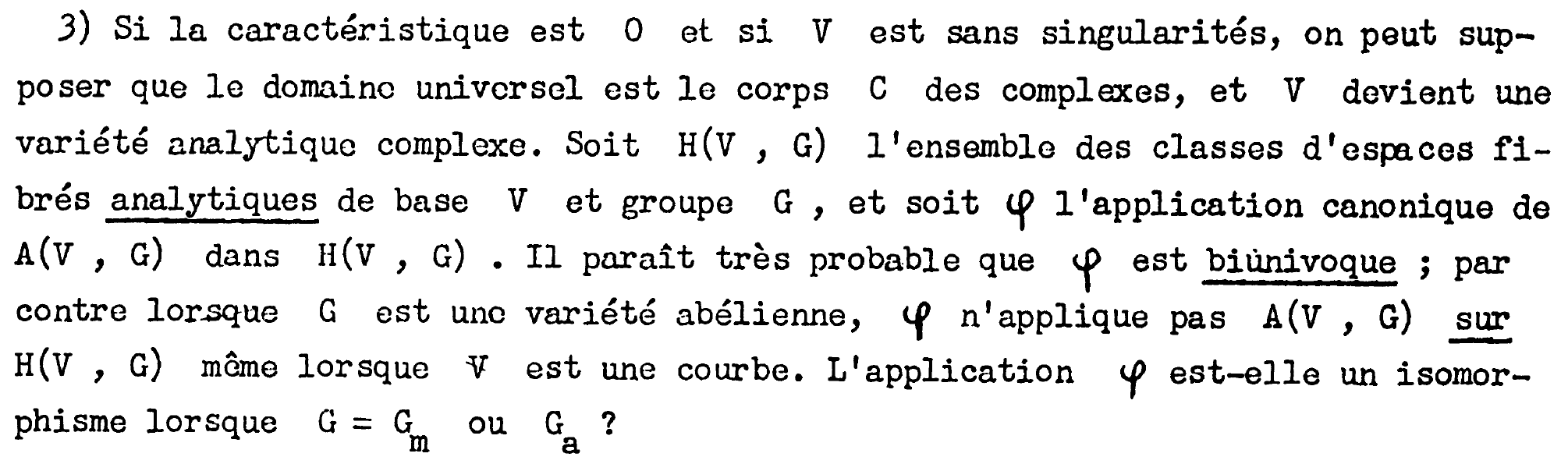}
\end{center}

Let us first clarify the problem statement.\footnote{In the interest of readability, let us give a somewhat literal translation: ``If the characteristic is $0$ and $V$ is nonsingular, we can suppose that the universal domain is the field $\cplx$ of complex numbers, and $V$ becomes a complex analytic variety.  Let $H(V,G)$ be the set of classes of analytic fiber spaces with base $V$ and group $G$, and let $\varphi$ be the canonical map from $A(V,G)$ to $H(V,G)$.  It appears very probable that $\varphi$ is bijective; by contrast, when $G$ is an abelian variety, $\varphi$ does not map $A(V,G)$ onto $H(V,G)$ even when $V$ is a curve.  Is the map $\varphi$ an isomorphism when $G = G_m$ or $G_a$?''}  The focus is evidently on complex algebraic varieties; for an explication of the notation set out here, one must revisit some earlier points in the document.  In Section 1, Serre explains that $A(V,G)$ is the set of isomorphism classes of algebraic fiber bundles with base $V$ and group $G$, explicitly a group-variety of some form; such fiber bundles are assumed locally trivial in the {\em Zariski} topology on $V$ (indeed, finer notions of local triviality in the algebraic setting had not yet been invented).  Throughout the note, Serre explicitly specifies the additional hypothesis that $V$ is complete when he views that as necessary, but that hypothesis is absent here even if it evidently carries special interest.  

Bearing all this in mind, one could ask: why would one pose this question in the first place?  We will avoid the answer ``because one can'': the problem requires enough terminology and mathematical background that a reasonable reader might require more information to indicate why one should spend time studying this problem.  Said differently, what features of the mathematical landscape supported interest in comparison of algebraic and analytic fiber bundles at this particular time?  Our goal in the subsequent discussion is not to provide a ``proper'' historical analysis, whatever that might mean, but rather to highlight a few contingent factors relevant to the problem being mathematically meaningful.  We aim to argue in support of the thesis that this problem of Serre's would not have been formulated before he did so.  Importantly, the further consideration of the problem was context dependent: Serre posed this problem in the Bourbaki seminar, a group of mathematicians with related common culture.  A much more detailed analysis of contingent factors around this problem can be found in the first author's forthcoming book (though see \cite{Asok} for a preliminary account). 

\subsection{Continuity and the intuitive nature of functions}
\label{ss:continuity}
Algebraic geometers routinely refer to the Greek heritage of the subject, perhaps implicitly in reference to conic sections (see, e.g., \cite{Dieudonne1985HistoryAG}).  However, the idea of ``algebraic geometry'' as something about ``algebraic functions'' could not exist until a suitably refined notion of function existed, which seems to have crystallized around the 17th century.  To situate our analysis of Serre's problem, let us turn back the clock a bit and analyze the role of ``algebraic'' functions within the broader investigation of the function concept.  

Using the term analytic for the time being to refer to any kind of ``definite'' formulaic regularity, the notion of analytic expression of functions is connected with Fermat and Descartes \cite[p. 36]{medvedev1991scenes}.  In the context of geometry, Descartes' {\em La Geometrie} \cite{descartes1954geometry} can perhaps be viewed as an attempt to systematically approach problems of geometry by algebraic (e.g., ``arithmetical'') means.  A fundamental shift appears in work of Newton: Newton mastered Descartes in his early years, but had a rather conflicted relationship with arithmetization, seemingly asserting the primacy of geometry over Cartesian  representation.\footnote{See \cite[Part II]{Guicciardini2011NewtonCertaintyMethod} for a detailed analysis of Newton's views on Descartes.  For a briefer discussion, relevant in the context of mathematical practice, we mention \cite[\S 4.1]{Arana2023-ARAMHO}.}  Nevertheless, Newton popularized the use of ``explicit'' descriptions of function \cite[\S 2.4]{medvedev1991scenes}, though the name ``function'' itself was seemingly first used by Leibniz \cite[p. 58]{monna1972function}.  

By the 18th century, a conception of at least a ``function of one variable'' (possibly complex) had been proposed: a function was an analytical expression in a variable, composed by use of addition, subtraction, multiplication, division, extraction of roots, trigonometrical and logarithmic operations (see \cite[p. 58]{monna1972function} and \cite[\S 2.5]{medvedev1991scenes}).  By the early 19th century, this notion of function as formula had stabilized somewhat, though the nature of such formulas begins to shift, in part because of the influence of trigonometric series.  Such considerations engendered more refined notions of continuity. At the beginning of the 19th century, there were routine attempts to prove that ``continuous'' functions were even differentiable at perhaps some isolated bad points.

In 1829, Dirichlet had written down his nowhere continuous function (the function taking the value $1$ on the rational numbers and $0$ on irrational numbers) as a limit of trigonometric functions \cite[p. 169]{dirichlet1829convergence}.   Lobachevsky writes in 1837:
\begin{quote}
	The general concept of a function requires that a function of $x$ be defined as a number given for each x and varying gradually with x. The value of the function can be given either by an analytic expression, or by a condition that provides a means of examining all numbers and choosing one of them; or finally, the dependence may exist and remain unknown.  
\end{quote}
Dirichlet gives a similar sounding definition around the same time, but explicitly restricts his attention to continuous functions \cite[\S 2.8]{medvedev1991scenes}.  This Dirichlet--Lobachevsky definition of ``function'' as a source for the modern notion is clouded by some ambiguity around the importance of continuity.  Nevertheless, by this point one could reasonably speak of ``familiar'' functions given by analytic expressions.

The notion of function underwent another shift in the latter part of the 19th century.  Riemann asserted that an explicit trigonometric series was continuous but nowhere differentiable, without providing any indication of justification.  Weierstrass began his famous note providing an explicit such function in the early 1870s.  Once again, Weierstrass's example arose from considerations involving Fourier series \cite{Weierstrass1895Continuous}.  

Complementing this construction, in his seventies, Weierstrass established results that we now package as his famous ``approximation theorem'' \cite{Weierstrass1885AnalytischeDarstellbarkeit}.  Indeed, for any $f$ that is continuous and bounded on $\real$, setting
\[ 	
F(x,k) := \frac{1}{k \sqrt{\pi}} \int_{-\infty}^{\infty} f(u) e^{-(\frac{u-x}{k})^2} du, 
\]
he showed that
\[
\lim_{k \to 0^+} F(x,k) = f(x),
\]
where, in modern terminology, convergence is uniform on compact intervals.  Importantly, the function $F(-,k)$ is entire for $k > 0$.  As consequences of this observation, two other facts follow by straightforward means: i) any continuous function on a compact interval can be approximated by polynomials (i.e., what we nowadays usually refer to as the Weierstrass approximation theorem), and ii) if $f$ is a continuous and bounded function, then $f$ can be represented (in many ways) by an infinite series of polynomials, which converges absolutely pointwise, and uniformly on intervals.  

Further examples only laid bare the disjunction between continuity and ``familiar'' functions in stark relief, leading Poincar\'e to his famous statement \cite{PoinareDefinitions}:
\begin{quote}
	Logic sometimes breeds monsters. For half a century, we’ve seen the emergence of a host of bizarre functions that seem to strive to resemble honest, useful functions as little as possible. No continuity, or continuity but no derivatives, etc. Moreover, from a logical standpoint, these strange functions are the most general; those encountered unintentionally now appear only as a special case.  They have only a tiny corner left.
\end{quote}
By the beginning of the 20th century, the work of Cantor, Baire, Banach, etc. established that Poincar\'e's statement was no rhetorical flourish: in a precise sense, most continuous functions are nowhere differentiable, thus establishing ``true'' continuity as a counter-intuitive phenomenon.  

Weierstrass's proof of his approximation theorem, as indicated above, was by construction formulaic.  Simultaneously, and frequently identified with Hilbertian formalism, the beginning of the 20th century saw a shift among conceptions of problems within pure mathematics: it became permissible to eschew {\em construction} of solutions and focus instead on the problem of existence of a solution \cite[p. 336]{monna1983algebraic}.  This shift notwithstanding, the Weierstrass approximation theorem, in the polynomial form, was given a constructive proof by Sergei Bernstein.  These results solidify the idea of polynomials as computable avatars of abstract and potentially inscrutable ``general'' continuous functions.

\subsection{Modernizing spaces: algebraic varieties among manifolds}
The notion of manifold is frequently tracked to Riemann,\footnote{For a more detailed analysis of the evolution of the manifold concept, we refer the reader to \cite{Scholz}} but for Riemann the notion was rather more of a psychological construct.  His famous essay based on his Habilitationvortrag, published as {\em The hypotheses on which geometry is based} begins by begging his readers' indulgence for making philosophical speculations about $n$-dimensional entities \cite[p. 257]{Riemann}.  Spaces under consideration arose from geometry and physics and Riemann's introduction of the manifold notion is based on measurement.  Riemann's description lays bare another implicit assumption we frequently make in describing manifolds as he mentions points or elements, long before set-theoretic conceptions were used in mathematics.

In contrast to his frequently physical motivations, Riemann's view of ``continuous'' manifolds can be read as suggesting a disjunction between the needs of mathematics and physics:
\begin{quote}
	...occasions which give rise to notions whose measurement involves the consideration of continuous manifolds are so  rarely encountered in everyday life that the location of material objects perceived through the senses, and colors, are perhaps the only simple examples of concepts whose modes of determination constitute a multi-dimensional manifold. Not until we enter the realm of higher mathematics does the need to create and develop such concepts make itself felt. 	
\end{quote}
Insofar as mathematical justifications are concerned, Riemann writes: ``Such studies have become a necessity in various parts of mathematics, notably in the treatment of multi-valued analytic functions. The lack of these studies may well be one of the main reasons why Abel's famous theorem and the contributions of Lagrange, Pfaff, and Jacobi to the general theory of differential equations have remained unfruitful for so long.'' 

The notion of manifold remained vague for quite some time; Felix Klein speaks of {\em manifoldness}.  Poincar\'e opens {\em Analysis Situs} with the assertion ``Nobody doubts nowadays that the geometry of n dimensions is a real object''.  If analysis had become the primary means of interrogating geometry, leaving geometry as secondary, Poincar\'e explicitly hopes to re-establish the primacy of geometry, a tradition going back to Newton.  He writes \cite{PoincareTrans}:
\begin{quote}
	But why, it may be said, not preserve the analytic language and replace the language of geometry, as this will have the advantage that the senses can no longer intervene. It is that the new language is more concise; it is the analogy with ordinary geometry which can create fruitful associations of ideas and suggest useful generalizations.
\end{quote}
He goes on to explain points where the analytic approach is, in his view, entirely unsuitable. 

While it is natural for us now to describe a world of embedded smooth manifolds as ``cut out from Euclidean space'' by finitely many equations, this notion does not appear until Poincar\'e's {\em Analysis Situs} in 1895; his first definition of manifold (he gives several) is phrased precisely in terms of a system of simultaneous equations and inequalities.  Importantly, for Poincar\'e the defining functions are continuous with continuous first partial derivatives and the matrix defined by the partial derivatives of the equations with respect to the given variables has maximal rank at points where the equalities are satisfied.  This is effectively our modern notion of embedded ($C^1$) differentiable manifold, and certainly one we suggest to students as {\em intuitive}.  However, Poincar\'e, building on Riemann's ideas around analytic continuation, also gives another definition in terms of overlapping patches.  

In {\em Analysis Situs}, Poincar\'e builds tools for a qualitative theory of manifolds, and he provides several justifications for such a theory.  Among the examples he lists, I emphasize two:
\begin{quote}
	The classification of algebraic curves into types rests, after Riemann, on the classification of real closed surfaces from the point of view of Analysis situs. An immediate induction shows us that the classification of algebraic surfaces and the theory of their birational transformations are intimately connected with the classification of real closed hypersurfaces in the space of five dimensions from the point of view of Analysis situs. M. Picard, in a memoir honoured by the Acad\'emie des Sciences, has already insisted on this point.
	
	Then again, in a series of memoirs in Liouville's journal, entitled: {\em Sur les courbes d\'efinies par les \'equations differentielles} I have employed the ordinary analysis situs of three dimensions in the study of differential equations. The same researches have been pursued by M. Walther Dyck. We can easily see that generalized Analysis situs will permit us to treat higher order equations in the same way, in particular, the equations of celestial mechanics.
\end{quote}
But this discussion mentions no distinguished place for algebraic varieties among manifolds more generally.  Let us note that the ``modern'' notions of topological space did not appear until Hausdorff's famous 1914 work, and the ``modern'' notion of differentiable manifold only crystallized in the work of Veblen and Whitehead from the 1920s and early 1930s, stabilizing with the work of, e.g., Whitney from this period.\footnote{Again, see the analysis of \cite{Scholz} for further discussion of the manifold concept.}

The idea of a separation between algebraic varieties and manifolds owes much to the work of Andr\'e Weil.  For example, his analogical use of the tools of the Italian school of algebraic geometry to approach the Riemann hypothesis for varieties over finite fields is laid out in a letter to Emil Artin in the early 1940s, though his correspondence with Hasse in the 1930s already uses much geometric language (see \cite[Chapter 10]{roquette2018riemann} for further discussion).  In the introduction to his {\em Foundations of algebraic geometry}, Weil writes explicitly \cite{WeilFoundations}:
\begin{quote}
	this book has arisen from the necessity of giving a firm basis to Severi's theory of correspondences on algebraic curves, especially in the case of characteristic $p \neq 0$ (in which there is no transcendental method to guarantee the correctness 
	of the results obtained by algebraic means), this being required for the solution of a long outstanding problem, the proof of the Riemann hypothesis in function-fields. 
\end{quote}
Here, Weil's reference to ``transcendental methods'' is an implicit reference to Riemann's use of complex function theory, and later topological techniques.

This theme of separation appears again when Weil points to a specific source of a fracture between algebra and geometry in his address to the 1954 International Congress of Mathematicians \cite{Weil1956Abstract}.
\begin{quote}
	The most decisive progress ever made in the theory of algebraic curves was achieved by Riemann precisely by introducing such methods. Later authors took considerable pains to obtain the same results by other means. In so doing, they were motivated, at least in part, by the fact that Riemann had given no justification for Dirichlet's principle and that it took many years to find one. 
\end{quote}
Once again, ``such methods'' refers to transcendental techniques, especially complex function theory, and it is reasonable to ask what a ``theory of algebraic curves'' might even mean before Riemann.  

Weil also self-consciously remarks about the ``extension'' of algebraic geometry to positive characteristic.
\begin{quote}
	As to denying any existence to algebraic geometry of non-zero characteristic, not merely would this, in view of recent developments, amount to denying motion; it would also deprive
	algebraic geometry of a rich and promising field of possible applications to number-theory, where one cannot do without reduction modulo $p$.
\end{quote}
Moreover, not until the late 1940s was it widely agreed that algebraic varieties exist as independent geometric entities with the new conception of Zariski topology.

To demarcate algebraic geometry as a subject might also indicate some methodological uniformity provided by the use of ``algebraic techniques'', but nothing could be further from the truth.  Mumford reminisces about the writing of his notes on algebraic geometry from the early 1960s suggesting methodological anarchy \cite{Mumford1999RedBook}:
\begin{quote}
	It may be of some interest to recall how hard it was for algebraic geometers, even knowing the phenomena of the field very well, to find a satisfactory language in which to communicate to each other. At the time these notes were
	written, the field was just emerging from a twenty-year period in which every researcher used his own definitions and terminology, in which the "foundations" of the subject had been described in at least half a dozen different mathematical
	"languages".
\end{quote}
Mumford goes on to describe work of ``the Italian school'', Zariski, Weil, and he does not even mention the work of Chevalley; highlighting Weil and his Foundations above was for narrative convenience.  

One key distinction between the Weil and Zariski approaches was Weil's insistence on considering ``abstract'' algebraic varieties, which included two distinct features.  On the one hand, this referenced techniques: ``Let us call "abstract" those methods
which, being basically algebraic, are essentially applicable to arbitrary ground fields; this includes for instance the theory of differentials of the first, second and third kinds (but of course not that of their integrals) and the greater part of the "geometric" proofs of the Italian school.''  However, it also impinged on the precise objects of study: Weil's Foundations introduced {\em abstract varieties}, motivated by analogy with the concept of abstract manifold.  Meanwhile, Zariski and those in his orbit were content to study ``classical'' projective varieties, i.e., those that correspond to embedded manifolds.  

The divergence between the ``classical'' and ``abstract'' streams lays the groundwork for rendering problems like ``characterize nonsingular algebraic varieties among manifolds'' as meaningful.  Focusing on this question is not arbitrary.  John Nash, in his address to the 1950 ICM entitled ``Algebraic approximations of manifolds'' \cite{Nash1952Algebraic}, explicitly tracing questions like this back to work of Seifert in the mid 1930s, considers exactly this kind of question in the context of algebraic varieties defined by real equations.  Nash is also conscious that ``classical'' algebraic geometry as it was conceived displayed insufficiencies: ``Since the terminology of classical algebraic geometry is inadequate for our purposes, we must make a few definitions'' and subsequently proposes a definition suitable to his immediate needs, thus isolating the notion that we now call {\em Nash} functions.  Likewise, at this point, given manifolds that are algebraically defined, one can meaningfully ask whether a map between two such objects admits a polynomial representation as well.

\subsection{Fiber bundles: examples or avatars?}
\label{ss:fiberbundles}
Serre's Bourbaki lecture makes heavy reference to a document of Weil's: a transcript of a talk given at Chicago in 1949 entitled {\em Fiber spaces in algebraic geometry} \cite{WeilI}, expanded into lecture notes during a series of lectures Weil gave in the early 1950s \cite{WeilFBCourse}.  This raises the question of the role of fiber spaces in mathematics of the day and, more precisely, how vector bundles came to be viewed as central within that domain.

In 1950, Norman Steenrod wrote {\em The topology of fibre bundles} \cite{SteenrodFB}.  The preface to this work opens with:
\begin{quote}
	The recognition of the domain of mathematics called fibre bundles took place in the period 1935-1940. The first general definitions were given by H. Whitney. His work and that of H. Hopf and E. Stiefel demonstrated the importance of the subject for the applications of topology to differential geometry...The subject has attracted general interest, for it contains some of the finest applications of topology to other fields, and gives promise of many more. It also marks a return of algebraic topology to its origin; and, after many years of introspective development, a revitalization of the subject 
	from its roots in the study of classical manifolds.  
\end{quote} 

Here, Steenrod's reference to importance for applications can be traced to several classes of examples.  First, there is Hopf's early work on the Hopf fibration, which provided a homotopically non-constant map $S^3 \to S^2$, itself a culmination of a long string of work building from Brouwer's notion of degree to more complicated questions about classification of continuous maps between spheres.  Seifert, for wholly different reasons related to the study of $3$-manifolds (a subject which had grown in interest in part from Poincar\'e and Birkhoff's work on dynamics), also began an analysis of a class of fiber spaces.

The work of Stiefel--Whitney to which Steenrod alludes constitutes the genesis of the theory of characteristic classes, which were linked to questions about linearly independent vector fields generalizing the so-called Poincar\'e--Hopf theorem.  By the end of the 1940s, there was also work, e.g., by Eilenberg, on further extension of the problem of classifying continuous maps up to homotopy using a stepwise procedure that would eventually come to be codified as obstruction theory.  

The introspective development alluded to by Steenrod is the development of Poincar\'e's {\em Analysis Situs} from Betti numbers/torsion numbers to a plethora of different homology and cohomology theories.  These different homology theories were systematized in the axiomatic approach of Eilenberg and Steenrod.  And by the early 1940s, Whitney had formulated a general notion of fiber bundle and had emphasized the role of bundles in his approach to the study of differentiable manifolds \cite{WhitneyFB}.  

Whitney's 1935 paper \cite{Whitneyspherespace} introduces what he calls plane bundles, but it would be misleading simply to identify these with modern vector bundles. Although each fiber carries the structure of a vector space, Whitney's interest is almost entirely topological, and almost immediately he replaces the bundle of planes by its associated bundle of unit spheres (\cite[\S 10]{Whitneyspherespace} explains precisely this reduction). From the standpoint of the problems Whitney wished to study, e.g., characteristic classes, the sphere bundle retained essentially all of the relevant information while having compact fibers. In other words, the linear structure on the fibers was auxiliary rather than intrinsic.

This emphasis was not unique to Whitney, nor to the United States, and one sees parallel streams of development in Europe.  Feldbau's 1939 note \cite{feldbau_classification_1939}, while considerably broadening the notion of a fiber bundle to allow what he called "essentially arbitrary" fibers, likewise concentrated on compact fibers in the applications.  Feldbau also presents a difficult case around the communication of mathematics: he was Jewish and died under deportation before the end of
World War II.  Subject to censorship under laws of the Vichy government
beginning in 1941 (and sometimes publishing under a pseudonym), it is
unclear how to assign credit to the works authored jointly by Ehresmann \cite{ehresmann_feldbau_homotopie_1941} or even solely by Ehresmann \cite{ehresmann_espaces_associes_1941}, and furthermore to assess the visibility of these works.  Such questions have been explored in depth by Audin \cite{audin_feldbau_2009}.  Questions of transmission are therefore not external to our narrative: which mathematical formulations survived and circulated inevitably shaped which problems later became mathematically meaningful, and to whom.

The end of the war did not immediately bring conceptual consolidation, even as mathematical communication gradually resumed. Even after the publication of Whitney's Michigan lectures and the parallel work of many others, the definition of a fiber bundle itself remained unsettled. Several competing formulations coexisted, each adapted to somewhat different classes of examples. The mathematical community therefore inherited a fragmented rather than standardized vocabulary, and within this evolving framework the modern notion of a vector bundle emerged only gradually.

Chern's 1946 paper introducing characteristic classes for Hermitian manifolds \cite{Chern1946} likewise proceeds through the associated sphere bundle. Although the underlying geometry is linear and Hermitian, the characteristic-class construction is formulated in terms of sphere bundles. This should not be regarded as a merely archaic presentation of a modern vector-bundle argument. Rather, it reflects the conceptual vocabulary then available: the compact sphere bundle remained the natural topological object on which transgression and obstruction-theoretic constructions could be performed. The resumption of mathematical communication after the war did not immediately standardize that vocabulary. Remarkably, even in the Cartan seminar of late 1949, Blanchard begins his discussion of fiber bundles with a ``Définition provisoire''.  The very notion of a fiber bundle was still regarded as provisional.

It is only toward the end of the 1940s that one begins to see the linear structure itself isolated as of independent interest. The terminology fibr\'e vectoriel appears in the Cartan Seminar (see, e.g., \cite[6-12]{Cartan1949GeneralitesI}).  In a terminological direction, in {\em The Topology of Fibre Bundles}, Steenrod speaks of ``linear bundles'' and ``tensor bundles'', as well as of tangent bundles, but apparently never uses the expression ``vector bundle''. In \S 12.10 he remarks that, in studying a tangent bundle, it is frequently more convenient to replace it by the associated sphere bundle, both because the fiber dimension drops by one and because the resulting fiber is compact. Vector-space fibers were certainly present throughout the book, but the linear bundle was not yet unambiguously the privileged object. From the viewpoint of much contemporary topology, it was often a device from which one extracted a principal bundle, a homogeneous-space bundle, or a compact sphere bundle better suited to obstruction theory and cohomological calculation.

While the terminological transition is important, and we will return to it momentarily, numerous results in this period lead to the transformation of fiber bundles from examples of other structures (e.g., manifolds) to objects of independent interest.  Sphere bundles, which were highlighted by Whitney, were later classified independently by Pontryagin and Steenrod, and simultaneously, Pontryagin isolated {\em vector bundles} as fiber bundles of particular interest.  Indeed, Pontryagin and Steenrod establish a classification result for sphere bundles/vector bundles by 1944, and these results are generalized to a homotopy classification result for more general fiber bundles with a given structure group.  These developments culminate in the opening chapters of Steenrod's text, where the classification of sphere bundles, linear bundles, and general bundles appears in parallel.

\begin{center}
	\includegraphics[width = .8\textwidth]{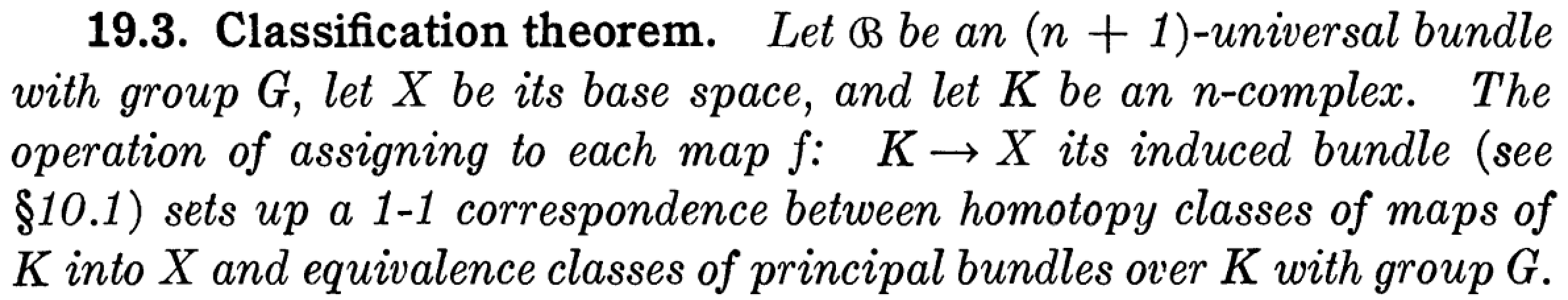}
	\vskip .5em 
	\includegraphics[width = .8\textwidth]{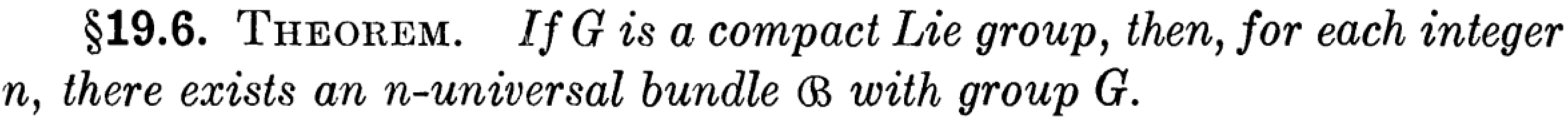}
	\vskip .5em 
	\includegraphics[width = .8\textwidth]{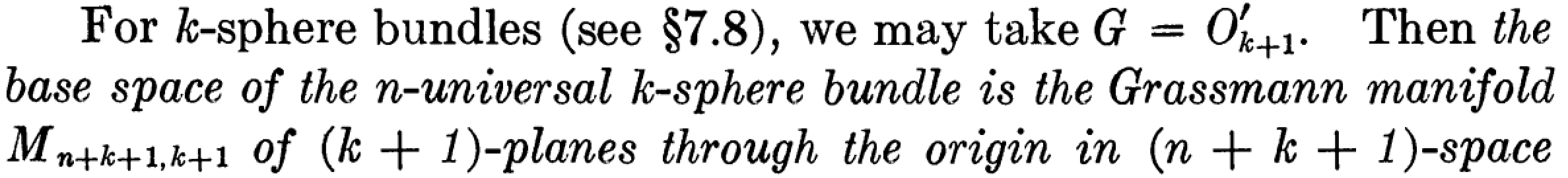}
\end{center}

Classification alone, however, did not establish vector bundles as the privileged class of fiber bundles. A further change occurred when Weil proposed that bundle theory should be systematically extended into complex analysis and algebraic geometry. In these settings the multiplicative or linear structure encoded by transition functions could not simply be discarded in favor of an associated sphere bundle. The transcript of Weil's 1949 talk begins:
\begin{quote}
	The notion of "abstract variety" makes it possible for the algebraic geometer to imitate closely the definitions and procedures which have been so fruitfully applied by modern topologists to the theory of fibre-bundles. 
\end{quote}

The chronology suggests a somewhat different genealogy from the natural claim that vector bundles arose in topology and were subsequently imported into complex analysis and algebraic geometry. Topology supplied the general concept of a fiber bundle, but did not, however, immediately establish vector bundles as the privileged class of such objects.  Instead, that distinction seems to have been driven by Weil’s attempt to transplant the fiber-bundle formalism into complex analysis and algebraic geometry: Weil viewed fiber bundles as a unifying theme in mathematics.  In the run-up to the 1950 ICM, Weil writes to Cartan \cite[p. 312]{CartanWeil}, imploring him to highlight the use of fiber bundles in complex analysis:  
\begin{quote}
	Once we get used to seeing fiber spaces in these questions, we quickly become convinced that they appear everywhere (or at least ``almost everywhere'') and that we gain enormously...It seems certain to me that almost all the problems where it is a question of gluing together local data to produce global data are of ``fiber space'' nature.
\end{quote}
Weil was speaking specifically of the Cousin problems, reinterpreting them as questions about the gluing data defining analytic bundles. Meanwhile, Cartan was reworking Leray's sheaf theory into a more usable local-to-global formalism. Their correspondence in August 1950 centers on analytic fiber bundles with group $\cplx^\times$, objects that would soon be described in the language of holomorphic line bundles, although Cartan and Weil themselves still formulate the problem in terms of bundle structures and Cousin data. In a postscript dated 5 August 1950, Cartan writes \cite[p.~315]{CartanWeil}:
\begin{quote}
	I wrote this letter last night.  This morning, I know how to show that a fiber bundle $E$ with group $\cplx^{\times}$ and base $B$ a domain of holomorphy is topologically trivial, it is analytically trivial.  The problem rests entirely on knowing that to every fiber bundle structure corresponds a Cousin datum that gives rise to it.
\end{quote}
The subsequent correspondence connects these questions with divisors, Kodaira's work, and Lefschetz's results on algebraic degree-two cohomology classes on K\"ahler manifolds. In this analytic setting, the structure discarded in passing to a sphere bundle had become indispensable.

\subsection{Algebraicity and continuity}
\label{ss:algvscont}
The reinterpretation of analytic questions in terms of fiber bundles quickly merged with another stream of ideas: the analytic theory of harmonic forms and their relationship to algebraic cycles. The relevant results emerged from several interacting mathematical traditions, especially the analytic tradition associated with H. Weyl and later W. V. D. Hodge.  One theme we highlight here is another attempt to distinguish algebraic varieties from manifolds more generally, in this case through the structure of their homology classes.

In his contribution to the 1950 ICM, W.V.D. Hodge wrote the following, which we reproduce in its entirety \cite[p. 184]{Hodge1952ICM}. 
\begin{center}
	\includegraphics[width = .8\textwidth]{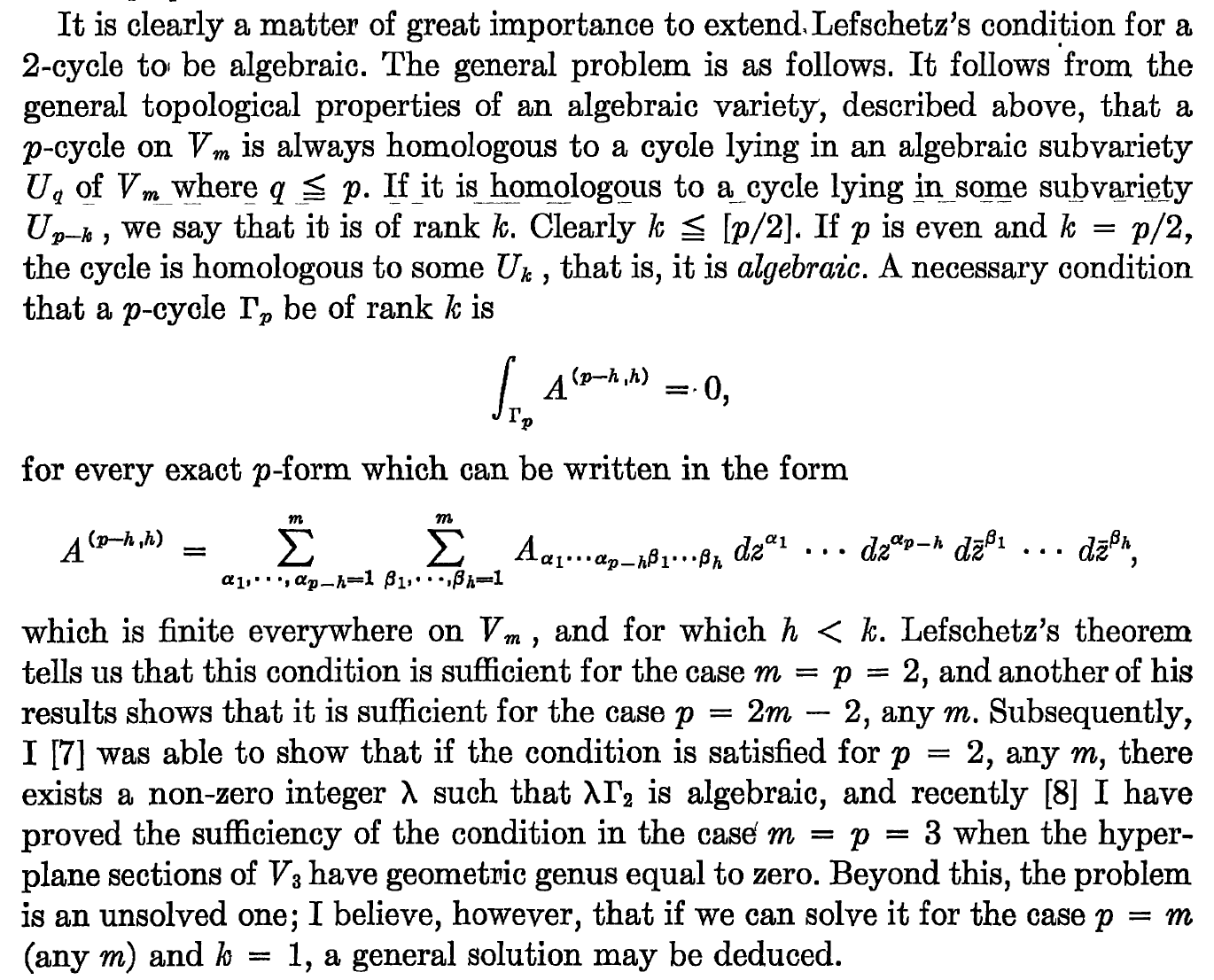}
\end{center}
What is written here has since become known as the Hodge conjecture, but only after some significant reconceptualization and clarification (one should keep in mind \cite{AtiyahHirzebruch1962Analytic} and later \cite{Grothendieck1969Hodge}); we highlight both Hodge's characterization of the problem as important, and the formulation in terms of homology.  
The Cartan--Weil correspondence shows that these questions were not merely parallel developments but were already being discussed in essentially the same language (see the letter of August 8, 1950 from Weil to Cartan \cite[p. 316]{CartanWeil}, just before the 1950 ICM).    

Hodge's formulation, as well as the setup described by Weil, is built on the analysis of harmonic representatives of cohomology classes on Riemannian manifolds from the 1930s extended to the complex analytic setting.  As recounted in various places, Hodge's original proof from the mid 1930s had a defect that was later rectified by ideas of H. Weyl.  These ideas were further clarified and extended in K. Kodaira's work from the late 1940s: see the paper on ``harmonic fields on Riemannian manifolds'' \cite{Kodaira1949HarmonicFields}, which builds on Weyl's work in the case of Euclidean space.  In 1949, around the time this paper was published, Weyl invited Kodaira to Princeton, where discussions with Weil and D.C. Spencer rapidly transformed these analytic ideas into a theory relating divisors, fiber bundles, and cohomology.

In the same letter in the Cartan--Weil correspondence mentioned above in August 1950, Weil formulates several conjectures linking analytic $\cplx^*$-bundles and divisors; in the analytic setting, Weil views the corresponding data as tantamount to the specification of ``Cousin data'' as discussed in the previous section.  By early September 1950 \cite[p. 318]{CartanWeil}, Weil states that he and Kodaira have demonstrated one of these conjectures, which amounts to a reformulation of the result of Lefschetz.  What is striking is not merely the result but the formulation:
\begin{quote}
	On a complex manifold $V$, let $u$ be a cohomology class of dimension $2$ (integer coefficients). For $u$ to be the invariant of a complex analytic fiber bundle with group $\cplx^*$, it is necessary and sufficient that $u$ be homologous to a differential form everywhere locally of the form $\sum f_{ij} dz_i d\bar{z}_j$; if $V$ is compact K\"ahler, it is necessary and sufficient for $u$ to be such an invariant that the harmonic form homologous to $u$ be of the type in question.
\end{quote}

Both the conjecture above and the other one to which we alluded make an appearance in a Seminaire Bourbaki talk by Cartan from December 1950 \cite[\S 8]{Cartan1952Bourbaki34}.  If $B$ is a compact analytic variety, Cartan observes that every analytic $\cplx^{\times}$-bundle gives rise to a class in $H^2(B,\Z)$.  He isolates two comparison problems: which elements of $H^2(B,\Z)$ are so-obtained?  Which of those correspond to a divisor?  He states that Kodaira and Weil have answered both questions in the case where $B$ is a compact complex algebraic variety that can be embedded in complex projective space.  The work which Cartan mentions appears in a paper of Kodaira submitted in December 1950 which establishes a Riemann--Roch theorem for complex analytic surfaces \cite{Kodaira1951RiemannRoch}.   In particular, this paper lays out explicitly what we now refer to as the divisor--line bundle correspondence, and attributes the results to Weil.

A tremendous shift begins to appear in the literature over the next few years: the reworking of Leray's sheaf theory by Cartan and his seminar is progressively adopted and deployed.  One trace of this adoption can be seen in the explorations of the divisor--line bundle correspondence laid out by Kodaira in joint work with Spencer announced in early 1953 \cite{KodairaSheaf, KodairaSpencer, KodairaSpencerII}.  These results recapitulate the ``harmonic'' perspective of Hodge and Weil, merging it with Weil's ``local-to-global'' philosophy, framing the discussion in terms of cohomology of sheaves.  

Around the same time, Cartan formulates what are now known as Theorems A and B, describing a theory of {\em coherent} sheaves on analytic manifolds.  Along the way, he interprets the additive Cousin problem for Stein manifolds in sheaf-theoretic terms as a consequence.  The definitive treatment appears in a conference proceedings volume from 1953 \cite{CartanBruxelles}, but more preliminary expositions appear in the Cartan seminar in the years 1951-52 (e.g., Cartan remarks in his collected works that cohomological formulation of ``Theorem B'' was not in the seminar expose).  Soon thereafter, in 1953-54, joint work of Cartan--Serre establishes a finiteness theorem for coherent analytic sheaves on compact complex analytic spaces.  

\subsection{Conclusion}
What we see in the development above is the interweaving of numerous streams of mathematics, all of which converge in Serre's Bourbaki expose: e.g., \cite[\S 2 Exemple 1]{SerreFSBBKI}.  Serre recalls the notions of divisors and linear equivalence, and carefully explains the link between divisors and fiber spaces with structure group the multiplicative group, observing that the two notions are classified by isomorphic groups.  Serre formulates explicit comparison questions between algebraic and analytic fiber bundles, including the one stated at the beginning of this section.  

That Serre's comparison question now appears natural relies on the placement of these results within a particular context.  The comparison problem was therefore not simply an internal development of algebraic geometry. It became mathematically meaningful only after a convergence of several previously independent traditions: the topological theory of fiber bundles, the analytic theory of Cousin data, Hodge's reinterpretation of cohomology classes, and Cartan's sheaf-theoretic local-to-global formalism. Serre's question emerged at the intersection of these developments.

Nevertheless, this problem of Serre's seems to have attracted considerably less attention than others he has posed.  One potential source for this discrepancy can perhaps be found in the notes on the Bourbaki expose in his collected works.  Here, Serre mentions \cite{Serre1986OeuvresI} that the comparison problem between algebraic and holomorphic fiber bundles has ``almost completely been resolved'', referencing his paper comparing sheaf cohomology on projective algebraic varieties, i.e., that which is now referred to as GAGA \cite{SerreGAGA}.  

Serre's landmark results in \cite{SerreGAGA} establish that, for $X$ projective over $\cplx$ and $G$ equal to $\mathbb{G}_m$, $\mathbb{G}_a$, or $GL_n$, the comparison map from algebraic $G$-bundles to analytic $G$-bundles is bijective.  While the statement that the comparison question has ``almost completely been resolved'' sounds rather definitive, Serre explicitly leaves open the case where \(V\) is a nonsingular complex affine variety. Griffiths' analysis \cite{Griffiths1972FunctionTheoryIA,Griffiths1972FunctionTheoryIB} suggests that this omission is more than a technical gap. The affine case, which escaped the scope of GAGA, reconnects the comparison problem with the Hodge-theoretic questions from which it first arose. The modern comparison problem thus returns, in a new guise, to its original conceptual origins. Understanding the comparison problem therefore requires understanding not only its solutions, but also the history that made those solutions worth seeking.





\begin{footnotesize}
\bibliographystyle{alpha}
\bibliography{SAGRI}

\newcommand{\etalchar}[1]{$^{#1}$}
\begin{thebibliography}{BCK{\etalchar{+}}66}

\bibitem[AB64]{AtiyahBott1964Bott}
M.F. Atiyah and R.~Bott.
\newblock On the periodicity theorem for complex vector bundles.
\newblock {\em Acta Mathematica}, 112:229--247, 1964.

\bibitem[AB23]{Arana2023-ARAMHO}
A.~Arana and H.~Burnett.
\newblock Mathematical hygiene.
\newblock {\em Synthese}, 202(110):1--28, 2023.

\bibitem[ABH23]{ABHFreudenthal}
A.~Asok, T.~Bachmann, and M.J. Hopkins.
\newblock On $\mathbb{P}^1$-stabilization in unstable motivic homotopy theory.
\newblock {\em {\em To appear} Ann. of Math.}, 2023.

\bibitem[ADF17]{ADF}
A.~Asok, B.~Doran, and J.~Fasel.
\newblock Smooth models of motivic spheres and the clutching construction.
\newblock {\em Int. Math. Res. Not. IMRN}, (6):1890--1925, 2017.

\bibitem[AE17]{AntieauElmanto}
B.~Antieau and E.~Elmanto.
\newblock A primer for unstable motivic homotopy theory.
\newblock In {\em Surveys on recent developments in algebraic geometry},
  volume~95 of {\em Proc. Sympos. Pure Math.}, pages 305--370. Amer. Math.
  Soc., Providence, RI, 2017.

\bibitem[AF59]{AndreottiFrankel1959}
A.~Andreotti and T.~Frankel.
\newblock The lefschetz theorem on hyperplane sections.
\newblock {\em Annals of Mathematics}, 69(3):713--717, 1959.

\bibitem[AF23]{AsokFasel2022VectorBundles}
A.~Asok and J.~Fasel.
\newblock Vector bundles on algebraic varieties.
\newblock In {\em Proceedings of the International Congress of Mathematicians
  2022}, volume~4, pages 2146--2170, Berlin, 2023. EMS Press.

\bibitem[AFH19]{AsokFaselHopkins2019Obstructions}
A.~Asok, J.~Fasel, and M.~J. Hopkins.
\newblock Obstructions to algebraizing topological vector bundles.
\newblock {\em Forum of Mathematics, Sigma}, 7:e6, 2019.

\bibitem[AFH22]{AFHLocalization}
A.~Asok, J.~Fasel, and M.~J. Hopkins.
\newblock Localization and nilpotent spaces in {$\mathbb A^1$}-homotopy theory.
\newblock {\em Compos. Math.}, 158(3):654--720, 2022.

\bibitem[AFH25]{AFHRees}
A.~Asok, J.~Fasel, and M.~J. Hopkins.
\newblock Algebraic vector bundles and {$p$}-local {$\Bbb A^1$}-homotopy
  theory.
\newblock {\em Ann. Sci. \'{E}c. Norm. Sup\'{e}r. (4)}, 58(5):1155--1178, 2025.

\bibitem[AH62]{AtiyahHirzebruch1962Analytic}
M.F. Atiyah and F.~Hirzebruch.
\newblock Analytic cycles on complex manifolds.
\newblock {\em Topology}, 1:25--45, 1962.

\bibitem[AHW17]{AHWI}
A.~Asok, M.~Hoyois, and M.~Wendt.
\newblock Affine representability results in {$\mathbb A^1$}-homotopy theory,
  {I}: vector bundles.
\newblock {\em Duke Math. J.}, 166(10):1923--1953, 2017.

\bibitem[AHW18]{AHWII}
A.~Asok, M.~Hoyois, and M.~Wendt.
\newblock Affine representability results in {$\mathbb A^1$}-homotopy theory,
  {II}: {P}rincipal bundles and homogeneous spaces.
\newblock {\em Geom. Topol.}, 22(2):1181--1225, 2018.

\bibitem[AHW19]{AHWOctonion}
A.~Asok, M.~Hoyois, and M.~Wendt.
\newblock Generically split octonion algebras and {$\mathbb A^1$}-homotopy
  theory.
\newblock {\em Algebra Number Theory}, 13(3):695--747, 2019.

\bibitem[AHW20]{AHWIII}
A.~Asok, M.~Hoyois, and M.~Wendt.
\newblock Affine representability results in {$\mathbb A^1$}-homotopy theory
  {III}: finite fields and complements.
\newblock {\em Algebr. Geom.}, 7(5):634--644, 2020.

\bibitem[AR76]{AtiyahRees1976VectorBundlesP3}
M.~F. Atiyah and E.~Rees.
\newblock Vector bundles on projective 3-space.
\newblock {\em Inventiones Mathematicae}, 35(2):131--153, 1976.

\bibitem[Aso22]{AQuadrics}
A.~Asok.
\newblock Affine representability of quadrics revisited.
\newblock {\em J. Algebra}, 608:37--51, 2022.

\bibitem[Aso24]{Asok}
A.~Asok.
\newblock Constructing projective modules, 2024.
\newblock \url{https://arxiv.org/abs/2412.05250}.

\bibitem[Aud09]{audin_feldbau_2009}
M.~Audin.
\newblock Publier sous l'occupation i. autour du cas de jacques feldbau et de
  l'acad{\'e}mie des sciences.
\newblock {\em Revue d'histoire des math{\'e}matiques}, 15(1):7--57, 2009.
\newblock [Publishing during German occupation of France I. The case of Jacques
  Feldbau and the Acad{\'e}mie des sciences].

\bibitem[AWW17]{AWW}
A.~Asok, K.~Wickelgren, and T.B. Williams.
\newblock The simplicial suspension sequence in {$\mathbb{A}^1$}-homotopy.
\newblock {\em Geom. Topol.}, 21(4):2093--2160, 2017.

\bibitem[Bau67]{Baum1967Quadratic}
P.F. Baum.
\newblock Quadratic maps and stable homotopy groups of spheres.
\newblock {\em Illinois Journal of Mathematics}, 11(4):586--595, 1967.

\bibitem[BC70]{BousfieldCurtis1970}
A.~K. Bousfield and E.~B. Curtis.
\newblock A spectral sequence for the homotopy of nice spaces.
\newblock {\em Transactions of the American Mathematical Society},
  151:457--479, 1970.

\bibitem[BCK{\etalchar{+}}66]{BousfieldEtAl1966}
A.~K. Bousfield, E.~B. Curtis, D.~M. Kan, D.~G. Quillen, D.~L. Rector, and
  J.~W. Schlesinger.
\newblock The mod-{$p$} lower central series and the adams spectral sequence.
\newblock {\em Topology}, 5(4):331--342, 1966.

\bibitem[BCM78]{BenderskyCurtisMiller1978}
M.~Bendersky, E.~B. Curtis, and H.~R. Miller.
\newblock The unstable adams spectral sequence for generalized homology.
\newblock {\em Topology}, 17(3):229--248, 1978.

\bibitem[Bec03]{Beck1967Triples}
J.M. Beck.
\newblock Triples, algebras and cohomology.
\newblock {\em Reprints in Theory and Applications of Categories}, (2):1--59,
  2003.
\newblock Originally published as Ph.D. thesis, Columbia University, 1967.

\bibitem[BEM25]{BachmannEngelmannMattis2025Monadic}
T.~Bachmann, A.~Engelmann, and K.~Mattis.
\newblock Monadic resolutions for generalized spaces.
\newblock {\em arXiv preprint}, 2025.

\bibitem[BK72]{BousfieldKan1972}
A.~K. Bousfield and D.~M. Kan.
\newblock The homotopy spectral sequence of a space with coefficients in a
  ring.
\newblock {\em Topology}, 11(1):79--106, 1972.

\bibitem[Blo86]{Bloch1986}
S.~Bloch.
\newblock Algebraic cycles and higher {$K$}-theory.
\newblock {\em Advances in Mathematics}, 61(3):267--304, 1986.

\bibitem[Blo94]{Bloch1994}
S.~Bloch.
\newblock The moving lemma for higher chow groups.
\newblock {\em Journal of Algebraic Geometry}, 3(3):537--568, 1994.

\bibitem[Bot59]{Bott1959Lefschetz}
R.~Bott.
\newblock On a theorem of {Lefschetz}.
\newblock {\em Michigan Mathematical Journal}, 6(3):211--216, 1959.

\bibitem[BP87]{BanicaPutinar1987VectorBundlesP3}
C.~B{\u{a}}nic{\u{a}} and M.~Putinar.
\newblock On complex vector bundles on projective threefolds.
\newblock {\em Inventiones Mathematicae}, 88:427--438, 1987.

\bibitem[Car50]{Cartan1949GeneralitesI}
H.~Cartan.
\newblock G{\'e}n{\'e}ralit{\'e}s sur les espaces fibr{\'e}s, i.
\newblock {\em S{\'e}minaire Henri Cartan}, 2:1--13, 1949--1950.
\newblock Expos{\'e} no.~6.

\bibitem[Car52]{Cartan1952Bourbaki34}
H.~Cartan.
\newblock Espaces fibr\'es analytiques complexes.
\newblock In {\em S\'eminaire Bourbaki : ann\'ees 1948/49--1949/50--1950/51,
  expos\'es 1--49}, number~1 in S\'eminaire Bourbaki, pages 281--290.
  Soci\'et\'e Math\'ematique de France, 1952.
\newblock Expos\'e no.~34, d\'ecembre 1950.

\bibitem[Car53]{CartanBruxelles}
H.~Cartan.
\newblock Vari\'{e}t\'{e}s analytiques complexes et cohomologie.
\newblock In {\em Colloque sur les fonctions de plusieurs variables, tenu \`a
  {B}ruxelles, 1953}, pages 41--55. Georges Thone, Li\`ege, 1953.

\bibitem[CG75]{CornalbaGriffiths1975Analytic}
M.~Cornalba and P.~Griffiths.
\newblock Analytic cycles and vector bundles on non-compact algebraic
  varieties.
\newblock {\em Inventiones Mathematicae}, 28(1):1--106, 1975.

\bibitem[Che46]{Chern1946}
S.-S. Chern.
\newblock Characteristic classes of hermitian manifolds.
\newblock {\em Annals of Mathematics}, 47(1):85--121, 1946.

\bibitem[CW11]{CartanWeil}
H.~Cartan and A.~Weil.
\newblock {\em Correspondance entre {H}enri {C}artan et {A}ndr\'{e} {W}eil
  (1928--1991)}, volume~6 of {\em Documents Math\'{e}matiques (Paris)
  [Mathematical Documents (Paris)]}.
\newblock Soci\'{e}t\'{e} Math\'{e}matique de France, Paris, 2011.
\newblock Edited by Mich\`ele Audin.

\bibitem[Des54]{descartes1954geometry}
R.~Descartes.
\newblock {\em The Geometry of Ren{\'e} Descartes: With a Facsimile of the
  First Edition}.
\newblock Dover Publications, Mineola, NY, 1954.
\newblock Facsimile edition with English translation from the French and Latin.

\bibitem[DF96]{Farjoun}
E.~Dror-Farjoun.
\newblock {\em Cellular spaces, null spaces and homotopy localization}, volume
  1622 of {\em Lecture Notes in Mathematics}.
\newblock Springer-Verlag, Berlin, 1996.

\bibitem[Die85]{Dieudonne1985HistoryAG}
J.~Dieudonn{\'e}.
\newblock {\em History Algebraic Geometry}.
\newblock Chapman \& Hall/CRC, New York, 1st edition, 1985.

\bibitem[Dir29]{dirichlet1829convergence}
P.~G.~L. Dirichlet.
\newblock Sur la convergence des séries trigonométriques qui servent à
  représenter une fonction arbitraire entre des limites données.
\newblock {\em Journal für die reine und angewandte Mathematik}, 4:157--169,
  1829.
\newblock In French.

\bibitem[DT58]{DoldThom1958}
A.~Dold and R.~Thom.
\newblock Quasifaserungen und unendliche symmetrische produkte.
\newblock {\em Annals of Mathematics}, 67(2):239--281, 1958.

\bibitem[EF41]{ehresmann_feldbau_homotopie_1941}
C.~Ehresmann and J.~Feldbau.
\newblock Sur les propri{\'e}t{\'e}s d'homotopie des espaces fibr{\'e}s.
\newblock {\em Comptes Rendus de l'Acad{\'e}mie des Sciences de Paris},
  212:945--948, 1941.

\bibitem[Ehr41]{ehresmann_espaces_associes_1941}
C.~Ehresmann.
\newblock Espaces fibr{\'e}s associ{\'e}s.
\newblock {\em Comptes Rendus de l'Acad{\'e}mie des Sciences de Paris},
  213:762--764, 1941.

\bibitem[Fel39]{feldbau_classification_1939}
J.~Feldbau.
\newblock Sur la classification des espaces fibr{\'e}s.
\newblock {\em Comptes Rendus de l'Acad{\'e}mie des Sciences de Paris},
  208:1621--1623, 1939.

\bibitem[God58]{Godement1958Topologie}
R.~Godement.
\newblock {\em Topologie alg\'ebrique et th\'eorie des faisceaux}, volume 1252
  of {\em Actualit\'es Scientifiques et Industrielles}.
\newblock Hermann, Paris, 1958.

\bibitem[Gra57]{Grauert1957Holomorphe}
H.~Grauert.
\newblock Holomorphe funktionen mit werten in komplexen lieschen gruppen.
\newblock {\em Math. Ann.}, 133:450--472, 1957.

\bibitem[Gra58]{Grauert1958Analytische}
H.~Grauert.
\newblock Analytische faserungen \"uber holomorph-vollst\"andigen r\"aumen.
\newblock {\em Mathematische Annalen}, 135:263--273, 1958.

\bibitem[Gri72a]{Griffiths1972FunctionTheoryIA}
P.A. Griffiths.
\newblock Function {T}heory of {F}inite {O}rder on {A}lgebraic {V}arieties.
  i(a).
\newblock {\em Journal of Differential Geometry}, 6:285--306, 1972.

\bibitem[Gri72b]{Griffiths1972FunctionTheoryIB}
P.A. Griffiths.
\newblock Function theory of finite order on algebraic varieties. i(b).
\newblock {\em Journal of Differential Geometry}, 7(1--2):45--66, 1972.

\bibitem[Gro58]{Grothendieck1958Chern}
A.~Grothendieck.
\newblock La th\'eorie des classes de {C}hern.
\newblock {\em Bulletin de la Soci\'et\'e Math\'ematique de France},
  86:137--154, 1958.

\bibitem[Gro60]{Grothendieck1960DescenteI}
A.~Grothendieck.
\newblock Technique de descente et th{\'e}or{\`e}mes d'existence en
  g{\'e}om{\'e}trie alg{\'e}brique. i. g{\'e}n{\'e}ralit{\'e}s. descente par
  morphismes fid{\`e}lement plats.
\newblock In {\em S{\'e}minaire Bourbaki : ann{\'e}es 1958/59--1959/60,
  Expos{\'e}s 169--204}, volume~5 of {\em S{\'e}minaire Bourbaki}, pages
  299--327. Soci{\'e}t{\'e} Math{\'e}matique de France, 1960.
\newblock Expos{\'e} no. 190.

\bibitem[Gro69]{Grothendieck1969Hodge}
A.~Grothendieck.
\newblock Hodge's general conjecture is false for trivial reasons.
\newblock {\em Topology}, 8:299--303, 1969.

\bibitem[Gui11]{Guicciardini2011NewtonCertaintyMethod}
N.~Guicciardini.
\newblock {\em Isaac Newton on Mathematical Certainty and Method}.
\newblock MIT Press, Cambridge, MA, 2011.
\newblock Paperback edition; originally published 2009.

\bibitem[Hil02]{Hilbert1902MathematicalProblems}
D.~Hilbert.
\newblock Mathematical problems.
\newblock {\em Bulletin of the American Mathematical Society}, 8(10):437--479,
  1902.

\bibitem[Hod52]{Hodge1952ICM}
W.~V.~D. Hodge.
\newblock The topological invariants of algebraic varieties.
\newblock In {\em Proceedings of the International Congress of Mathematicians,
  Cambridge, Massachusetts, August 30--September 6, 1950}, volume~1, pages
  182--192, Providence, RI, 1952. American Mathematical Society.

\bibitem[Hoy17]{Hoyois2017SixOperations}
M.~Hoyois.
\newblock The six operations in equivariant motivic homotopy theory.
\newblock {\em Adv. Math.}, 305:197--279, 2017.

\bibitem[KM82]{KumarMurthy1982AlgebraicCyclesAffineThreefolds}
N.~Mohan Kumar and M.~Pavaman Murthy.
\newblock Algebraic cycles and vector bundles over affine three-folds.
\newblock {\em Annals of Mathematics}, 116(3):579--591, November 1982.

\bibitem[Kod49]{Kodaira1949HarmonicFields}
K.~Kodaira.
\newblock Harmonic fields in riemannian manifolds (generalized potential
  theory).
\newblock {\em Annals of Mathematics}, 50(3):587--665, 1949.

\bibitem[Kod51]{Kodaira1951RiemannRoch}
K.~Kodaira.
\newblock The theorem of riemann--roch on compact analytic surfaces.
\newblock {\em American Journal of Mathematics}, 73(4):813--875, 1951.

\bibitem[Kod53]{KodairaSheaf}
K.~Kodaira.
\newblock On cohomology groups of compact analytic varieties with coefficients
  in some analytic faisceaux.
\newblock {\em Proc. Nat. Acad. Sci. U.S.A.}, 39:865--868, 1953.

\bibitem[KS53a]{KodairaSpencerII}
K.~Kodaira and D.~C. Spencer.
\newblock Divisor class groups on algebraic varieties.
\newblock {\em Proc. Nat. Acad. Sci. U.S.A.}, 39:872--877, 1953.

\bibitem[KS53b]{KodairaSpencer}
K.~Kodaira and D.~C. Spencer.
\newblock Groups of complex line bundles over compact {K}\"{a}hler varieties.
\newblock {\em Proc. Nat. Acad. Sci. U.S.A.}, 39:868--872, 1953.

\bibitem[Lur09]{LurieHTT}
J.~Lurie.
\newblock {\em Higher Topos Theory}, volume 170 of {\em Annals of Mathematics
  Studies}.
\newblock Princeton University Press, Princeton, NJ, 2009.

\bibitem[Lur17]{LurieHA}
J.~Lurie.
\newblock {\em Higher Algebra}.
\newblock 2017.
\newblock Available at \url{https://www.math.ias.edu/~lurie/papers/HA.pdf}.

\bibitem[Man68]{Manin1968}
Yu.~I. Manin.
\newblock Correspondences, motifs and monoidal transformations.
\newblock {\em Mathematics of the USSR-Sbornik}, 6(4):439--470, 1968.

\bibitem[Mat24a]{Mattis2024ArithmeticFracture}
K.~Mattis.
\newblock Unstable arithmetic fracture squares in $\infty$-topoi.
\newblock {\em arXiv preprint}, 2024.

\bibitem[Mat24b]{Mattis2024UnstablePCompletion}
K.~Mattis.
\newblock Unstable $p$-completion in motivic homotopy theory.
\newblock {\em arXiv preprint}, 2024.

\bibitem[Med91]{medvedev1991scenes}
F.~A. Medvedev.
\newblock {\em Scenes from the History of Real Functions}, volume~7 of {\em
  Science Networks. Historical Studies}.
\newblock Birkh{\"a}user Basel, Basel; Boston; Berlin, 1991.

\bibitem[Meh81]{Mehrtens}
H.~Mehrtens.
\newblock Socia1 history of mathematics.
\newblock In {\em Social history of nineteenth century mathematics. ({Papers}
  from a {Workshop}, {Technical} {University} {Berlin}, {July} 5-8, 1979)},
  pages pp. 257--280. 1981.

\bibitem[Mil63]{Milnor1963}
J.~Milnor.
\newblock {\em Morse Theory}, volume~51 of {\em Annals of Mathematics Studies}.
\newblock Princeton University Press, Princeton, NJ, 1963.

\bibitem[Mil12]{MilneMotivesOnline}
J.S. Milne.
\newblock Motives: Grothendieck's dream.
\newblock \url{https://www.jmilne.org/math/xnotes/mot.html}, 2012.
\newblock Version 2.04, April 24, 2012. Accessed July 20, 2026.

\bibitem[Mon72]{monna1972function}
A.F. Monna.
\newblock The concept of function in the 19th and 20th centuries, in particular
  with regard to the discussions between baire, borel and lebesgue.
\newblock {\em Archive for History of Exact Sciences}, 9(1):57--84, 1972.

\bibitem[Mon83]{monna1983algebraic}
A.~F. Monna.
\newblock Algebraic and set-theoretic aspects of the evolution of mathematics.
\newblock {\em Indagationes Mathematicae (Proceedings)}, 86(3):329--341, 1983.
\newblock Proceedings A, September 26, 1983.

\bibitem[Mor06]{MICM}
F.~Morel.
\newblock {$\Bbb A^1$}-algebraic topology.
\newblock In {\em International {C}ongress of {M}athematicians. {V}ol. {II}},
  pages 1035--1059. Eur. Math. Soc., Z\"{u}rich, 2006.

\bibitem[Mor12]{MField}
F.~Morel.
\newblock {\em {$\mathbb A^1$}-algebraic topology over a field}, volume 2052 of
  {\em Lecture Notes in Mathematics}.
\newblock Springer, Heidelberg, 2012.

\bibitem[MS74]{MilnorStasheff1974}
J.~Milnor and J.D. Stasheff.
\newblock {\em Characteristic Classes}, volume~76 of {\em Annals of Mathematics
  Studies}.
\newblock Princeton University Press, Princeton, NJ, 1974.

\bibitem[MS76]{MurthySwan1976VectorBundlesAffineSurfaces}
M.~Pavaman Murthy and R.~G. Swan.
\newblock Vector bundles over affine surfaces.
\newblock {\em Inventiones Mathematicae}, 36:125--165, December 1976.

\bibitem[Mum99]{Mumford1999RedBook}
D.~Mumford.
\newblock {\em The Red Book of Varieties and Schemes: Includes the Michigan
  Lectures (1974) on Curves and Their Jacobians}, volume 1358 of {\em Lecture
  Notes in Mathematics}.
\newblock Springer-Verlag, Berlin, 2 edition, 1999.

\bibitem[MV99]{MV}
F.~Morel and V.~Voevodsky.
\newblock $\mathbb{A}^1$-homotopy theory of schemes.
\newblock {\em Publications Mathématiques de l'Institut des Hautes Études
  Scientifiques}, 90(1):45--143, 1999.

\bibitem[MVW06]{MVW}
C.~Mazza, V.~Voevodsky, and C.~Weibel.
\newblock {\em Lecture notes on motivic cohomology}, volume~2 of {\em Clay
  Mathematics Monographs}.
\newblock American Mathematical Society, Providence, RI; Clay Mathematics
  Institute, Cambridge, MA, 2006.

\bibitem[Nas52]{Nash1952Algebraic}
J.F. Nash.
\newblock Algebraic approximations of manifolds.
\newblock In {\em Proceedings of the International Congress of Mathematicians,
  Cambridge, Massachusetts, August 30--September 6, 1950}, volume~1, pages
  516--517, Providence, RI, 1952. American Mathematical Society.

\bibitem[OSS11]{OSS}
C.~Okonek, M.~Schneider, and H.~Spindler.
\newblock {\em Vector bundles on complex projective spaces}.
\newblock Modern Birkh\"{a}user Classics. Birkh\"{a}user/Springer Basel AG,
  Basel, 1988 edition, 2011.
\newblock With an appendix by S. I. Gelfand.

\bibitem[Poi04]{PoinareDefinitions}
H.~Poincar{\'e}.
\newblock Les d{\'e}finitions g{\'e}n{\'e}rales en math{\'e}matiques.
\newblock {\em Enseign. Math.}, 6:257--283, 1904.

\bibitem[Poi10]{PoincareTrans}
H.~Poincar\'{e}.
\newblock {\em Papers on topology}, volume~37 of {\em History of Mathematics}.
\newblock American Mathematical Society, Providence, RI; London Mathematical
  Society, London, 2010.
\newblock {\em Analysis situs} and its five supplements, Translated and with an
  introduction by John Stillwell.

\bibitem[Ram65]{Ramspeott1965Schnitte}
K.J. Ramspott.
\newblock Stetige und holomorphe schnitte in b{\"u}ndeln mit homogener faser.
\newblock {\em Mathematische Zeitschrift}, 89:234--246, 1965.

\bibitem[Rec66]{Rector1966}
D.L. Rector.
\newblock An unstable adams spectral sequence.
\newblock {\em Topology}, 5(4):343--346, 1966.

\bibitem[Rem56]{Remmert1956CRAS}
R.~Remmert.
\newblock Sur les espaces analytiques holomorphiquement s{\'e}parables et
  holomorphiquement convexes.
\newblock {\em Comptes Rendus Hebdomadaires des S{\'e}ances de l'Acad{\'e}mie
  des Sciences}, 243:118--121, 1956.

\bibitem[Rie04]{Riemann}
B.~Riemann.
\newblock {\em Collected papers}.
\newblock Kendrick Press, Heber City, UT, 2004.
\newblock Translated from the 1892 German edition by Roger Baker, Charles
  Christenson and Henry Orde.

\bibitem[Roq18]{roquette2018riemann}
P.~Roquette.
\newblock {\em The Riemann Hypothesis in Characteristic $p$ in Historical
  Perspective}, volume 2222 of {\em Lecture Notes in Mathematics}.
\newblock Springer, Cham, Switzerland, 2018.
\newblock History of Mathematics Subseries; bibliography and index included.

\bibitem[Ros96]{Rost1996ChowGroupsCoefficients}
M.~Rost.
\newblock Chow groups with coefficients.
\newblock {\em Documenta Mathematica}, 1:319--393, 1996.

\bibitem[Sch61a]{Schwarzenberger1961VectorBundles}
R.~L.~E. Schwarzenberger.
\newblock Vector bundles on algebraic surfaces.
\newblock {\em Proceedings of the London Mathematical Society}, 11(1):601--622,
  1961.

\bibitem[Sch61b]{Schwarzenberger1961VectorBundlesP2}
R.~L.~E. Schwarzenberger.
\newblock Vector bundles on the projective plane.
\newblock {\em Proceedings of the London Mathematical Society}, 11(1):623--640,
  1961.

\bibitem[Sch99]{Scholz}
E.~Scholz.
\newblock The concept of manifold, 1850--1950.
\newblock In {\em History of topology}, pages 25--64. North-Holland, Amsterdam,
  1999.

\bibitem[Ser54]{SerreFSBBKI}
J.-P. Serre.
\newblock Espaces fibr\'es alg\'ebriques.
\newblock In {\em S\'eminaire Bourbaki : ann\'ees 1951/52 - 1952/53 - 1953/54,
  expos\'es 50-100}, number~2 in S\'eminaire Bourbaki, pages 305--311.
  Soci\'et\'e math\'ematique de France, 1954.
\newblock talk:82.

\bibitem[Ser55]{SerreFAC}
J.-P. Serre.
\newblock Faisceaux alg\'{e}briques coh\'{e}rents.
\newblock {\em Ann. of Math. (2)}, 61:197--278, 1955.

\bibitem[Ser58]{SerreSplitting}
J.-P. Serre.
\newblock Modules projectifs et espaces fibr\'{e}s \`a fibre vectorielle.
\newblock In {\em S\'{e}minaire {P}. {D}ubreil, {M}.-{L}. {D}ubreil-{J}acotin
  et {C}. {P}isot, 1957/58, {F}asc. 2}, pages Expos\'{e} 23, 18.
  Secr\'{e}tariat math\'{e}matique, 11 rue Pierre Curie, Paris, 1958.

\bibitem[Ser86]{Serre1986OeuvresI}
J.-P. Serre.
\newblock {\em {\OE}uvres / Collected Papers. Volume I: 1949--1959}.
\newblock Springer Collected Works in Mathematics. Springer, Berlin,
  Heidelberg, 1986.

\bibitem[Ser91]{Serre1991PetitsCousins}
J.-P. Serre.
\newblock Les petits cousins.
\newblock In Peter Hilton, Friedrich Hirzebruch, and Reinhold Remmert, editors,
  {\em Miscellanea Mathematica}, pages 277--291. Springer-Verlag, Berlin, 1991.

\bibitem[Ser56]{SerreGAGA}
J.-P. Serre.
\newblock G\'{e}om\'{e}trie alg\'{e}brique et g\'{e}om\'{e}trie analytique.
\newblock {\em Ann. Inst. Fourier (Grenoble)}, 6:1--42, 1955/56.

\bibitem[Ste51]{Stein1951}
K.~Stein.
\newblock Analytische funktionen mehrerer komplexer veränderlichen zu
  vorgegebenen periodizitätsmoduln und das zweite cousinsche problem.
\newblock {\em Mathematische Annalen}, 123(1):201--222, 1951.

\bibitem[Ste99]{SteenrodFB}
N.~Steenrod.
\newblock {\em The topology of fibre bundles}.
\newblock Princeton Landmarks in Mathematics. Princeton University Press,
  Princeton, NJ, 1999.
\newblock Reprint of the 1957 edition, Princeton Paperbacks.

\bibitem[Sul04]{Sullivan2004GeometricTopology}
D.~Sullivan.
\newblock {\em Geometric Topology, Part I: Localization, Periodicity, and
  Galois Symmetry}.
\newblock K-Monographs in Mathematics. Kluwer Academic Publishers, Dordrecht,
  2004.

\bibitem[Sus00]{Suslin2011HigherChow}
A.A. Suslin.
\newblock Higher chow groups and etale cohomology.
\newblock In Eric~M. Friedlander, Andrei Suslin, and Vladimir Voevodsky,
  editors, {\em Cycles, Transfers, and Motivic Homology Theories}, volume 143
  of {\em Annals of Mathematics Studies}, pages 239--254. Princeton University
  Press, Princeton, NJ, 2000.

\bibitem[SV96]{SuslinVoevodsky1996}
A.~Suslin and V.~Voevodsky.
\newblock Singular homology of abstract algebraic varieties.
\newblock {\em Inventiones Mathematicae}, 123(1):61--94, 1996.

\bibitem[Voe98]{VICM}
V.~Voevodsky.
\newblock {${\mathbf A}^1$}-homotopy theory.
\newblock In {\em Proceedings of the {I}nternational {C}ongress of
  {M}athematicians, {V}ol. {I} ({B}erlin, 1998)}, number Extra Vol. I, pages
  579--604, 1998.

\bibitem[Voe02]{Voevodsky2002HigherChow}
V.~Voevodsky.
\newblock Motivic cohomology groups are isomorphic to higher chow groups in any
  characteristic.
\newblock {\em International Mathematics Research Notices}, 2002(7):351--355,
  2002.

\bibitem[Voe10]{VMEM}
V.~Voevodsky.
\newblock Motivic {E}ilenberg-{M}aclane spaces.
\newblock {\em Publ. Math. Inst. Hautes \'{E}tudes Sci.}, (112):1--99, 2010.

\bibitem[Wal35]{Walker1935ReductionSingularities}
R.~J. Walker.
\newblock Reduction of the singularities of an algebraic surface.
\newblock {\em Annals of Mathematics}, 36(2):336--365, April 1935.

\bibitem[Wei85]{Weierstrass1885AnalytischeDarstellbarkeit}
K.~Weierstrass.
\newblock {\"U}ber die analytische darstellbarkeit sogenannter
  willk{\"u}rlicher functionen einer reellen ver{\"a}nderlichen.
\newblock {\em Sitzungsberichte der K{\"o}niglich Preu{\ss}ischen Akademie der
  Wissenschaften zu Berlin}, pages 633--639, 1885.
\newblock Part I.

\bibitem[Wei95]{Weierstrass1895Continuous}
K.~Weierstrass.
\newblock {\"U}ber continuirliche functionen eines reellen arguments, die
  f{\"u}r keinen werth des letzteren einen bestimmten differentialquotienten
  besitzen.
\newblock In {\em Mathematische Werke}, volume~2, pages 71--74. Mayer \&
  M{\"u}ller, Berlin, 1895.

\bibitem[Wei48]{Weil1948}
A.~Weil.
\newblock {\em Vari{\'e}t{\'e}s ab{\'e}liennes et courbes alg{\'e}briques},
  volume 1064 of {\em Actualit{\'e}s Scientifiques et Industrielles}.
\newblock Hermann, Paris, 1948.
\newblock Publications de l'Institut de Math{\'e}matique de l'Universit{\'e} de
  Strasbourg, VIII.

\bibitem[Wei55]{WeilFBCourse}
A.~Weil.
\newblock Fibre spaces in algebraic geometry.
\newblock Notes by {A}. {Wallace}. {Department} of {Mathematics}, {University}
  of {Chicago} 48 p. (1955)., 1955.

\bibitem[Wei56]{Weil1956Abstract}
A.~Weil.
\newblock Abstract versus classical algebraic geometry.
\newblock In {\em Proceedings of the International Congress of Mathematicians,
  1954}, volume~3, pages 550--558, Amsterdam, 1956. North-Holland.

\bibitem[Wei62]{WeilFoundations}
A.~Weil.
\newblock {\em Foundations of algebraic geometry}.
\newblock American Mathematical Society, Providence, RI, 1962.

\bibitem[Wei79]{WeilI}
A.~Weil.
\newblock {\em Scientific works. {C}ollected papers. {V}ol. {I} (1926--1951)}.
\newblock Springer-Verlag, New York-Heidelberg, 1979.

\bibitem[Wei89]{Weibel1989}
C.A. Weibel.
\newblock Homotopy algebraic {$K$}-theory.
\newblock {\em Contemporary Mathematics}, 83:461--488, 1989.

\bibitem[Wei14]{WeilII}
A.~Weil.
\newblock {\em Oeuvres scientifiques/collected papers. {II}. 1951--1964}.
\newblock Springer Collected Works in Mathematics. Springer, Heidelberg, 2014.
\newblock Reprint of the 2009 [ MR2883739] and 1979 [ MR0537935] editions.

\bibitem[Whi35]{Whitneyspherespace}
H.~Whitney.
\newblock Sphere-spaces.
\newblock Proc. {Acad}. {USA} 21, 464-468 (1935)., 1935.

\bibitem[Whi41]{WhitneyFB}
H.~Whitney.
\newblock On the topology of differentiable manifolds.
\newblock In {\em Lectures in {T}opology}, pages 101--141. Univ. Michigan
  Press, Ann Arbor, MI, 1941.

\bibitem[Woo66]{Wood1966BanachBott}
R.~M.~W. Wood.
\newblock Banach algebras and bott periodicity.
\newblock {\em Topology}, 4:371--389, 1966.

\bibitem[Woo68]{Wood1968Polynomial}
R.~M.~W. Wood.
\newblock Polynomial maps from spheres to spheres.
\newblock {\em Inventiones Mathematicae}, 5:163--168, 1968.

\bibitem[Woo93]{Wood1993Quadrics}
R.~M.~W. Wood.
\newblock Polynomial maps of affine quadrics.
\newblock {\em Bulletin of the London Mathematical Society}, 25(5):491--497,
  1993.

\bibitem[WW20]{WickelgrenWilliamsSurvey}
K.~Wickelgren and B.~Williams.
\newblock Unstable motivic homotopy theory.
\newblock In {\em Handbook of homotopy theory}, CRC Press/Chapman Hall Handb.
  Math. Ser., pages 931--972. CRC Press, Boca Raton, FL, [2020] \copyright
  2020.

\bibitem[Zar39]{Zariski1939ReductionSingularities}
O.~Zariski.
\newblock The reduction of the singularities of an algebraic surface.
\newblock {\em Annals of Mathematics}, 40(3):639--689, July 1939.

\bibitem[Zar42]{Zariski1942SimplifiedResolution}
O.~Zariski.
\newblock A simplified proof for the resolution of singularities of an
  algebraic surface.
\newblock {\em Annals of Mathematics}, 43(3):583--593, July 1942.

\end{thebibliography}
\end{footnotesize}
\Addresses
\end{document}